\documentclass[a4paper,10pt]{amsart}
\usepackage[T1]{fontenc}
\usepackage[utf8]{inputenc}
\usepackage{amsmath,amsthm,amsfonts,amssymb,latexsym}
\usepackage{amsmath}
\usepackage{mathrsfs}


\def\H2{{\mathbb{H}^{2}}}
\def\bbs{{S^{\ast}}}
\def\bbS{{S}}
\def\C{{\mathbb{C}}}
\setcounter{MaxMatrixCols} {06}
\theoremstyle{definition}
\theoremstyle{remark}
\numberwithin{equation}{section}

\newtheorem{theorem}{\textbf{Theorem}}[section]
\newtheorem{corollary}[theorem]{\textbf{Corollary}}
\newtheorem{lemma}[theorem]{\textbf{Lemma}}

\newtheorem{proposition}[theorem]{\textbf{Proposition}}

\newtheorem{definition}{Definition}[section]
\newtheorem{remark}{\textbf{Remark}}[section]

\parindent=0pc%
\begin{document}
\title{A sharpened Schwarz-Pick operatorial inequality for nilpotent operators}
\author{Haykel GAAYA}
\address{\ddag.Institute Camille Jordan, Office 107 University of Lyon1,
43 Bd November 11, 1918, 69622-Villeurbanne, France.}
\email{\ddag  gaaya@math.univ-lyon1.fr }
\subjclass[2000]{47A12, 47B35}
\keywords{operator theory, numerical radius, numerical range, eigenvalues, von Neumann inequalities, 
compression shift, Toeplitz matrices, unitary dilation, $\rho$-dilations, Poncelet property.}
\maketitle
\begin{abstract}
Let denote by $S(\phi)$ the extremal operator defined by the compression of the unilateral shift $\bbS$ to the model subspace 
 $ H(\phi)=\H2\ominus\phi~\H2 $ as the following
$\bbS(\phi)f(z)=P(zf(z)),$ where $P$ denotes the orthogonal projection from $\H2$ onto $ H(\phi)$ and $\phi$ is an inner function 
on the unit disc. In this mathematical notes, we give an explicit formula of the numerical radius of $\bbS(\phi)$ in the particular case
 where $\phi$ is a finite Blaschke product with unique zero  and an estimate on the general case. We establish also a sharpened 
Schwarz-Pick operatorial inequality generalizing a U. Haagerup and P. de la Harpe result for nilpotent operators \cite{Haagerup}.
\end{abstract}
\section{Introduction}
\hspace{0.5cm}Let $\mathcal{H}$ be a complex separable Hilbert space and $\mathscr{B}(\mathcal{H})$ the collection of all bounded 
linear operators on $\mathcal{H}$. The numerical range of an operators $T$ in $\mathscr{B}(\mathcal{H})$ is the subset 
$$W(T)=\left\lbrace <Tx,x>\in\C;x\in \mathcal{H} ,\lVert x \lVert= 1\right\rbrace $$  of the plane, where $<.,.>$ denotes 
the inner product in $\mathcal{H}$ and the numerical radius of $T$ is defined by 
$$ \omega_{2}(T)=\sup \left\lbrace \lvert z\lvert; z\in W(T) \right\rbrace .$$
$\mathscr{R}e(T)$ is the self-adjoint operator defined by $$\mathscr{R}e(T)=\frac{1}{2}(T+T^{\ast}).$$
 We denote by $\bbS$ the unilateral shift acting on the Hardy space $\H2$ of the square summable analytic functions and 
by $\bbs$ its adjoint:
$$\begin{array}{ccccc}
\bbS & : & \H2 & \to & \H2 \\
& & f & \mapsto & zf(z) \\
\end{array}$$
$$\begin{array}{ccccc}
\bbs & : & \H2 & \to & \H2 \\
& & f & \mapsto & \dfrac{f(z)-f(0)}{z} \\
\end{array}.$$

\hspace{0.5cm}Beurling's theorem implies that the non zero invariant subspaces of $\bbS$ are of the forme $\phi~\H2$, 
where $\phi$ is some inner function . Let $\bbS(\phi)$ denote the compression of $\bbS$ to the model subspace
 $ H(\phi)=\H2\ominus\phi~\H2 $ defined by:
$$\bbS(\phi)f(z)=P(zf(z)),$$ where $P$ denotes the orthogonal projection from $\H2$ onto $ H(\phi)$.
 We denote by $\bbs(\phi)$ the adjoint of $\bbS(\phi)$: 
$$\bbs(\phi)=\bbS(\phi)^{\ast}={\bbs}_{\lvert H(\phi)}=\bbs_{\lvert Ker(\phi(\bbS)^{\ast})} ~.$$ 

\hspace{0.5cm}The model operator $\bbS(\phi)$ has many properties (See \cite{Nikolski} and \cite{Bercovici}) and it was studied intensively in the 1960s and '70s. 
For example, it has norm 1 (for dim  $H(\phi)>1$) and it is cyclic. The function $\phi$ is the minimal function 
of $\bbS(\phi)$ meaning that $\phi(\bbS(\phi))=0$ and $\phi$ divides any function $\psi$ in $H^{\infty}$ with $\psi(\bbS(\phi))=0$. 
The space $H(\phi)$ is  finite-dimensional exactly when $\phi$ is a finite Blaschke product: 
$$\phi(z)=\prod_{j=1}^{n}\dfrac{z-\alpha_{j}}{1-\overline{\alpha_{j}}z}.$$
\hspace{0.5cm}In this case the polynomial $p(z)=\prod_{j=1}^{n}(z-\alpha_{j})$ is both the minimal and characteristic polynomial 
of $\bbS(\phi)$ and $(\alpha_{j})_{1\leqslant j\leqslant n}$ are its eigenvalues. In particular, if $\phi(z)=z^{n}$ then
 $\bbS(\phi)$ is unitarily equivalent to $\bbS_{n}$ where 

$$
\bbS_{n}=\left(
\begin{array}{cccc}
0 &       &       &  \\
1 & \ddots &      &   \\
  & \ddots &\ddots    &  \\
  &        & 1    & 0 
  
\end{array}
\right)
.$$

\hspace{0.5cm}Note that if $\phi$ is a finite Blaschke product, $\bbS(\phi)$ is in a special class of operator introduced in \cite{Wu2} 
by H. -L. Gau and P. Y. Wu that we will denote by $\Upsilon_{n}$ and which consists
of all completely nonunitary contractions $T$ on a $n$-dimensional space ($\|T\|\leq1$ and $T$ has non eigenvalue of modulus 1) 
with a rank $(I-T^{\ast}T)=1$. They also proved in \cite{Wu2}  with B. Mirman \cite{Mirman1} separately that if $T$ is in $\Upsilon_{n}$ then $\partial W (T)$ satisfies 
the so-called $n+1$-Poncelet property. Recall that for $n\geq3$ we say that a curve $\varGamma$ satisfies the $n$-Poncelet property
if for every $\lambda$ on the unit circle there is an $n$-gone which circumscribes about $\varGamma$, inscribes in the unit circle 
and has $\lambda$ as a vertex.

\begin{theorem}[\cite{Wu1} Theorem 5.1 or \cite{Wu2} Theorem 2.1 ]\label{ter1}
\textit{ For any matrix $T$ in $\Upsilon_{n}$ and any point $\lambda$ with $|\lambda|=1$; there is a
unique $(n + 1)$-gon which circumscribes about $\partial W (T)$; inscribes in the unit circle and has $\lambda$
as a vertex. In fact; such $(n + 1)$-gons $P$ are in one-to-one correspondence
with the (unitary-equivalence classes of) unitary dilations $U$ of $A$ on an $(n + 1)$-
dimensional space; under which the $n+ 1$ vertices of P are exactly the eigenvalues
of the corresponding U.}
\end{theorem}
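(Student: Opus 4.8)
\textit{Sketch of the intended argument.}
The plan is to realise every $T\in\Upsilon_{n}$ concretely, to parametrise its $(n+1)$-dimensional unitary dilations by a single point of the circle, and then to read off both the ``one prescribed vertex determines one polygon'' bijection and the circumscription property from two classical tools: the Schur determinant formula and the Cauchy interlacing theorem. First I would reduce to the model. Since $T$ is a completely nonunitary contraction on an $n$-dimensional space with $\operatorname{rank}(I-T^{\ast}T)=1$, its characteristic function is a Blaschke product $\phi$ of degree $n$, so by the Sz.-Nagy--Foias model (\cite{Nikolski},\cite{Bercovici}) $T$ is unitarily equivalent to $\bbS(\phi)$ on $\mathcal H=H(\phi)$; in particular the defect subspaces $\operatorname{ran}(I-T^{\ast}T)$ and $\operatorname{ran}(I-TT^{\ast})$ are one-dimensional, spanned by unit vectors $e$ and $f$. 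A unitary dilation of $T$ to $\mathcal K=\mathcal H\oplus\mathbb C$ then has the block form $U_{\gamma}=\bigl(\begin{smallmatrix}T & a\\ b^{\ast}& c\end{smallmatrix}\bigr)$, and solving the unitarity equations $U_{\gamma}^{\ast}U_{\gamma}=U_{\gamma}U_{\gamma}^{\ast}=I$ pins $b$ to a multiple of $e$, $a$ to a multiple of $f$ and $|c|$ to a fixed value, leaving exactly one free unimodular parameter $\gamma\in\mathbb T$. This yields the one-parameter family $\{U_{\gamma}:\gamma\in\mathbb T\}$ of all such dilations. Because $T=P_{\mathcal H}U_{\gamma}\big|_{\mathcal H}$ we have $W(T)\subseteq W(U_{\gamma})$, and as $U_{\gamma}$ is unitary, hence normal, $W(U_{\gamma})=\operatorname{conv}\sigma(U_{\gamma})$ is a polygon $P_{\gamma}$ inscribed in the unit circle.

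Next I would compute the eigenvalues. By the Schur determinant formula, for $\lambda\notin\sigma(T)$,
$$\det(\lambda I-U_{\gamma})=\det(\lambda I-T)\,\bigl[(\lambda-c)-b^{\ast}(\lambda I-T)^{-1}a\bigr].$$
Substituting the explicit model vectors $a,b$ and simplifying, the bracket combines with $\det(\lambda I-T)=p(\lambda)=\prod_{j}(\lambda-\alpha_{j})$ to reduce the eigenvalue equation to the single scalar relation $\lambda\,\phi(\lambda)=\gamma$, in which $\lambda\phi(\lambda)$ is a Blaschke product of degree $n+1$. Two consequences follow at once. Since a degree $n+1$ Blaschke product maps $\mathbb T$ onto $\mathbb T$ as an $(n+1)$-fold covering with non-vanishing derivative, for each $\gamma$ there are exactly $n+1$ distinct solutions on $\mathbb T$, so $P_{\gamma}$ is a genuine $(n+1)$-gon; and for each prescribed $\lambda\in\mathbb T$ there is a unique $\gamma=\lambda\phi(\lambda)$ with $\lambda\in\sigma(U_{\gamma})$. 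The latter gives existence and uniqueness of the dilation, hence of the inscribed polygon, having $\lambda$ as a vertex, and realises the announced one-to-one correspondence between polygons and dilations with vertices equal to eigenvalues.

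The heart of the proof, and the step I expect to be the main obstacle, is the tangency (circumscription) of the sides. Fix an edge of $P_{\gamma}$ joining two adjacent vertices $\lambda_{j},\lambda_{j+1}$ and let $e^{i\theta}$ be its outward unit normal, so that $\operatorname{Re}(e^{-i\theta}\lambda_{j})=\operatorname{Re}(e^{-i\theta}\lambda_{j+1})=\mu$ is the maximal, and double, eigenvalue of the Hermitian matrix $A_{\theta}=\operatorname{Re}(e^{-i\theta}U_{\gamma})$. Since $\operatorname{Re}(e^{-i\theta}T)=P_{\mathcal H}A_{\theta}\big|_{\mathcal H}$ is a compression of $A_{\theta}$ to a subspace of codimension one, the Cauchy interlacing theorem gives $a_{1}\ge b_{1}\ge a_{2}$ for the top eigenvalues, where $a_{1}\ge\cdots\ge a_{n+1}$ and $b_{1}\ge\cdots\ge b_{n}$ list the eigenvalues of $A_{\theta}$ and of its compression; as $a_{1}=a_{2}=\mu$ this forces $\lambda_{\max}\bigl(\operatorname{Re}(e^{-i\theta}T)\bigr)=\mu$. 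Hence the supporting line of $W(T)$ with outward normal $e^{i\theta}$ is exactly $\{\operatorname{Re}(e^{-i\theta}z)=\mu\}$, which is the line carrying the edge: every side of $P_{\gamma}$ is tangent to $\partial W(T)$. Together with $W(T)\subseteq P_{\gamma}$ this shows that $P_{\gamma}$ circumscribes $\partial W(T)$, giving the $(n+1)$-Poncelet statement.

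The delicate points I would have to secure are, first, that the family $\{U_{\gamma}\}$ exhausts the unitary-equivalence classes of $(n+1)$-dimensional unitary dilations, so that the correspondence is genuinely bijective; and second, the ``double top eigenvalue'' mechanism above, where one must check that for the edge normal the two extreme eigenvalues of $A_{\theta}$ really coincide, that the interlacing inequality is therefore saturated, and conversely that every supporting direction of $W(T)$ is attained by some edge as $\gamma$ runs over $\mathbb T$. Once these are in place, existence, uniqueness and circumscription all drop out of the single Blaschke relation $\lambda\phi(\lambda)=\gamma$ together with the interlacing inequality.
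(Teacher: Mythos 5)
This theorem is not proved in the paper at all: it is imported verbatim from Gau--Wu (\cite{Wu1}, Theorem 5.1; \cite{Wu2}, Theorem 2.1), so there is no ``paper's own proof'' to compare against. Your sketch is, in substance, the standard argument of Gau--Wu and Mirman: reduce to the model $\bbS(\phi)$, parametrise the $(n+1)$-dimensional unitary dilations by a single unimodular scalar $\gamma$, identify $\sigma(U_{\gamma})$ with the fibre of a degree-$(n+1)$ Blaschke product over $\gamma$, and obtain tangency of each side from the Cauchy interlacing theorem applied to $\mathscr{R}e(e^{-i\theta}U_{\gamma})$ and its codimension-one compression $\mathscr{R}e(e^{-i\theta}T)$. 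The interlacing step is correct as you state it: for the outward normal of an edge the top eigenvalue of $\mathscr{R}e(e^{-i\theta}U_{\gamma})$ is attained exactly at the two adjacent vertices (no three distinct points of the circle are collinear), so $a_{1}=a_{2}=\mu$ forces $\lambda_{\max}(\mathscr{R}e(e^{-i\theta}T))=\mu$, and since $W(T)\subseteq P_{\gamma}$ the touching point lies on the edge segment. Note also that the converse you worry about (every supporting direction of $W(T)$ being realised by an edge) is not needed for the statement as given.

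The one genuine gap is the uniqueness claim. What your argument delivers is: each $U_{\gamma}$ yields a circumscribing inscribed $(n+1)$-gon, and for each $\lambda\in\mathbb{T}$ there is exactly one $\gamma$ with $\lambda\in\sigma(U_{\gamma})$. That proves existence, and uniqueness \emph{within the family} $\{P_{\gamma}\}$, but the theorem asserts uniqueness among \emph{all} $(n+1)$-gons inscribed in the circle and circumscribing $\partial W(T)$ with vertex $\lambda$; you have not shown that every such polygon arises from a dilation, so the announced bijection is only established in one direction. To close this you either need that surjectivity, or the direct geometric argument: since $w_{2}(T)<1$ (Lemma~\ref{Mirmana3}), $\overline{W(T)}$ is a compact convex subset of the open disc, so from any vertex $\lambda$ on the circle there are exactly two tangent lines to $W(T)$; the two sides of any circumscribing polygon at $\lambda$ must be these two lines, which determines the two neighbouring vertices, and iterating around the circle determines the whole polygon from $\lambda$ alone. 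With that observation (and the exhaustiveness of $\{U_{\gamma}\}$ up to unitary equivalence fixing $\mathcal{H}$, which you correctly flag but is routine), the sketch becomes a complete proof.
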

Theorem \ref{ter1} yields additional properties for the numerical ranges of matrices in $\Upsilon_{n}$.
\begin{corollary}[Corollary 5.2 \cite{Wu2}]\label{ter2}
\textit{Let $T$ be a matrix in $\Upsilon_{n}$. Then}
\begin{enumerate}
  \item \textit{$W(T)$ is contained in no m-gon inscribed in the unit circle for $m\leq n$.}
\item \textit{$w_{2}(T)>\cos(\pi/n)$.}
\item \textit{$\mathscr{R}e(T)$ and $\mathscr{I}m(T)$ have simple eigenvalues.\label{jj}}
\item \textit{The boundary of $W(T)$ contains no line segment and is an algebraic curve.}
 \end{enumerate}
\end{corollary}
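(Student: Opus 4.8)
The plan is to derive all four items from \thmref{ter1}, treating the $(n+1)$-Poncelet property as the engine and reading off its consequences for supporting lines, inscribed discs, and the Kippenhahn curve. I would first settle (1) and (2) together, since (2) is a quick corollary of (1). For (1) I would invoke the minimality built into \thmref{ter1}: the inscribed polygons that circumscribe $\partial W(T)$ are exactly the $(n+1)$-gons whose vertices are the eigenvalues of an $(n+1)$-dimensional unitary dilation of $T$. Because $T$ acts on an $n$-dimensional space and is not unitary (it is completely non-unitary with $\mathrm{rank}(I-T^{\ast}T)=1$), it admits no unitary dilation on a space of dimension $\le n$; a dilation to the same dimension would force $T$ itself to be unitary. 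Through the correspondence of \thmref{ter1} this says that $W(T)$ cannot be enclosed by an inscribed polygon with only $m\le n$ sides, which is (1). Then (2) follows at once: if $w_{2}(T)\le\cos(\pi/n)$, then $W(T)$ would lie inside the closed disc of radius $\cos(\pi/n)$, which is precisely the incircle of a regular $n$-gon inscribed in the unit circle; that regular $n$-gon would contain $W(T)$, contradicting (1). Hence $w_{2}(T)>\cos(\pi/n)$.

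For (4) I would use Kippenhahn's description of the numerical range. The boundary $\partial W(T)$ is part of the real dual of the algebraic curve $\det\!\big(x\,\mathscr{R}e(T)+y\,\mathscr{I}m(T)+z\,I\big)=0$ of degree $n$, so its algebraicity needs no further argument. The substantive point is the absence of line segments, which I would attack through the identity: $\partial W(T)$ carries a segment orthogonal to $e^{i\theta}$ if and only if the largest eigenvalue of $H(\theta):=\mathscr{R}e(e^{-i\theta}T)$ is multiple. I would rule this out by comparing $H(\theta)$, which is the compression of $\mathscr{R}e(e^{-i\theta}U)$ to the codimension-one model subspace, against the $n+1$ numbers $\cos(\theta_j-\theta)$, where the $e^{i\theta_j}$ are the distinct eigenvalues of the dilation $U$ furnished by \thmref{ter1}. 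Cauchy interlacing, combined with the rank-one defect, should force the spectrum of $H(\theta)$ to interlace that of $\mathscr{R}e(e^{-i\theta}U)$ strictly; in particular the top eigenvalue is simple and no segment appears.

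Item (3) is then the special cases $\theta=0$ and $\theta=\pi/2$ of the same statement, since $\mathscr{R}e(T)=H(0)$ and $\mathscr{I}m(T)=\mathscr{R}e(-iT)=H(\pi/2)$. The main obstacle, and the step on which I expect to spend almost all the effort, is exactly this interlacing argument in its strong form: simplicity of the \emph{whole} spectrum of $H(\theta)$, not merely of its extreme eigenvalue. The difficulty is twofold. First, $\mathscr{R}e(e^{-i\theta}U)$ can itself acquire a repeated eigenvalue for exceptional $\theta$, namely when two vertices of the Poncelet polygon are symmetric about the diameter in the direction $\theta$; second, strict interlacing of the compression requires the vector spanning the one-dimensional defect to be cyclic for $\mathscr{R}e(e^{-i\theta}U)$.

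I would dispose of these exceptional directions by one of two routes. The first is an algebraic-continuation argument on the discriminant of $z\mapsto\det(x\,\mathscr{R}e(T)+y\,\mathscr{I}m(T)+zI)$, using that $T$ has a cyclic vector and an irreducible Kippenhahn curve so that the locus of directions with a multiple eigenvalue is a proper algebraic subset that the generic-direction estimate then forces to be empty. The second, cleaner when available, is to exploit the rank-one structure of $I-T^{\ast}T$ to reduce $H(\theta)$ to tridiagonal (Jacobi) form, for which simplicity of the spectrum is automatic; this is transparent for $\phi(z)=z^{n}$, where $\mathscr{R}e(\bbS_{n})$ is a Jacobi matrix, but extending it uniformly to an arbitrary finite Blaschke product is precisely where the real work lies, so I would expect the algebraic-continuation route to be the more robust of the two.
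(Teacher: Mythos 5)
The paper itself gives no proof of this statement: it is quoted as Corollary 5.2 of \cite{Wu2}, so there is nothing in the text to compare your argument against, and I can only assess the proposal on its own terms. The serious gap is in item (1). Theorem \ref{ter1} sets up a correspondence between $(n+1)$-gons that \emph{circumscribe} $\partial W(T)$ (every side tangent to it) and unitary dilations on $(n+1)$-dimensional spaces; it says nothing about $m$-gons that merely \emph{contain} $W(T)$, and the true observation that $T$ admits no unitary dilation on a space of dimension $\leq n$ does not pass through that correspondence to exclude containing inscribed $m$-gons. Containment of $W(T)$ in the convex hull of $m$ unimodular points only yields a unitary dilation with spectrum among those points on a space of a priori unbounded dimension, which contradicts nothing. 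The argument that actually works (and is Gau--Wu's) is geometric: assume $W(T)\subseteq P$ with $P$ an inscribed $m$-gon, $m\leq n$, take a vertex $\lambda$ of $P$, let $Q$ be the unique circumscribing $(n+1)$-gon from Theorem \ref{ter1} with vertex $\lambda$, and show that, travelling around the circle, each successive vertex of $Q$ lies on or before the corresponding vertex of $P$; hence $Q$ would close up after at most $m\leq n$ steps, contradicting that it has $n+1$ vertices. Your derivation of (2) from (1) via the incircle of the regular $n$-gon is correct as it stands.

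For (3) and (4) your mechanism --- compress $\mathscr{R}e(e^{-i\theta}U)$ to the codimension-one subspace and use interlacing --- is the right one and is essentially Mirman's; the non-vanishing you need, namely $\langle w,u\rangle\neq 0$ for every eigenvector $u$ of $U$, is exactly condition (3) of Lemma \ref{Mirmana3} and holds because $w_{2}(T)<1$ for $T\in\Upsilon_{n}$. But you stop precisely where the proof is needed, and the two patches you sketch are heavier than necessary. The ``exceptional directions'' cause no difficulty: $\mathscr{R}e(e^{-i\theta}U)$ is diagonal in the eigenbasis $\{u_{j}\}$ of $U$ with entries $\beta_{j}=\cos(\theta_{j}-\theta)$, and since the $n+1$ angles $\theta_{j}$ are distinct, any repeated $\beta_{j}$ has multiplicity exactly $2$. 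Writing $w=\sum_{j}c_{j}u_{j}$ with all $c_{j}\neq0$, the characteristic polynomial of the compression is $\sum_{j}|c_{j}|^{2}\prod_{k\neq j}(z-\beta_{k})$, and a direct check shows this degree-$n$ polynomial has $n$ simple real roots even when two of the $\beta_{j}$ coincide (the common value is then a simple root, and the remaining factor is again a compression of a diagonal matrix with distinct entries by a vector with no zero component). That computation, or an equivalent strict-interlacing statement, is the missing step; without it items (3) and (4) are not established, and neither the discriminant argument nor the tridiagonalization route is carried far enough to substitute for it.
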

The reader may consult \cite{Halmos} chapter 22 for properties of numerical ranges of operators in general, \cite{Horn} chapter 1
 for those of finite dimensional operators and particularly \cite{Wu2} for the geometric properties of the numerical range 
of $\bbS(\phi)$.

\hspace{0.5cm}The numerical radius and the numerical range of of the model operator $\bbS(\phi)$ seems to be important and have many applications.
In \cite{Gaaya1}, the author showed that there is relationship between numerical radius of $\bbS(\phi)$ and Taylor coefficients of positive
rational functions on the torus which extends a previous result of C. Badea and G. Cassier (\cite{Cassier} Theorem 5.1). This result is formulated as the following:
\begin{theorem}[\cite{Gaaya1} Theorem 2.1]
 \textit{Let $F=P/Q$ be a rational function which is positive on the torus, where ${P}$ and ${Q}$ are coprime. 
Denote by $$\phi(z)=\prod_{j=1}^{p}\left( \dfrac{z-\alpha_{j}}{1-\overline{\alpha_{j}}z}\right)^{m_{j}} $$ and 
$$\psi(z)=\prod_{j=1}^{q}\left( \dfrac{z-\beta_{j}}{1-\overline{\beta_{j}}z}\right)^{d_{j}} $$ the respectively finite 
Blaschke products formed by the nonzero roots of $P$ and $Q$ in the open disc, let $m=\sum_{j=1}^{p}m_{j}$ and 
$d=\sum_{j=1}^{q}d_{j}$. Then the Taylor coefficient $c_{k}$ of order $k$ of $F$ satisfies the following inequality:
 \begin{eqnarray*}
  \lvert c_{k} \lvert \leqslant c_{0}~\omega_{2} ({\bbs}^{k}(\varphi)),~ \mbox{where}~ \varphi(z)=z^{\max(0,m-d+1)}\psi(z).
 \end{eqnarray*}
}
\end{theorem}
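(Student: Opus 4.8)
The plan is to realise the ratio $c_k/c_0$ as a single value of the numerical range of $\bbs^k(\varphi)$: I will produce one vector $g$ in the model space $H(\varphi)$ for which $\langle \bbs^k(\varphi)g,g\rangle=c_k$ and $\lVert g\rVert^2=c_0$, after which the estimate is immediate from the definition of the numerical radius, $\lvert\langle \bbs^k(\varphi)g,g\rangle\rvert\le\lVert g\rVert^2\,\omega_2(\bbs^k(\varphi))$.

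First I would spectrally factor $F$. Since $F$ is rational and strictly positive on the torus it has no zeros or poles on $\mathbb{T}$ and $\log F\in L^1(\mathbb{T})$, so $F$ admits an outer factor $g\in\H2$ with $\lvert g\rvert^2=F$ on $\mathbb{T}$; being the outer factor of a rational function, $g=A/B$ is itself rational. A direct Fourier computation then gives, for every $k\ge 0$,
\begin{equation*}
c_k=\widehat{F}(k)=\widehat{\lvert g\rvert^2}(k)=\langle g,\bbS^k g\rangle=\langle \bbs^k g,g\rangle,\qquad c_0=\lVert g\rVert^2>0,
\end{equation*}
so that both $c_k=\langle \bbs^k g,g\rangle$ and $c_0=\langle g,g\rangle$ are read off from the same vector $g$.

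The heart of the argument is to show $g\in H(\varphi)$ for $\varphi(z)=z^{\max(0,m-d+1)}\psi(z)$. Here I would pin down the rational form of $g$. The inside poles of $F$ are exactly the zeros $\beta_j$ of $Q$ in the disc (of multiplicity $d_j$), so the poles of $g$ sit at the reflected points $1/\overline{\beta_j}$ and its denominator is $B(z)=\prod_j(1-\overline{\beta_j}z)^{d_j}$, of degree $d$ --- precisely the denominator attached to $\psi$. The zeros of $g$ are the reflections of the inside zeros $\alpha_j$ of $P$, whence $\deg A=m$, once one checks that positivity of $F$ forces the symmetry $F(z)=\overline{F}(1/z)$ and hence an origin zero or pole accounting for the difference $m-d$; it is this that balances the degrees and yields $\deg A=m$ exactly (coprimality of $P,Q$ rules out cancellation). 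Using the description $H(z^{N}\psi)=\{\,R/B:\deg R\le N+d-1\,\}$ of the model space (the origin factors $z^N$ contribute trivially to the common denominator but add $N$ to the numerator budget), membership $g\in H(\varphi)$ is equivalent to $\deg A\le N+d-1$, i.e. to $N\ge m-d+1$; and $N=\max(0,m-d+1)$ is exactly the least nonnegative integer for which this holds.

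Finally, since $H(\varphi)=\mathrm{Ker}(\varphi(\bbS)^{\ast})$ is invariant under $\bbs$ and $\bbs(\varphi)=\bbs_{\lvert H(\varphi)}$, one has $\bbs^k g=\bbs^k(\varphi)g$ for $g\in H(\varphi)$, so $c_k=\langle \bbs^k(\varphi)g,g\rangle$ with $g\in H(\varphi)$. The definition of the numerical radius then gives $\lvert c_k\rvert=\lvert\langle \bbs^k(\varphi)g,g\rangle\rvert\le\lVert g\rVert^2\,\omega_2(\bbs^k(\varphi))=c_0\,\omega_2(\bbs^k(\varphi))$, as claimed. I expect the main obstacle to be the degree bookkeeping of the third step --- in particular the careful treatment of zeros of $P$ or $Q$ on the circle or at the origin, where the generic count $\deg A=m$ must be re-examined --- since it is exactly this count that produces the sharp shift $z^{\max(0,m-d+1)}$.
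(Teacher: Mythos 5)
Your proposal is correct, but note that this paper contains no proof of the statement at all: it is quoted verbatim from the author's earlier article \cite{Gaaya1}, so there is nothing here to compare against line by line. Your argument --- spectral (outer) factorization $F=\lvert g\rvert^{2}$ with $g=A/\prod_j(1-\overline{\beta_j}z)^{d_j}$, the identification $c_k=\langle \bbs^{k}g,g\rangle$, $c_0=\lVert g\rVert^{2}$, and the degree count placing $g$ in $H(z^{\max(0,m-d+1)}\psi)$ --- is exactly the standard route used in \cite{Gaaya1} and in the Badea--Cassier result it extends, and your own caveat about boundary zeros is well taken: the inequality genuinely requires strict positivity on the torus (for $F(z)=-(z-1)^{2}/z$ one gets $\varphi(z)=z$ and the claimed bound $\lvert c_1\rvert=0$ fails), which is how the hypothesis must be read.
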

However, the evaluation of the numerical radius of $\bbS(\phi)$ under an explicit form is always an open problem. Which
explain the motivation of our first main result. In the section \ref{aa}, we
give an explicit formula of the numerical radius of $\bbS(\phi)$ in the particular case where $\phi$ is a finite Blaschke product with 
unique zero $\alpha$ in the unit disc:$$\phi(z)=\phi_{\alpha}(z)=\left( \dfrac{z-\alpha}{1-\overline{\alpha}z}\right)^n.$$
Our result, officially stated as Theorem \ref{Haykoulita}, is:
$$\omega_{2}(S(\phi))=\frac{-(1+|\alpha|^{2})\cos t_{n}^{(n)}+2|\alpha|}{1-2|\alpha| \cos t_{n}^{(n)}+|\alpha|^{2}}=
\dfrac{1-|\alpha|^{2}}{2\alpha}\Big(-P_{|\alpha|}(e^{it_{n}^{(n)}})+\dfrac{1+|\alpha|^{2}}{1-|\alpha|^{2}}\Big).$$
Here $P_{|\alpha|}(e^{it})=\sum_{k\in\mathbb{Z}}|\alpha|^{|k|}e^{ikt}$ is the Poisson kernel and $t_{n}^{(n)}$ is a precise point in 
the interval $]\frac{(n-1)\pi}{n+1},\frac{(n)\pi}{n+1}]$. In the section 3.1, we shall see how the celebrated Toeplitz matrix of 
Kac, Murdokh and Szegö (the Toeplitz matrix with the Poisson kernel as symbol) will play an important role to obtain this result.
We defer the proof of Theorem \ref{Haykoulita} to the section 3.2., preferring to devote the remainder of Section 3 to its 
corollaries. In the general case where $\phi$ is an arbitrary finite Blaschke
 product, an estimate of the numerical radius is given in section \ref{ee}.

\hspace{0.5cm}A celebrated inequality due to von Neumann \cite{Neumann} asserts that for $$\rVert p(T)\rVert\leq \rVert p\rVert_{\infty},$$
 for all polynomial $p\in\mathbb{C}[X]$. The same inequality holds for functions in the disc algebra $\mathbb{A(\mathbb{D})}$. More general,
if $T$ is a completely non-unitary (c.n.u) contraction, this result extends to bounded analytic functions $f\in\mathbb{H}^{\infty}$ \cite{Nagy}. Ptak and Young 
has also proved:
\begin{theorem}[\cite{Ptak}]
\textit{Suppose that $p$ and $q$ are arbitrary analytic polynomials and $T$ be a Hilbert space contraction in $\mathscr{B}(\mathcal{H})$ 
 such that the spectral radius $r(T)<1$ and $q(T)=0$. Then $$\rVert p(T)\rVert \leq \rVert p(S^{\ast}|\mbox{Ker}~~ q(S^{\ast}))\rVert.$$}
\end{theorem}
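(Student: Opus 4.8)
The plan is to bound $\|p(T)\|$ above by a distance in $L^\infty(\mathbb{T})$ that no longer involves $T$, and then to recognise that same distance as exactly $\|p(\bbs\,|\,\operatorname{Ker}q(\bbs))\|$; write $M=\bbs\,|\,\operatorname{Ker}q(\bbs)$. A preliminary normalisation: since $r(T)<1$, every eigenvalue of $T$ lies in the open disc, so the minimal polynomial of $T$ has all its roots there; replacing $q$ by the product of its factors whose roots lie in $\mathbb{D}$ changes neither the hypothesis $q(T)=0$ nor the space $\operatorname{Ker}q(\bbs)$ (the Cauchy kernels $k_\lambda$ spanning it require $|\lambda|<1$). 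Hence I may assume all zeros $\alpha_1,\dots,\alpha_n$ of $q$ lie in $\mathbb{D}$.

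First I would use the hypothesis $q(T)=0$. Because $r(T)<1$ gives $\|T^m\|\to0$, the contraction $T$ is completely non-unitary (of class $C_{0\cdot}$), so it admits the $H^\infty$ functional calculus of Sz.-Nagy and Foias with the von Neumann bound $\|f(T)\|\le\|f\|_\infty$ for $f\in H^\infty$. For any $g\in H^\infty$ one has $(p+gq)(T)=p(T)+g(T)q(T)=p(T)$, whence $\|p(T)\|=\|(p+gq)(T)\|\le\|p+gq\|_\infty$; taking the infimum over $g$ yields the key estimate
$$\|p(T)\|\le\operatorname{dist}_{L^\infty(\mathbb{T})}\!\big(p,\,qH^\infty\big).$$

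The second step is to show this distance equals $\|p(M)\|$. Let $\phi$ be the finite Blaschke product with zeros $\alpha_1,\dots,\alpha_n$. Writing $q=\phi\,g$ with $g(z)=\prod_j(1-\overline{\alpha_j}z)$ outer and invertible in $H^\infty$, one gets $qH^\infty=\phi H^\infty$, so the distance above is $\operatorname{dist}_{L^\infty}(p,\phi H^\infty)$, which by Sarason's theorem is precisely the norm of the compressed Toeplitz operator $p(S(\phi))=P_{H(\phi)}\,p(\bbS)\,|_{H(\phi)}$. It then remains to match $M$ with a model operator. Since $\bbs k_\lambda=\overline{\lambda}\,k_\lambda$, the eigenvalue relation shows $\operatorname{Ker}q(\bbs)$ is the model space $H(\widetilde\phi)$ of the conjugate Blaschke product $\widetilde\phi(z)=\overline{\phi(\bar z)}$ (zeros $\overline{\alpha_j}$), so $M=\bbs\,|_{H(\widetilde\phi)}=S(\widetilde\phi)^{\ast}$. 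Consequently $\|p(M)\|=\|p(S(\widetilde\phi)^{\ast})\|=\|\overline{p}\,(S(\widetilde\phi))\|=\operatorname{dist}_{L^\infty}(\overline{p},\widetilde\phi H^\infty)$, where $\overline{p}$ denotes $p$ with conjugated coefficients; applying the conjugate-linear isometry $J:h\mapsto\overline{h(\bar z)}$ of $L^\infty(\mathbb{T})$, which preserves $H^\infty$ and sends $\overline{p}\mapsto p$ and $\widetilde\phi\mapsto\phi$, turns this last quantity into $\operatorname{dist}_{L^\infty}(p,\phi H^\infty)$. Chaining the two steps gives $\|p(T)\|\le\operatorname{dist}_{L^\infty}(p,qH^\infty)=\|p(M)\|$, as required.

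The main obstacle is the exact norm computation in the second step. The easy inequality $\|p(M)\|\le\operatorname{dist}(p,qH^\infty)$ is nothing but the first step applied to $T=M$ (which is itself a contraction with $q(M)=0$ and $r(M)<1$); the substantive content is the reverse inequality, i.e.\ that the model operator $M$ is extremal and attains the distance, which is exactly Sarason's theorem (equivalently Nehari's theorem, or a one-step commutant-lifting argument). The only remaining delicacy is bookkeeping: keeping track of the two conjugations---the passage from $M=A^{\ast}$ to $A$, and from the polynomial $q$ to its inner part $\phi$---and checking that only the zeros of $q$ inside $\mathbb{D}$ play a role. Everything else is routine.
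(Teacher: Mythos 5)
This is one of the quoted background results: the paper states the P\'{t}ak--Young theorem with a citation to \cite{Ptak} and gives no proof of its own, so there is nothing internal to compare your argument against. On its own terms your proof is correct and complete. The two halves are exactly the right ones: (i) since $r(T)<1$ forces $\|T^{m}\|\to 0$, $T$ is a c.n.u.\ contraction, the Sz.-Nagy--Foias $H^{\infty}$ calculus applies, and $q(T)=0$ gives $\|p(T)\|\le\inf_{g}\|p+gq\|_{\infty}=\mathrm{dist}_{L^{\infty}}(p,qH^{\infty})$; (ii) Sarason's distance formula identifies that distance with the norm of the compressed operator, and the conjugation bookkeeping (passing from $S^{\ast}|\mathrm{Ker}\,q(S^{\ast})=S(\widetilde{\phi})^{\ast}$ back to $S(\phi)$, and from $q$ to its inner part $\phi$) is carried out correctly, as is the preliminary reduction discarding the roots of $q$ outside $\mathbb{D}$ (justified because the corresponding factors of $q$ evaluate to invertible operators at $T$ and to injective operators at $S^{\ast}$). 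In effect you have derived the statement as the polynomial case of the Sz.-Nagy theorem \cite{Nagy3} quoted immediately after it in the paper, which is a perfectly legitimate (and arguably cleaner) modern route; the original argument of \cite{Ptak} predates this packaging and proceeds differently, but nothing in your version depends on it. The only points worth a sentence more in a written-up version are the degenerate cases ($q\equiv 0$, or all roots of $q$ outside $\overline{\mathbb{D}}$, where $\mathcal{H}=\{0\}$) and the fact that, for multiple roots, $\mathrm{Ker}\,q(S^{\ast})$ is spanned by the Cauchy kernels together with their derivatives --- both are routine and do not affect the argument.
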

The condition $r(T)<1$ is not indispensable and the following generalisation was given by Sz.-Nagy
\begin{theorem}[\cite{Nagy3}]
 \textit{ Let $f$ and $g$ be two functions in $\mathbb{H}^{\infty}$ and $T$ be a c.n.u contraction in $\mathscr{B}(\mathcal{H})$ such that
 $g(T)=0$. Then 
$$\rVert f(T)\rVert \leq \rVert f(S^{\ast}|\mbox{Ker}~~ g(S^{\ast}))\rVert.$$}
\end{theorem}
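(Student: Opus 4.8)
The plan is to sandwich $\|f(T)\|$ between the von Neumann bound for $T$ and the exact $H^{\infty}$-calculus norm of the operator on the right, both of which will collapse to a single distance in $L^{\infty}$. Throughout, factor $g=b\,g_{o}$ with $b$ inner and $g_{o}$ outer, and set $N=\bbs\!\mid\!\operatorname{Ker} g(\bbs)$. First I would reduce every object to the inner factor $b$. On the $T$-side, multiplicativity of the Sz.-Nagy--Foias functional calculus gives $0=g(T)=b(T)g_{o}(T)$; since $g_{o}$ is outer, $g_{o}(T)$ has dense range, and therefore $b(T)=0$. On the $N$-side, the same factorization at the level of $\bbs$ yields $\operatorname{Ker} g(\bbs)=\operatorname{Ker} b(\bbs)$, the outer factor contributing an injective operator. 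Hence $N$ is a completely nonunitary contraction of class $C_{0}$ and multiplicity one whose minimal function is $b$; by uniqueness of the Jordan model it is unitarily equivalent to $\bbS(b)$, so in particular $b(N)=0$ and $\|f(N)\|=\|f(\bbS(b))\|$.

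Next comes the upper bound for $T$. Because $b(T)=0$ and the calculus is multiplicative, $f(T)=(f-bh)(T)$ for every $h\in H^{\infty}$, so the von Neumann inequality for c.n.u.\ contractions quoted above \cite{Nagy} gives
\[
\|f(T)\|=\|(f-bh)(T)\|\le\|f-bh\|_{\infty}.
\]
Taking the infimum over $h$ produces $\|f(T)\|\le \mathrm{dist}_{L^{\infty}}(f,bH^{\infty})$. The matching value for $N$ is the heart of the matter: the very same subtraction applied to $\bbS(b)$ shows $\|f(\bbS(b))\|\le \mathrm{dist}_{L^{\infty}}(f,bH^{\infty})$, whereas the reverse inequality is exactly Sarason's theorem, a special case of the commutant lifting theorem of Sz.-Nagy--Foias: the operator $f(\bbS(b))$ lying in the commutant of $\bbS(b)$ admits a symbol agreeing with $f$ modulo $bH^{\infty}$ and of $L^{\infty}$-norm equal to $\|f(\bbS(b))\|$, so that $\mathrm{dist}_{L^{\infty}}(f,bH^{\infty})\le\|f(\bbS(b))\|=\|f(N)\|$. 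Combining the two displays gives the assertion,
\[
\|f(T)\|\le \mathrm{dist}_{L^{\infty}}(f,bH^{\infty})=\|f(N)\|=\|f(\bbs\!\mid\!\operatorname{Ker} g(\bbs))\|.
\]

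The main obstacle is precisely the lower bound $\mathrm{dist}_{L^{\infty}}(f,bH^{\infty})\le\|f(N)\|$: the elementary ``$\le$'' direction merely reproves von Neumann, and turning the distance into an attained operator norm requires lifting a commutant of $\bbS(b)$ to a commutant of the full shift $\bbS$ without increasing the norm, i.e.\ commutant lifting. A secondary but genuine point, not mere bookkeeping, is the identification of the minimal function of $N$ with the inner part $b$ of $g$: here one must read $g(\bbs)$ as substitution of $\bbs$ into the Taylor series of $g$ and track the passage to adjoints carefully, since it is this reading that makes both sides governed by the one ideal $bH^{\infty}$ and so makes the two bounds meet.
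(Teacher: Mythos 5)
This statement is quoted in the paper as a known theorem of Sz.-Nagy (reference \cite{Nagy3}) and the paper supplies no proof of it, so there is nothing internal to compare your argument against; I can only assess it on its own terms. Your proof is correct and is essentially the standard modern route to this result. The reduction to the inner factor is sound: from $0=g(T)=b(T)g_{o}(T)$ you need that $g_{o}(T)$ has dense range for a c.n.u.\ contraction $T$ and outer $g_{o}$, which is the classical fact used to show that minimal functions of $C_{0}$ operators are inner (Sz.-Nagy--Foias, Ch.~III); you should cite it explicitly rather than assert it, since it is the only place where complete nonunitarity of $T$ enters. The identification of $S^{\ast}|\mbox{Ker}~g(S^{\ast})$ with a model operator is also fine, though invoking ``uniqueness of the Jordan model'' is both heavier and slightly off-register (the Jordan model is a quasi-similarity statement, which does not a priori preserve $\|f(\cdot)\|$); the clean statement is that $S^{\ast}|\mbox{Ker}~b(S^{\ast})=S(\tilde b)^{\ast}$ is \emph{unitarily} equivalent to $S(b)$, where $\tilde b(z)=\overline{b(\bar z)}$, and that the map $h\mapsto\tilde h$ is an isometric automorphism of $H^{\infty}$ carrying $bH^{\infty}$ to $\tilde bH^{\infty}$, so all the distances and norms match. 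The two bounds, $\|f(T)\|\le\mathrm{dist}_{L^{\infty}}(f,bH^{\infty})$ by von Neumann applied to $f-bh$, and $\mathrm{dist}_{L^{\infty}}(f,bH^{\infty})=\|f(S(b))\|$ by Sarason's commutant lifting theorem, are exactly right and correctly identified as the crux. One degenerate case is worth a sentence: if $g\equiv0$ the factorization $g=bg_{o}$ does not exist, but then $\mbox{Ker}~g(S^{\ast})=\H2$ and the claim is plain von Neumann; and if $g$ is outer and nonzero, your dense-range argument forces $\mathcal{H}=\{0\}$, consistent with the right-hand side being the norm of an operator on the zero space.
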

In 1992, U. Haagerup and P. de la Harpe proved, using solely elementary methods (positive definite Kernels) that:
\begin{theorem}[\cite{Haagerup}]\label{Haa}
\textit{ Let $T$ be a Hilbert space contraction in $\mathscr{B}(\mathcal{H})$ such that $T^{n}=0$ for some $n \geq 2$. One has:
$$\omega_{2}(T)\leqslant \lVert T\lVert \omega_{2}( S_{n})=\lVert T\lVert \cos\frac{\pi}{n+1}.$$
Further $\omega_{2}(T)=\lVert T\lVert \cos\frac{\pi}{n+1}$ when $T$ is unitarily equivalent to 
$ S_{n}$}.
\end{theorem}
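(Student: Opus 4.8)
The plan is to recast the numerical radius as a supremum of real parts and exploit the rotation invariance of the hypotheses. Since
$\omega_{2}(T)=\sup_{\theta}\sup_{\|x\|=1}\langle\mathscr{R}e(e^{i\theta}T)x,x\rangle$
and the class of contractions with $T^{n}=0$ is stable under $T\mapsto e^{i\theta}T$ (with $\|e^{i\theta}T\|=\|T\|$ and $(e^{i\theta}T)^{n}=0$), it suffices to prove the single operator inequality $\mathscr{R}e(T)\le c\|T\|\,I$, where $c=\cos\frac{\pi}{n+1}$; applying it to each $e^{i\theta}T$ then recovers the stated bound on $\omega_{2}(T)$. By homogeneity I normalise $\|T\|\le1$ and aim to show $2cI-T-T^{\ast}\ge0$.

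Next I would localise the inequality on the cyclic data. For a unit vector $x$ set $\xi_{k}=T^{k}x$ for $0\le k\le n-1$, so that $T\xi_{n-1}=\xi_{n}=0$, and form the Gram matrix $G=(\langle\xi_{j},\xi_{k}\rangle)_{0\le j,k\le n-1}$. The intertwining $T\xi_{k}=\xi_{k+1}$ says precisely that the map $e_{k}\mapsto\xi_{k}$ carries the nilpotent Jordan matrix $\bbS_{n}$ to $T$; consequently $\|T\|\le1$ forces $\bbS_{n}^{\ast}G\,\bbS_{n}\le G$, while $G\ge0$ holds automatically. Since $\mathscr{R}e\langle Tx,x\rangle=\mathscr{R}e\,G_{10}$ and $\|x\|^{2}=G_{00}$, the theorem reduces to a finite, $T$-free statement: \emph{for every positive semidefinite $G\in M_{n}(\C)$ with $\bbS_{n}^{\ast}G\,\bbS_{n}\le G$ one has $\mathscr{R}e\,G_{10}\le c\,G_{00}$}. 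This is the positive-definite-kernel core of the argument, in the spirit of \cite{Haagerup}.

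To settle the matrix inequality I would produce an explicit dual certificate built from the spectral data of $\bbS_{n}$. Its selfadjoint part $\mathscr{R}e(\bbS_{n})$ is the tridiagonal matrix with zero diagonal and $\tfrac12$ on the off-diagonals, whose eigenvalues are $\cos\frac{k\pi}{n+1}$ for $1\le k\le n$, with top eigenvalue $c$ realised by the sine vector $v=(v_{k})$, $v_{k}=\sin\frac{(k+1)\pi}{n+1}$. The three-term recurrence $v_{k-1}+v_{k+1}=2c\,v_{k}$ together with the vanishing boundary values $v_{-1}=v_{n}=0$ is exactly what pins the constant: testing the relations $G-\bbS_{n}^{\ast}G\,\bbS_{n}\ge0$ and $G\ge0$ against $v$ and its shift, and recombining through this recurrence, collapses the cross terms and leaves $c\,G_{00}-\mathscr{R}e\,G_{10}\ge0$. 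I would first check the case $n=2$ by hand (where $c=\tfrac12$ and the condition reduces to positivity of a single $2\times2$ block) and then carry out the general telescoping.

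Finally, sharpness and the equality clause follow once the model is computed: $\bbS_{n}$ is a contraction with $\bbS_{n}^{n}=0$, and since $e^{i\theta}\bbS_{n}$ is unitarily equivalent to $\bbS_{n}$ via the diagonal unitary $\mathrm{diag}(1,e^{i\theta},\dots,e^{i(n-1)\theta})$, one gets $\omega_{2}(\bbS_{n})=\lambda_{\max}\big(\mathscr{R}e(\bbS_{n})\big)=\cos\frac{\pi}{n+1}$, so the bound is attained. The main obstacle is the matrix inequality of the third paragraph: the reductions above are routine, but organising the telescoping so that the uncontrolled cross terms $\langle T^{k+1}x,T^{k}x\rangle$ for $k\ge1$ cancel — and confirming that the certificate assembled from the sine weights is genuinely positive semidefinite — is the delicate point on which the exact constant $\cos\frac{\pi}{n+1}$ hinges.
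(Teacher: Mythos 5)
Your proposal is sound, and it takes a genuinely different route from the paper: the paper offers no proof of Theorem \ref{Haa} at all — it quotes it from \cite{Haagerup} and, in the remark following Theorem \ref{habibi}, recovers it as the special case $f(z)=z$, $\alpha=0$ of that theorem, i.e.\ from the Badea--Cassier constrained von Neumann inequality of Theorem \ref{catalin} combined with the computation $\omega_{2}(S_{n})=\lambda_{\max}(\mathscr{R}e(S_{n}))=\cos\frac{\pi}{n+1}$. Your argument is the direct, elementary one in the spirit of the original positive-definite-kernel proof: it is self-contained and shows exactly where the constant comes from, whereas the paper's machinery buys the far stronger Schwarz--Pick statement of Theorem \ref{habibi} at the price of importing Theorem \ref{catalin}. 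Two remarks confirming that your plan closes. First, your reduction is lossless: conversely, any positive semidefinite $G$ with $S_{n}^{\ast}GS_{n}\leq G$ is the Gram matrix of $(x,Tx,\dots,T^{n-1}x)$ for some contraction $T$ with $T^{n}=0$ (define $T\xi_{k}=\xi_{k+1}$ on the span of the $\xi_{k}$, which is well defined and contractive precisely because $S_{n}^{\ast}GS_{n}\leq G$, and extend by $0$), so the finite matrix statement you isolate is equivalent to the theorem. Second, the telescoping you flag as the delicate point does work, and more simply than by testing against the sine vector and its shift: since $S_{n}^{n}=0$, setting $D=G-S_{n}^{\ast}GS_{n}\geq0$ gives $G=\sum_{j=0}^{n-1}S_{n}^{\ast j}DS_{n}^{j}$, whence $G_{00}=\operatorname{tr}(D)$ and $\mathscr{R}e\,G_{10}=\mathscr{R}e\operatorname{tr}(S_{n}^{\ast}D)=\operatorname{tr}\big(\mathscr{R}e(S_{n})D\big)$, so that
$$cG_{00}-\mathscr{R}e\,G_{10}=\operatorname{tr}\Big(\big(cI_{n}-\mathscr{R}e(S_{n})\big)D\Big)\geq0,$$
because $cI_{n}-\mathscr{R}e(S_{n})\geq0$ (the sine eigenvectors enter only through $\lambda_{\max}(\mathscr{R}e(S_{n}))=c$) and the trace of a product of two positive semidefinite matrices is nonnegative. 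With that identity in place, your rotation argument, the homogeneity in $\lVert T\rVert$, and your treatment of the equality clause for $T$ unitarily equivalent to $S_{n}$ are all correct.
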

Apparently, there is not a relationship between the Haagerup and de la Harpe result and the von Neumann inequalities. But, their 
inequality states that if $u_{n}(z)=z^{n}$ and $u_{n}(T)=0$, then $\omega_{2}(T)\leq w_{2}(\bbs\lvert Ker~u_{n}({\bbS}^{\ast}))$.  In \cite{Cassier}, C. Badea and G. Cassier obtained constrained von Neumann inequalities which allow to see the previous result
as a corollary. Here is a simplified version of a constrained von Neumann inequality that we find in their article \cite {Cassier}. 
\begin{theorem}[\cite{Cassier}]\label{catalin}
\textit{ Let $T\in\mathscr{B}(\mathcal{H})$ be a contraction of class $C_{0}$ with $u(T)=0$, where $u$ be an inner function and let
 $f$ be in $\mathbb{A(D)}$.
Then $$w_{\rho}(f(T))\leq w_{\rho}(f(S(u))).$$ }
\end{theorem}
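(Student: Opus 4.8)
The plan is to recast the $\rho$-numerical radius through the classes $C_\rho$ of Sz.-Nagy and Foias and to show that the model operator $S(u)$ dominates $T$ class by class. Recall that $A\in\mathscr{B}(\mathcal H)$ belongs to $C_\rho$ when it admits a unitary $\rho$-dilation, i.e. there is a Hilbert space $K\supseteq\mathcal H$ and a unitary $U$ on $K$ with $A^{n}=\rho\,P_{\mathcal H}U^{n}|_{\mathcal H}$ for every $n\ge1$; by definition $w_\rho(A)=\inf\{t>0:\ A/t\in C_\rho\}$, so that $w_\rho$ is homogeneous and $w_\rho(A)\le1$ exactly when $A\in C_\rho$. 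Consequently the asserted inequality is equivalent to the implication: for every $t>0$, if $f(S(u))/t\in C_\rho$ then $f(T)/t\in C_\rho$. Indeed, writing $t_0=w_\rho(f(S(u)))$ and noting $(f/t)(S(u))=f(S(u))/t$, this implication gives $w_\rho(f(T))\le t$ for all $t>t_0$, hence $w_\rho(f(T))\le t_0$. Since $f/t\in\mathbb{A(D)}$ whenever $f$ is, I may absorb $t$ into $f$, and it suffices to prove: if $f(S(u))\in C_\rho$ then $f(T)\in C_\rho$.

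First I would record two stability features of $C_\rho$. Ampliation preserves the class: if $U$ is a unitary $\rho$-dilation of $A$ on $K\supseteq\mathcal H$, then $U\otimes I_{\mathcal E}$ is a unitary $\rho$-dilation of $A\otimes I_{\mathcal E}$ on $K\otimes\mathcal E$, so $A\in C_\rho$ forces $A\otimes I_{\mathcal E}\in C_\rho$. Compression to a semi-invariant subspace also preserves the class: if $M=M_1\ominus M_2$ with $M_2\subseteq M_1\subseteq\mathcal H$ both invariant for $A$, then Sarason's lemma gives $B^{n}=P_M A^{n}|_M$ for $B:=P_M A|_M$, and applying $P_M$ to $A^{n}=\rho\,P_{\mathcal H}U^{n}|_{\mathcal H}$ (using $M\subseteq\mathcal H$) yields $B^{n}=\rho\,P_M U^{n}|_M$, so $B\in C_\rho$. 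Since $M_1,M_2$ are automatically invariant for every element of the norm-closed algebra generated by $A$, the same conclusion holds with $A$ replaced by $f(A)$ for $f\in\mathbb{A(D)}$.

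The heart of the argument is the model-theoretic input: realizing $T$ as such a compression of an ampliation of $S(u)$. Because $T$ is a c.n.u. contraction of class $C_0$ with $u(T)=0$ and $u$ scalar inner, Sz.-Nagy--Foias model theory provides a Hilbert space $\mathcal E$ and a semi-invariant subspace $M$ of $H(u)\otimes\mathcal E$ such that $T$ is unitarily equivalent to $P_M\bigl(S(u)\otimes I_{\mathcal E}\bigr)|_M$; this is exactly the embedding that underlies the Sz.-Nagy and Ptak--Young norm inequalities quoted above, the point being that the characteristic function of $T$ is divided by the inner function $u\,I_{\mathcal E}$. As the analytic functional calculus is compatible with compression to a semi-invariant subspace, one then gets $f(T)=P_M\bigl(f(S(u))\otimes I_{\mathcal E}\bigr)|_M$ for every $f\in\mathbb{A(D)}$. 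Assuming $f(S(u))\in C_\rho$, ampliation gives $f(S(u))\otimes I_{\mathcal E}\in C_\rho$, and compression to the semi-invariant subspace $M$ gives $f(T)\in C_\rho$, completing the reduction and hence the proof.

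I expect the genuine obstacle to be the model embedding of the previous paragraph: making precise, for an arbitrary $C_0$ contraction and a scalar inner $u$ with $u(T)=0$, the semi-invariant realization of $T$ inside $H(u)\otimes\mathcal E$ together with the identity $f(T)=P_M\bigl(f(S(u))\otimes I_{\mathcal E}\bigr)|_M$. Everything else — the reduction via homogeneity of $w_\rho$ and the two stability lemmas for $C_\rho$ — is elementary dilation bookkeeping. An alternative that sidesteps the explicit model is to invoke the completely contractive $H^{\infty}/uH^{\infty}$ functional calculus $h(S(u))\mapsto h(T)$ (the operator-algebra form of the cited Sz.-Nagy theorem) together with Paulsen's representation of unital completely contractive homomorphisms as compressions of $*$-representations; one then verifies that this compression is of the semi-invariant type above and applies the same two stability lemmas.
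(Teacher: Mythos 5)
The paper does not prove this statement at all --- it is quoted verbatim from Badea and Cassier \cite{Cassier} as background --- so there is no in-paper argument to compare yours against; I will assess the proposal on its own terms. Your reduction and the two stability lemmas are sound: $w_{\rho}$ is homogeneous, $w_{\rho}(A)\leq 1$ exactly when $A\in C_{\rho}$, so the inequality is equivalent to the implication $f(S(u))\in C_{\rho}\Rightarrow f(T)\in C_{\rho}$; tensoring a unitary $\rho$-dilation with $I_{\mathcal{E}}$ preserves $C_{\rho}$, and so does compression to a semi-invariant subspace $M\subseteq\mathcal{H}$ by Sarason's lemma together with $P_{M}P_{\mathcal{H}}=P_{M}$, with the $\mathbb{A}(\mathbb{D})$-calculus passing through such compressions by polynomial approximation and von Neumann's inequality. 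The load-bearing step is, as you yourself flag, the realization $T\cong P_{M}\bigl(S(u)\otimes I_{\mathcal{E}}\bigr)|_{M}$. This is a genuine theorem of Sz.-Nagy--Foias model theory: a $C_{0}$ contraction has a two-sided inner characteristic function $\Theta_{T}$, and $u(T)=0$ is equivalent to $uH^{2}(\mathcal{E})\subseteq\Theta_{T}H^{2}(\mathcal{E})$, i.e.\ to $\Theta_{T}$ being a left inner \emph{divisor} of $uI_{\mathcal{E}}$, say $uI_{\mathcal{E}}=\Theta_{T}\Omega$; then $H(\Theta_{T})=H(uI_{\mathcal{E}})\ominus\Theta_{T}H(\Omega)$ with $\Theta_{T}H(\Omega)$ invariant for $S(u)\otimes I_{\mathcal{E}}$, so $M=H(\Theta_{T})$ is co-invariant and the compression of $S(u)\otimes I_{\mathcal{E}}$ to it is $S(\Theta_{T})\cong T$. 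Be careful with one point of wording: you wrote that the characteristic function of $T$ ``is divided by'' $uI_{\mathcal{E}}$, which is the divisibility the wrong way around --- it would place $H(uI_{\mathcal{E}})$ inside $H(\Theta_{T})$ and reverse the inequality; what you need (and what you in fact use) is that $\Theta_{T}$ divides $uI_{\mathcal{E}}$. With that input supplied, your proof is complete and correct. It is also a genuinely different mechanism from the one in \cite{Cassier}, where the constrained inequality for $w_{\rho}$ is derived from positivity properties of operator-valued Poisson-type kernels characterizing the classes $C_{\rho}$; your dilation-theoretic route is more transparent once the functional model is granted, while theirs avoids the model machinery and adapts to a wider family of functionals.
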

Here $w_{\rho}(T)$ denote the $\rho$-numerical radius of an operator $T$. Moreover, we shall use this version later in section \ref{dd} to give the second main result (Theorem \ref{habibi}) of this paper 
which consists on a sharpened Schwarz-Pick operatorial inequality for nilpotent operators.

\hspace{0.5cm}Before proceeding further, I would like to express my gratitude to Gilles Cassier for his help, suggestions and his good advices.

\section{The numerical radius of the truncated Shift $S(\phi)$ where $\phi$ is a Blaschke product with unique zero}
\subsection{Preliminary}
\hspace{0.5cm}For any $n$-by-$n$ matrix $T$ and real $\theta\in [0,2\pi[$ let $L$ be the supporting line of the convex set $W(T)$ perpendicular 
to the ray which emanates from the origin and forms angle $\theta$ from the positive $x$-axis and let 
 $\lambda=\lambda(\theta)$ be the signed distance from the origin to the line $L$. It is easily seen that $\lambda(\theta)$ is the 
 largest eigenvalue of  the Hermitian matrix 
$\mathscr{R}e(e^{-i\theta}T):$

$$D_{n}(\lambda,\theta)= \mbox{det}(\mathscr{R}e(e^{-i\theta}T-\lambda I_{n})) = 0.$$

Note that $\lambda(\theta)$ is taken with “$+$” if the origin is not separated from $\partial W(T)$ by the line, and taken 
with  “$-$”  otherwise.

The equation of the boundary $\partial W(T)$ of the numerical range of $T$ is defined by the maximum eigenvalue 
$\lambda=\lambda(\theta)$ of 
$$D_{n}(\lambda,\theta)= \mbox{det}(\mathscr{R}e(e^{-i\theta}T-\lambda I_{n})) = 0.$$

Namely, let the points $(x(\theta),y(\theta))$ of $\partial W(T)$  be parametrised by the angles
$0\leq\theta < 2\pi$ between the straight lines of support L and $x$-axis (See Figures 1 and 2).

$\partial W(T)$ is the envelope of chords
$x(\theta) \cos\theta + y(\theta) \sin \theta = \lambda(\theta)$.
Then \cite{Mirman2}, \cite{Kip}:

$$x=x(\theta)=\lambda(\theta)\cos\theta-\lambda'(\theta)\sin\theta$$
and
$$y=y(\theta)=\lambda(\theta)\sin\theta+\lambda'(\theta)\cos\theta$$

The derivative $\lambda'(\theta) $ is determined for the so-called"regular arcs" of $\partial W(T)$ . 
If $(x(\theta),y(\theta))$ is a point of a regular arc, then
$\lambda''(\theta)+\lambda'(\theta) $ is the radius of curvature of the arc at this point. A regular arc
of $\partial W(T)$ contains neither corner points nor straight line segments.
\begin{theorem}\label{hayka}
\textit{ If $T$ is in $\Upsilon_{n}$, then  $\partial W(T)$ is a regular arc and $\lambda(\theta) $ is differentiable for all
 $\theta\in [0.2\pi[$.}
\end{theorem}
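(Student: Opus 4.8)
The plan is to exploit the invariance of the class $\Upsilon_{n}$ under multiplication by unimodular scalars together with the simplicity statement in Corollary~\ref{ter2}. First I would observe that for every real $\theta$ the operator $e^{-i\theta}T$ again belongs to $\Upsilon_{n}$: indeed $\|e^{-i\theta}T\|=\|T\|\leq 1$, the eigenvalues of $e^{-i\theta}T$ are those of $T$ rotated by $e^{-i\theta}$ so none has modulus $1$, the operator stays completely non-unitary, and $I-(e^{-i\theta}T)^{\ast}(e^{-i\theta}T)=I-T^{\ast}T$ has rank $1$. Applying the third item of Corollary~\ref{ter2} to $e^{-i\theta}T$ then shows that $\mathscr{R}e(e^{-i\theta}T)=\cos\theta\,\mathscr{R}e(T)+\sin\theta\,\mathscr{I}m(T)$ has only simple eigenvalues, for \emph{every} $\theta\in[0,2\pi[$. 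In particular its largest eigenvalue $\lambda(\theta)$ is always a simple root of the characteristic equation $D_{n}(\lambda,\theta)=0$.

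Next I would upgrade simplicity to differentiability. The map $\theta\mapsto\mathscr{R}e(e^{-i\theta}T)$ is a real-analytic (indeed trigonometric-polynomial) family of Hermitian matrices, so $D_{n}(\lambda,\theta)$ is a monic degree-$n$ polynomial in $\lambda$ whose coefficients are real-analytic in $\theta$. Because $\lambda(\theta)$ is a simple root, $\partial D_{n}/\partial\lambda\,(\lambda(\theta),\theta)\neq 0$, and the analytic implicit function theorem produces, near each $\theta_{0}$, a real-analytic branch of roots agreeing with the top eigenvalue. Hence $\lambda(\theta)$ is real-analytic, in particular differentiable and even of class $C^{2}$, on all of $[0,2\pi[$. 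This already yields the second assertion of the theorem and makes the parametrisation $x(\theta)=\lambda(\theta)\cos\theta-\lambda'(\theta)\sin\theta$, $y(\theta)=\lambda(\theta)\sin\theta+\lambda'(\theta)\cos\theta$ legitimate everywhere, with $(x'(\theta),y'(\theta))=(\lambda(\theta)+\lambda''(\theta))(-\sin\theta,\cos\theta)$.

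It then remains to verify that $\partial W(T)$ is a regular arc, i.e. free of straight line segments and of corner points. The absence of line segments is exactly the content of the fourth item of Corollary~\ref{ter2}; equivalently, it is the geometric counterpart of the differentiability of the support function $\lambda$ just established. For corner points I would argue that a vertex of the convex body $W(T)$ would be the common contact point of a whole arc of supporting lines, forcing the envelope point $(x(\theta),y(\theta))$ to stay constant on a $\theta$-interval, i.e. $\lambda(\theta)+\lambda''(\theta)\equiv 0$ there. Since $\lambda+\lambda''$ is real-analytic, it would then vanish identically, so $\lambda(\theta)=a\cos\theta+b\sin\theta$ and $W(T)$ would degenerate to the single point $(a,b)$. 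This is impossible for $n\geq 2$, because $\mathrm{Re}\,W(T)=W(\mathscr{R}e(T))=[\mu_{\min},\mu_{\max}]$ is a nondegenerate segment, the eigenvalues of $\mathscr{R}e(T)$ being simple and hence distinct. Thus no corner points occur, the tangent direction $(-\sin\theta,\cos\theta)$ turns continuously, and $\partial W(T)$ is a regular arc.

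The conceptual heart of the argument is the first step: the rotation-invariance of $\Upsilon_{n}$ lets one promote the single-angle simplicity of Corollary~\ref{ter2} to simplicity of the top eigenvalue at every angle. The main technical obstacle is the passage from simplicity to genuine differentiability of $\lambda(\theta)$; this is where analytic perturbation theory, equivalently the implicit function theorem applied to $D_{n}(\lambda,\theta)$, is indispensable, since for a non-simple eigenvalue the function $\lambda(\theta)$ could fail to be differentiable precisely at the crossing angles. Some care is also needed to exclude points of vanishing radius of curvature as genuine corners, which the analyticity of $\lambda+\lambda''$ handles.
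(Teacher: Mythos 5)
Your argument is correct, and its opening move --- observing that $\Upsilon_{n}$ is invariant under multiplication by unimodular scalars, so that the third item of Corollary~\ref{ter2} yields simplicity of all eigenvalues of $\mathscr{R}e(e^{-i\theta}T)$ for \emph{every} $\theta$ --- is exactly the paper's. Where you diverge is in how the absence of corner points is established. The paper stays inside the Poncelet/dilation machinery: it takes a unitary dilation $U$ of $T$ on an $(n+1)$-dimensional space, notes via Theorem~\ref{ter1} that $U$ has $n+1$ distinct eigenvalues, writes $T=QUQ$ with $Q=I_{n+1}-e_{n+1}\otimes e_{n+1}$, and then combines Lemma~\ref{Mirmana2} (a corner point is a normal eigenvalue) with Lemma~\ref{Mirmana3} (such a $T$, having no eigenvalue of modulus one, has no normal eigenvalues) before invoking Proposition~\ref{Mirmana1}. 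You instead upgrade simplicity to real-analyticity of the support function $\lambda(\theta)$ via the implicit function theorem applied to $D_{n}(\lambda,\theta)$, and exclude corners by a rigidity argument: a corner forces $\lambda(\theta)=x_{0}\cos\theta+y_{0}\sin\theta$ on a nondegenerate interval, hence everywhere by analyticity, collapsing $W(T)$ to a point and contradicting the simplicity of the $n\geq 2$ eigenvalues of $\mathscr{R}e(T)$; line segments you dispose of by quoting the fourth item of Corollary~\ref{ter2}. Your route buys an explicit, self-contained proof of the differentiability (indeed analyticity) of $\lambda$, which the paper leaves implicit in the regular-arc formalism, and it bypasses Lemmas~\ref{Mirmana2} and \ref{Mirmana3} and the dilation altogether; the paper's route is shorter on the page because it outsources the convex-geometric work to Mirman's lemmas, and it yields the extra structural fact that $T$ has no normal eigenvalues. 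One small remark: since differentiability of the support function of a convex body already forces each supporting line to meet $W(T)$ in a single point, your analyticity step also rules out boundary segments directly, so the appeal to the fourth item of Corollary~\ref{ter2} is a convenience rather than a necessity.
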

We will prove Theorem \ref{hayka} through a series of lemmas and propositions, the first of which concerns a property
 of a regular arc.

\begin{proposition}[\cite{Mirman1} Proposition 2]\label{Mirmana1}
\textit{ If a connected subset of $\partial W(T)$ does not contain corner points and,
for all points of this subset, $\lambda(\theta) $ is a simple eigenvalue of $\mathscr{R}e(e^{-i\theta}T)$, then this
subset is a regular arc of $\partial W(T)$.}
\end{proposition}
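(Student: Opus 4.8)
The plan is to parametrize the arc by the outward normal angle $\theta$ and to exploit the fact that $\lambda(\theta)$, being the largest eigenvalue of the Hermitian family $A(\theta):=\mathscr{R}e(e^{-i\theta}T)=\tfrac12(e^{-i\theta}T+e^{i\theta}T^{\ast})$, is exactly the support function of the convex set $W(T)$. Indeed, for every unit vector $x$ one has $\mathscr{R}e(e^{-i\theta}\langle Tx,x\rangle)=\langle A(\theta)x,x\rangle\leq\lambda(\theta)$, with equality precisely when $x$ lies in the eigenspace $E_{\lambda(\theta)}$ of the top eigenvalue. Hence the points of $\partial W(T)$ lying on the supporting line $L$ in direction $\theta$ are exactly the points $\langle Tx,x\rangle$ with $x$ a unit vector of $E_{\lambda(\theta)}$, which is the observation that ties the eigenstructure of $A(\theta)$ to the local geometry of the boundary.

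First I would use the simplicity hypothesis to rule out straight line segments. If $\lambda(\theta)$ is a simple eigenvalue, then $E_{\lambda(\theta)}$ is one-dimensional, so the maximizing vector $u(\theta)$ is unique up to a unimodular factor and $L$ meets $\partial W(T)$ in the single point $\langle Tu(\theta),u(\theta)\rangle$. A line segment in $\partial W(T)$ with outward normal $e^{i\theta}$ would on the contrary force $L$ to meet the boundary in a nondegenerate segment, hence $\dim E_{\lambda(\theta)}\geq2$, contradicting simplicity. Thus the considered subset carries no line segment; combined with the standing hypothesis that it contains no corner point, the two defects that disqualify a regular arc are excluded.

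Next I would establish the smoothness and the envelope parametrization. Since the entries of $A(\theta)$ are trigonometric polynomials, $\theta\mapsto A(\theta)$ is a real-analytic family of Hermitian matrices. At a point $\theta_{0}$ where $\lambda(\theta_{0})$ is simple, analytic perturbation theory for self-adjoint matrices (Rellich--Kato) furnishes a neighbourhood of $\theta_{0}$ on which $\lambda(\theta)$ stays simple, is real-analytic, and carries a real-analytic normalized eigenvector $u(\theta)$. As simplicity holds at every point of the subset by hypothesis, $\lambda$ is real-analytic there, so $\lambda'(\theta)$ and $\lambda''(\theta)$ exist and are continuous. The first-order formula $\lambda'(\theta)=\langle A'(\theta)u(\theta),u(\theta)\rangle$ shows $\lambda'$ is well-determined, and substitution into $x(\theta)=\lambda(\theta)\cos\theta-\lambda'(\theta)\sin\theta$ and $y(\theta)=\lambda(\theta)\sin\theta+\lambda'(\theta)\cos\theta$ presents the subset as a real-analytic arc along which the radius of curvature $\lambda(\theta)+\lambda''(\theta)$ is defined. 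This is precisely the assertion that the subset is a regular arc of $\partial W(T)$.

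The main obstacle I anticipate is the clean justification of the perturbation step: one must ensure that simplicity at a single $\theta_{0}$ genuinely propagates to real-analyticity of $\lambda$ and $u$ on a full neighbourhood and that the eigenvector can be normalized analytically without a monodromy obstruction, which is exactly where the Rellich--Kato theory is indispensable. A secondary point worth stressing is why both hypotheses are needed and are independent: a corner point of $W(T)$ is compatible with $\lambda(\theta)$ simple, since the supporting line still meets the boundary only in the single corner, so the maximizer stays unique and simplicity alone cannot exclude corners; conversely, simplicity is what excludes the dual phenomenon of a flat edge. Banning both is what upgrades a mere connected boundary subset to a regular arc.
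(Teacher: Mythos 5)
The paper does not prove this proposition at all: it is quoted verbatim from Mirman's article (\cite{Mirman1}, Proposition 2) and used as a black box in the proof of Theorem \ref{hayka}, so there is no in-paper argument to compare against. Your proof is correct and follows exactly the standard route underlying Mirman's result --- simplicity of the top eigenvalue of the Hermitian family $\mathscr{R}e(e^{-i\theta}T)$ kills flat segments because the supporting line then meets $W(T)$ only at $\langle Tu(\theta),u(\theta)\rangle$, and Rellich--Kato perturbation theory gives real-analyticity of $\lambda(\theta)$ so that the envelope formulas $x(\theta)=\lambda\cos\theta-\lambda'\sin\theta$, $y(\theta)=\lambda\sin\theta+\lambda'\cos\theta$ and the curvature $\lambda+\lambda''$ are defined --- with the only cosmetic omission being an explicit remark that, corners being excluded, the normal-angle map is a continuous bijection from the connected subset onto an interval of $\theta$, so the analytic parametrization really traces the whole subset.
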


\begin{lemma}[\cite{Horn} pp. 50-51]\label{Mirmana2}
\textit{If the boundary of the numerical range of a matrix $T$ contains a corner point $\lambda=<Tu,u>$, 
then $\lambda$ is a normal eigenvalue of $T$: $$Tu=\lambda u ~~\mbox{and}~~T^{\ast}u=\overline{\lambda}u.$$}
\end{lemma}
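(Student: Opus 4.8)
The plan is to prove the statement directly: from a corner point I extract two distinct supporting lines and convert the resulting semidefiniteness into the two eigenvector equations. First I would normalize by translation, replacing $T$ with $T-\lambda I$. This sends the corner point $\lambda$ to the origin, turns the condition $\langle Tu,u\rangle=\lambda$ into $\langle (T-\lambda I)u,u\rangle=0$, and turns the desired conclusions $Tu=\lambda u$, $T^{*}u=\overline{\lambda}u$ into $(T-\lambda I)u=0$, $(T-\lambda I)^{*}u=0$. So it suffices to treat $\lambda=0$: show that if $0$ is a corner point of $W(T)$ attained at a unit vector $u$ with $\langle Tu,u\rangle=0$, then $Tu=0$ and $T^{*}u=0$.

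The geometric input is that a corner point of a planar convex set has a nondegenerate normal cone. Since $0\in\partial W(T)$ is a corner, there exist two non-parallel outward normal directions $e^{i\theta_{1}}$ and $e^{i\theta_{2}}$, with $\theta_{1}\not\equiv\theta_{2}\pmod{\pi}$, whose supporting lines both pass through $0$; hence $\mathscr{R}e(e^{-i\theta_{j}}z)\leq 0$ for every $z\in W(T)$ and each $j=1,2$. Writing $z=\langle Tx,x\rangle$ and using the identity $\mathscr{R}e(e^{-i\theta_{j}}\langle Tx,x\rangle)=\langle \mathscr{R}e(e^{-i\theta_{j}}T)x,x\rangle$, this says exactly that the Hermitian matrices $\mathscr{R}e(e^{-i\theta_{j}}T)$ are negative semidefinite. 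Evaluating at $x=u$ gives $\langle \mathscr{R}e(e^{-i\theta_{j}}T)u,u\rangle=\mathscr{R}e(e^{-i\theta_{j}}\cdot 0)=0$. A unit vector realizing the value $0$ on a negative semidefinite Hermitian matrix must lie in its kernel: factoring $-\mathscr{R}e(e^{-i\theta_{j}}T)=C_{j}^{*}C_{j}$ turns the equality into $\lVert C_{j}u\rVert^{2}=0$, whence $\mathscr{R}e(e^{-i\theta_{j}}T)u=0$ for $j=1,2$.

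Finally I would convert these two identities into a $2\times 2$ linear system in the unknown vectors $Tu$ and $T^{*}u$. Since $\mathscr{R}e(e^{-i\theta_{j}}T)=\tfrac12(e^{-i\theta_{j}}T+e^{i\theta_{j}}T^{*})$, the relations read $e^{-i\theta_{1}}Tu+e^{i\theta_{1}}T^{*}u=0$ and $e^{-i\theta_{2}}Tu+e^{i\theta_{2}}T^{*}u=0$. The determinant of the coefficient matrix is $e^{-i\theta_{1}}e^{i\theta_{2}}-e^{i\theta_{1}}e^{-i\theta_{2}}=2i\sin(\theta_{2}-\theta_{1})$, which is nonzero precisely because $\theta_{1}\not\equiv\theta_{2}\pmod{\pi}$; therefore $Tu=0$ and $T^{*}u=0$. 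Undoing the translation via $T\mapsto T+\lambda I$ then yields $Tu=\lambda u$ and $T^{*}u=\overline{\lambda}u$, as claimed.

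I expect the only delicate point to be the justification that a corner point genuinely supplies two non-parallel supporting lines, equivalently a normal cone of angular width in $(0,\pi)$; this is an elementary fact of planar convexity, and once it is in hand the remaining algebra — the passage to negative semidefiniteness, the kernel step, and the solution of the $2\times 2$ system — is entirely routine.
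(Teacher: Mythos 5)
Your proof is correct. The paper itself offers no proof of this lemma --- it is imported verbatim from Horn and Johnson (pp.\ 50--51) --- and your argument (translate the corner to the origin, extract two supporting lines with non-parallel normals $e^{i\theta_{1}},e^{i\theta_{2}}$, deduce that each $\mathscr{R}e(e^{-i\theta_{j}}T)$ is negative semidefinite and annihilates $u$ via the $C_{j}^{*}C_{j}$ factorization, then solve the $2\times 2$ system with determinant $2i\sin(\theta_{2}-\theta_{1})\neq 0$) is essentially the standard argument given in that reference, with the one genuinely nontrivial ingredient --- that a corner point has a normal cone of positive angular width --- correctly identified and correctly invoked.
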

\begin{lemma}[\cite{Mirman1} Lemma 1]\label{Mirmana3}
\textit{If $U$ is an $(n+1)\times(n+1)$ unitary matrix with distinct eigenvalues, $Q=I_{n+1}-w\otimes w, \rVert w\rVert=1, n>1$ 
and $T=QUQ$ then the following assertions are equivalent.}
\begin{enumerate}
\item \textit{$w_{2}(T)<1$.} 
\item \textit{If $\lambda$ is an eigenvalue of $T$, then $|\lambda|<1$.}
\item \textit{$<w,u>\neq0$ for any eigenvector $u$ of $U$.}
\item \textit{The subspace $\mathcal{L}=Q\mathcal{H}_{n+1}$ contains no eigenvectors of $U$.}
\item \textit{$T$ does not have normal eigenvalues.}
\end{enumerate}
\end{lemma}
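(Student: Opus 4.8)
The plan is to establish the five assertions as a single chain of implications, $(1)\Rightarrow(2)\Rightarrow(3)\Leftrightarrow(4)\Rightarrow(1)$ together with $(2)\Rightarrow(5)\Rightarrow(4)$, so that all five become equivalent. First I would record two costless reductions. Writing $\mathcal{L}=Q\mathcal{H}_{n+1}=\{w\}^{\perp}$, an $n$-dimensional subspace on which the orthogonal projection $Q$ acts as the identity, the equivalence $(3)\Leftrightarrow(4)$ is immediate, since an eigenvector $u$ of $U$ lies in $\mathcal{L}$ exactly when $\langle u,w\rangle=0$, and $\langle w,u\rangle\neq0\Leftrightarrow\langle u,w\rangle\neq0$. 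Likewise $(1)\Rightarrow(2)$ is immediate, because the eigenvalues of the finite matrix $T$ lie in the compact set $W(T)$, whence $|\lambda|\le w_{2}(T)<1$.

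The structural input I would extract next is the defect operator. A short computation, valid for $x\in\mathcal{L}$ (so that $Qx=x$ and $Tx=QUx$), gives
\[
(I-T^{\ast}T)x=\langle Ux,w\rangle\,QU^{\ast}w=\langle x,g\rangle\,g,\qquad g:=QU^{\ast}w\in\mathcal{L},
\]
so that $I-T^{\ast}T=g\otimes g$ is positive of rank at most one on $\mathcal{L}$; this is the only place the codimension-one nature of $Q$ really enters, and it is the engine of the argument. With it in hand the decisive implication $(4)\Rightarrow(1)$ is quick: one always has $w_{2}(T)\le\lVert T\rVert\le1$, and if $w_{2}(T)=1$ then by compactness of $W(T)$ there is a unit vector $x$ with $|\langle Tx,x\rangle|=1$; since $\langle Tx,x\rangle=\langle UQx,Qx\rangle=\langle Ux_{0},x_{0}\rangle$ with $x_{0}=Qx$ and $\lVert x_{0}\rVert\le1$, the equality case of the Cauchy--Schwarz inequality forces $\lVert x_{0}\rVert=1$ and $Ux_{0}=\gamma x_{0}$, i.e. $\mathcal{L}$ contains the eigenvector $x_{0}$ of $U$, contradicting $(4)$. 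The remaining arrow $(2)\Rightarrow(3)$ is the same observation read backwards: an eigenvector $u\in\mathcal{L}$ of $U$ satisfies $Tu=\nu u$ with $|\nu|=1$, an eigenvalue of $T$ of modulus one, which $(2)$ forbids.

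It remains to fold in assertion $(5)$. For $(5)\Rightarrow(4)$ I would argue contrapositively: if $u\in\mathcal{L}$ is a unit eigenvector of $U$, say $Uu=\nu u$ with $|\nu|=1$, then $U^{\ast}u=\bar\nu u$, whence $Tu=\nu u$ and $T^{\ast}u=\bar\nu u$, exhibiting a normal eigenvalue and contradicting $(5)$. The genuinely delicate direction is $(2)\Rightarrow(5)$, and this is where I expect the main obstacle, since one must rule out normal eigenvalues of modulus strictly less than one, about which hypothesis $(2)$ says nothing directly. Here the rank-one defect saves the day: a normal eigenvalue $\lambda$ produces a reducing subspace $\mathrm{span}(u)$, so $T=\lambda\oplus T'$ on $\mathcal{L}=\mathrm{span}(u)\oplus\mathcal{M}$ and $I-T^{\ast}T=(1-|\lambda|^{2})\oplus(I_{\mathcal{M}}-T'^{\ast}T')$; if $|\lambda|<1$ the first summand already has rank one, forcing $T'^{\ast}T'=I_{\mathcal{M}}$, so $T'$ is an isometry, hence unitary, on the space $\mathcal{M}$ of dimension $n-1\ge1$. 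Then $T'$ carries an eigenvalue of modulus one, which is an eigenvalue of $T$ of modulus one and contradicts $(2)$, while the case $|\lambda|=1$ is excluded by $(2)$ outright; this is precisely where the standing hypothesis $n>1$ is used. Closing the cycle $(4)\Rightarrow(1)\Rightarrow(2)\Rightarrow(5)\Rightarrow(4)$ together with $(2)\Rightarrow(3)\Leftrightarrow(4)$ then yields the equivalence of all five statements.
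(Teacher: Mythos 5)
Your proof is correct. Note first that the paper itself gives no proof of this lemma: it is quoted verbatim as Lemma~1 of Mirman's article \cite{Mirman1}, so there is nothing internal to compare against; your argument is a genuine self-contained reconstruction, and its engine --- the identity $I-T^{\ast}T=g\otimes g$ with $g=QU^{\ast}w$ on $\mathcal{L}$, which I have checked --- is exactly the rank-one defect that places $T$ in the class $\Upsilon_{n}$ around which the paper is built, so the argument sits naturally in the paper's framework. All the individual implications are sound: $(3)\Leftrightarrow(4)$ and $(1)\Rightarrow(2)$ are indeed immediate, the Cauchy--Schwarz equality case correctly yields $(4)\Rightarrow(1)$, and the reducing-subspace decomposition $I-T^{\ast}T=(1-|\lambda|^{2})\oplus(I_{\mathcal{M}}-T'^{\ast}T')$ combined with the rank bound correctly handles the only delicate arrow $(2)\Rightarrow(5)$, with $n>1$ entering precisely where you say it does. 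One small point worth making explicit: throughout you must (and implicitly do) regard $T=QUQ$ as an operator on $\mathcal{L}=Q\mathcal{H}_{n+1}$ rather than on $\mathcal{H}_{n+1}$, since on the full space $Tw=T^{\ast}w=0$ makes $0$ a normal eigenvalue and assertion $(5)$ would be vacuously false; stating this convention at the outset would remove the only ambiguity. You also never use the hypothesis that $U$ has distinct eigenvalues --- that is not an error (a repeated eigenvalue forces an eigenvector into the codimension-one subspace $\mathcal{L}$, so all five assertions fail simultaneously and the equivalence survives), but it is worth a remark that your proof is in fact slightly more general than the statement.
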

\begin{proof}[Proof of Theorem \ref{hayka}]
Let $T$ in $\mathscr{B}(\mathcal{H})$ and dim$\mathcal{H}=n$. If $T$ is in $\Upsilon_{n}$, so is $e^{-i\theta}T$ for any real $\theta$. Hence the eigenvalues of 
$\mathscr{R}e(e^{-i\theta}T)$ are all distinct by Corollary \ref{ter2}. Let $U$ an $(n+1)\times(n+1)$ unitary dilatation of $T$ 
on a the space $\mathcal{H}_{n+1}$ of dimension $n+1$
which contains $\mathcal{H}$, there is an orthonormal basis $\{e_{1},\dots,e_{n+1}\}$ of $\mathcal{H}_{n+1}$ such that 
$\{e_{1},\dots,e_{n}\}$ forms a basis of $\mathcal{H}$. let $U$ have this matrix representation with respect to the basis
$\{e_{1},\dots,e_{n+1}\}$:
$$
\left(
\begin{array}{cc}
   T & a \\
  
   b & c
\end{array}
\right),$$
where $a, b\in \mathbb{C}^{n}$ and $c\in \mathbb{C}.$
By Theorem \ref{ter1}, the $n+ 1$ eigenvalues of U are distinct. On the other hand, we can easily check that if
$Q=I_{n+1}-e_{n+1}\otimes e_{n+1}$, then $QUQ=T$. An application of Proposition \ref{Mirmana1}, Lemma \ref{Mirmana2}
 and Lemma \ref{Mirmana3} completes the proof of the theorem.
\end{proof}

\begin{proposition}\label{nika1}
\textit{Let $T\in\mathscr{B}(\mathcal{H})$ with a regular boundary arc. One has:
$$\omega_{2}(T)=\sup\{\lambda(\theta),~ 0\leq\theta < 2\pi\}.$$}
\end{proposition}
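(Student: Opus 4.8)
The plan is to recognize $\lambda(\theta)$ as nothing but the support function of the convex set $W(T)$ and then to read off $\omega_{2}(T)$ as the largest value of this support function. First I would record the elementary identity
$$\langle \mathscr{R}e(e^{-i\theta}T)x,x\rangle = \mathscr{R}e\big(e^{-i\theta}\langle Tx,x\rangle\big), \qquad \lVert x\rVert = 1,$$
which shows that the numerical range of the Hermitian operator $\mathscr{R}e(e^{-i\theta}T)$ is exactly the set $\{\mathscr{R}e(e^{-i\theta}w) : w \in W(T)\}$. Since the largest eigenvalue of a Hermitian operator coincides with the supremum of its (real) numerical range, this gives
$$\lambda(\theta) = \sup\{\mathscr{R}e(e^{-i\theta}w) : w \in W(T)\}, \qquad 0 \le \theta < 2\pi,$$
that is, $\lambda(\theta)$ is the value of the support function of $W(T)$ in the direction $e^{i\theta}$. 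The hypothesis that $\partial W(T)$ is a regular arc serves only to ensure, as in \thmref{hayka}, that $\lambda(\theta)$ is the simple top eigenvalue and hence an honest single-valued function of $\theta$; the identity above does not otherwise depend on it.

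Next I would prove the two inequalities. For the bound $\sup_{\theta}\lambda(\theta) \le \omega_{2}(T)$ it suffices to observe that for every $\theta$ and every $w \in W(T)$ one has $\mathscr{R}e(e^{-i\theta}w) \le |e^{-i\theta}w| = |w| \le \omega_{2}(T)$, whence $\lambda(\theta) \le \omega_{2}(T)$ after taking the supremum over $w$ and then over $\theta$. For the reverse inequality I would use compactness of $W(T)$ (in the finite-dimensional setting of the preceding results, or more generally by passing to $\overline{W(T)}$) to pick a point $w_{0}$ with $|w_{0}| = \omega_{2}(T)$; writing $w_{0} = |w_{0}|e^{i\theta_{0}}$ and choosing $\theta = \theta_{0}$ gives $\mathscr{R}e(e^{-i\theta_{0}}w_{0}) = |w_{0}| = \omega_{2}(T)$, so that $\lambda(\theta_{0}) \ge \omega_{2}(T)$. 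Combining the two inequalities yields $\omega_{2}(T) = \sup\{\lambda(\theta) : 0 \le \theta < 2\pi\}$.

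I do not expect a genuine obstacle here: the content is the geometric reinterpretation of the top eigenvalue of $\mathscr{R}e(e^{-i\theta}T)$ as the support function of $W(T)$, after which the equality is the standard fact that the largest modulus attained on a convex set equals the maximum of its support function. The only point requiring mild care is the attainment of the extremal modulus: in the finite-dimensional case $W(T)$ is compact, so $w_{0}$ exists and the supremum over $\theta$ is in fact a maximum; in an infinite-dimensional ambient space one works with $\overline{W(T)}$ and approximating sequences, which leaves the final identity unchanged.
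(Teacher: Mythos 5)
Your proof is correct, and it rests on the same underlying fact as the paper's: the top eigenvalue of the Hermitian operator $\mathscr{R}e(e^{-i\theta}T)$ equals $\sup\{\mathscr{R}e(e^{-i\theta}w): w\in W(T)\}$, i.e.\ the support function of $W(T)$. The execution differs slightly. The paper quotes the standard identity $\omega_{2}(T)=\sup_{\theta}\lVert \mathscr{R}e(e^{-i\theta}T)\rVert$, deduces $\omega_{2}(T)=\sup_{\theta}|\lambda(\theta)|$, and then must argue separately that the absolute value can be dropped (at a maximizing direction the supporting line does not separate the origin from $\partial W(T)$, so $\lambda(\theta)\geq 0$ there). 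You instead prove the support-function identity directly and establish the two inequalities $\sup_{\theta}\lambda(\theta)\leq\omega_{2}(T)$ and $\omega_{2}(T)\leq\sup_{\theta}\lambda(\theta)$ from it; this never introduces $|\lambda(\theta)|$, so the sign discussion disappears, and your second inequality can even be phrased without any compactness or attainment (for each $w\in W(T)$ one has $|w|=\mathscr{R}e(e^{-i\arg w}w)\leq\lambda(\arg w)$, then take suprema). Your observation that the regularity hypothesis plays no real role in the identity itself --- it only guarantees $\lambda(\theta)$ is the simple top eigenvalue, as in Theorem~\ref{hayka} --- is accurate and is implicit rather than stated in the paper. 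In short: same key fact, a marginally more self-contained and sign-free decomposition on your side, a shorter proof relying on a quoted norm identity on the paper's side.
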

\begin{proof}
Recall that for any $T\in\mathscr{B}(\mathcal{H})$ we have
\begin{eqnarray*}
 \omega_{2}(T)&=&\sup\{\rVert \mathscr{R}e(e^{-i\theta}T)\rVert,~ 0\leq\theta < 2\pi\}.
\end{eqnarray*}
If $\partial W(T)$ is a regular arc then 
\begin{eqnarray*}
 \omega_{2}(T)
&=&\sup\{|\lambda(\theta)|,~ 0\leq\theta < 2\pi\}.
\end{eqnarray*}
Now, when the numerical radius is attained, the origin is not separated from $\partial W(T)$ by the line $L$ and necessarily 
$\lambda(\theta)$ is positive.
\end{proof}
\subsection{Relationship between numerical radius of $S(\phi)$ and $\mathscr{R}e(S(\phi))$}

First, we notice some properties for the general case where $\phi$ is a finite Blaschke product: $$\phi(z)= \prod_{j=1}^{n}~\dfrac{z-\alpha_{j}}{1-\overline{\alpha_{j}}z}.$$ For each $\lambda$ in the unit disc, we define the evaluation functional $k_{\lambda} \in \H2$ by the requirement that $f(\lambda)=<f,k_{\lambda}> $. Thus$$k_{\lambda}(z)=\dfrac{1}{1-\overline{\lambda}z}$$ and  $\left\lbrace e_{1}, \dots,e_{n} \right\rbrace $ the collection of functions of $H(\phi)$ defined as follows :
\begin{equation*}
e_{1}(z)= \left(1-\vert\alpha_{1}\vert^{2}\right)^{\frac{1}{2}}  ~\dfrac{1}{1-\overline{\alpha_{1}}z}
\end{equation*}
 and
$$e_{k}(z)= \left(1-\vert\alpha_{k}\vert^{2}\right)^{\frac{1}{2}}~\dfrac{1}{1-\overline{\alpha_{k}}z}~~ \prod_{j=1}^{k-1}\dfrac{z-\alpha_{j}}{1-\overline{\alpha_{j}}z}$$ for any $k=2,...,n$.

It is known that $\left\lbrace e_{1}, \dots,e_{n} \right\rbrace $ is an orthonormal basis of $H(\phi)$ and with respect to this basis the matrice of $\bbs(\phi)$ is given by $\left[ a_{lk} \right]$, where
 $$ a_{lk}=\left\{
    \begin{array}{ll}
         \overline{\alpha_{l}} & \mbox{if } l=k \\ 
         \sigma_{l}\sigma_{l+1}& \mbox{if } k=l+1 \\ 
         \sigma_{l}\sigma_{k} \prod_{j=l+1}^{k-1} (-\alpha_{j}) & \mbox{if } k>l+1\\
        0 & \mbox{unless}
    \end{array}
\right.$$
and $ \sigma_{k}=\left(1-\vert\alpha_{k}\vert^{2}\right)^{\frac{1}{2}}$, for each $1\leqslant k \leqslant n$.
In the sequel of the paper, $\phi$ denotes the finite Blaschke product with unique zero $\alpha$:
$$
\phi(z)=\phi_{\alpha}(z)=\left( \dfrac{z-\alpha}{1-\overline{\alpha}z}\right)^n.
$$
$S^{\ast}(\phi_{\alpha})$ gets the following matricial representation:
$$
\left(
\begin{array}{cccccc}
\overline{\alpha}&\sigma           &-\sigma\alpha &\cdots &\cdots           &\sigma(-\alpha)^{n-2} \\
0                &\overline{\alpha}& \sigma       &\ddots &                 &\vdots \\
\vdots           &       \ddots    & \ddots       &\ddots &\ddots           &\vdots\\
\vdots           &                 & \ddots       &\ddots &\ddots           &-\sigma\alpha\\
\vdots           &                 &              &\ddots &\overline{\alpha}&\sigma\\
0                &       \dots     &  \dots       &\dots  &         0       &\overline{\alpha}
\end{array}
\right)
$$
where $ \sigma=1-\vert \alpha\vert^{2}$.
\begin{proposition}\label{nika}
\textit{For $\alpha\in\mathbb{C}$ and $\lvert\alpha\lvert<1$, one has:}
\begin{enumerate}
\item  \textit{$S^{\ast}(\phi_{\alpha})=(S_{n}^{\ast}+\overline{\alpha}I_{n})(I_{n}+\alpha S_{n}^{\ast})^{-1}.$}
\\
\item  \textit{$W(S^{\ast}(\phi_{\alpha}))=e^{-i \arg(\alpha)}W(S^{\ast}(\phi_{|\alpha|})).$}
\\
\item  \textit{The numerical radius of $S^{\ast}(\phi_{\alpha})$ is independent from the argument of $\alpha$ and 
for $0 \leqslant \alpha<1 $ the numerical range of  $S^{\ast}(\phi_{\alpha})$ is symmetric with respect to the real axis.}
\end{enumerate}
\end{proposition}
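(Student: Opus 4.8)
The plan is to prove part (1) first, since parts (2) and (3) will follow from it together with elementary properties of the numerical range. The key observation for (1) is that the matrix of $S^{\ast}(\phi_{\alpha})$ displayed above is an \emph{upper-triangular Toeplitz matrix}: its main diagonal equals $\overline{\alpha}$ and, for $m\geq 1$, its $m$-th superdiagonal is constant equal to $\sigma(-\alpha)^{m-1}$ with $\sigma=1-|\alpha|^{2}$. Upper-triangular Toeplitz matrices form a commutative algebra isomorphic to $\mathbb{C}[x]/(x^{n})$ via $S_{n}^{\ast}\mapsto x$; in particular $S_{n}^{\ast}$ is nilpotent with $(S_{n}^{\ast})^{n}=0$, so $(I_{n}+\alpha S_{n}^{\ast})^{-1}=\sum_{k=0}^{n-1}(-\alpha)^{k}(S_{n}^{\ast})^{k}$ is a genuine finite inverse. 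Under this identification, establishing the identity in (1) amounts to the single formal computation
\[
(x+\overline{\alpha})\sum_{k=0}^{n-1}(-\alpha)^{k}x^{k}=\frac{x+\overline{\alpha}}{1+\alpha x}=\frac{1}{\alpha}-\frac{\sigma}{\alpha}\cdot\frac{1}{1+\alpha x}\pmod{x^{n}},
\]
from which I would read off the coefficients: the constant term is $\tfrac{1-\sigma}{\alpha}=\overline{\alpha}$ and the coefficient of $x^{m}$ is $-\tfrac{\sigma}{\alpha}(-\alpha)^{m}=\sigma(-\alpha)^{m-1}$. Matching these with the Toeplitz entries above yields $(S_{n}^{\ast}+\overline{\alpha}I_{n})(I_{n}+\alpha S_{n}^{\ast})^{-1}=S^{\ast}(\phi_{\alpha})$.

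For part (2), I would write $\alpha=|\alpha|e^{i\beta}$ with $\beta=\arg(\alpha)$ and introduce the diagonal unitary $U=\mathrm{diag}(1,e^{-i\beta},\dots,e^{-i(n-1)\beta})$. Conjugation by $U$ multiplies the $m$-th superdiagonal of any upper-triangular Toeplitz matrix by $e^{im\beta}$, so using the description of $S^{\ast}(\phi_{\alpha})$ and $S^{\ast}(\phi_{|\alpha|})$ from part (1) a direct check gives
\[
S^{\ast}(\phi_{\alpha})=e^{-i\beta}\,U\,S^{\ast}(\phi_{|\alpha|})\,U^{\ast}.
\]
Since the numerical range is invariant under unitary conjugation and satisfies $W(cT)=cW(T)$ for a scalar $c$, this immediately gives $W(S^{\ast}(\phi_{\alpha}))=e^{-i\beta}W(S^{\ast}(\phi_{|\alpha|}))=e^{-i\arg(\alpha)}W(S^{\ast}(\phi_{|\alpha|}))$.

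Part (3) is then a short corollary. The numerical radius is the supremum of $|z|$ over $z\in W(\cdot)$, and multiplication by the unimodular factor $e^{-i\arg(\alpha)}$ appearing in (2) preserves moduli; hence $\omega_{2}(S^{\ast}(\phi_{\alpha}))=\omega_{2}(S^{\ast}(\phi_{|\alpha|}))$ depends only on $|\alpha|$ and not on the argument of $\alpha$. When $0\leq\alpha<1$ the parameter is real, so $\overline{\alpha}=\alpha$ and by part (1) the matrix $T:=S^{\ast}(\phi_{\alpha})$ has all real entries. Letting $C$ denote entrywise conjugation of vectors, $T$ real means $TC=CT$, whence for any unit vector $x$ one has $\overline{\langle Tx,x\rangle}=\langle T(Cx),Cx\rangle\in W(T)$ because $Cx$ is again a unit vector; thus $\overline{W(T)}=W(T)$, i.e. $W(S^{\ast}(\phi_{\alpha}))$ is symmetric with respect to the real axis.

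I expect the main obstacle to be part (1): once the explicit operator identity is in hand, the rest is routine. The crucial simplification is to recognize the upper-triangular Toeplitz structure and to pass to the commutative algebra $\mathbb{C}[x]/(x^{n})$, which turns a matrix identity into the one-line rational-function expansion above; the only point requiring a little care is to keep track of the phase $e^{im\beta}$ on each superdiagonal in the conjugation step of part (2).
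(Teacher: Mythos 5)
Your proposal is correct, and for part (1) it is essentially the paper's own computation read in the opposite direction: the paper expands the displayed upper-triangular Toeplitz matrix as $\overline{\alpha}I_{n}+\sigma(I_{n}+\alpha S_{n}^{\ast})^{-1}S_{n}^{\ast}$ and recombines, while you start from the rational function $\tfrac{x+\overline{\alpha}}{1+\alpha x}=\tfrac{1}{\alpha}-\tfrac{\sigma}{\alpha(1+\alpha x)}$ in $\mathbb{C}[x]/(x^{n})$ and read off the superdiagonals (the only point worth a parenthetical remark is that this partial-fraction step divides by $\alpha$, so the trivial case $\alpha=0$ should be noted separately). For part (2) the two arguments differ only in language: the paper represents $\langle S^{\ast}(\phi_{\alpha})u,u\rangle$ as an integral against the Toeplitz symbol $\tfrac{e^{-it}+\overline{\alpha}}{1+\alpha e^{-it}}$ and performs the substitution $v_{k}=e^{ik\arg(\alpha)}u_{k}$, which is precisely your conjugation by the diagonal unitary $U$ followed by extraction of the phase $e^{-i\arg(\alpha)}$; your version is more elementary in that it needs no integral representation, at the price of having to track the phase on each superdiagonal by hand, while the paper's symbol calculus is what it reuses later (e.g.\ in Proposition \ref{akouda}). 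The genuine added value of your write-up is part (3): the paper simply cites \cite{Gaaya1}, whereas you give a short self-contained argument that a real matrix satisfies $\overline{\langle Tx,x\rangle}=\langle T\overline{x},\overline{x}\rangle$, hence $\overline{W(T)}=W(T)$; this is correct and makes the proposition independent of the external reference.
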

\begin{proof}
 Here (1) is due to the fact that 
\begin{eqnarray*}
S^{\ast}(\phi_{\alpha})&=& \overline{\alpha} I_{n}+\sigma \sum_{k=0}^{n-2}(-\alpha)^{k}{S^{\ast}_{n}}^{k}\\
                 &=& \overline{\alpha} I_{n}+\sigma \sum_{k=0}^{\infty}(-\alpha)^{k}{S^{\ast}_{n}}^{k}\\
                 &=& \overline{\alpha} I_{n}+\sigma(I_{n}+\alpha S^{\ast}_{n})^{-1}S^{\ast}_{n}\\
                 &=& (S_{n}^{\ast}+\overline{\alpha}I_{n})(I_{n}+\alpha S_{n}^{\ast})^{-1}.
\end{eqnarray*}
The assumption (2) is a consequence of the fact that $S^{\ast}(\phi_{\alpha})$ is the n-Toeplitz matrix associated to the Toeplitz form 
$\dfrac{e^{-it}
+\overline{\alpha}}{1+\alpha e^{-it}}$. For 
$u=(u_{0},\dots,u_{n-1})$ in ${\C}^{n}$ with ${\rVert u\rVert}_{2}=1$, we have 
\begin{eqnarray*}
 <S^{\ast}(\phi_{\alpha})u,u> &=&\int_{-\pi}^{\pi}\dfrac{e^{-it}+\overline{\alpha}}{1+\alpha e^{-it}}\Big|\sum_{k=0}^{n-1}u_{k}e^{ikt}\Big|^{2}\dfrac
{dt}{2\pi}\\
 &=& e^{-i\arg(\alpha)}\int_{-\pi}^{\pi}\dfrac{e^{-it}+|\alpha|}{1+|\alpha| e^{-it}}\Big|\sum_{k=0}^{n-1}v_{k}e^{ikt}\Big|^{2}\dfrac
{dt}{2\pi}\\
 &=& e^{-i\arg(\alpha)} <S^{\ast}(\phi_{|\alpha|})v,v>,
\end{eqnarray*}
with $v_{k}=e^{ik\arg(\alpha)}u_{k}$ and $v=(v_{0},\dots,v_{n-1}).$

(3) is in \cite{Gaaya1}.
\end{proof}

So, from Proposition \ref{nika}, the study of the numerical radius of $S^{\ast}(\phi_{\alpha})$ is independent from
 the argument of $\alpha$, and that, more generally, the numerical radius of $S^{\ast}(\phi_{\alpha})$ should be connected with 
the numerical radius of its real part. In the sequel, we will show that this intuition is correct.

For all $n\geq2$, we denote by $D_{n}(\lambda,\theta)$ the characteristic polynomial of $S^{\ast}(\phi_{-\alpha})$ where $0\leq \alpha<1$.
\begin{lemma}
\textit{ For all $n\geq2$, one has 
\begin{eqnarray*}
 D_{n}(\lambda,\theta)&=&\dfrac{(1-\lambda^{2})^{-\frac{1}{2}}}{2^n}~\mathscr{R}e\Bigg(\left(\left(1-\lambda^{2}\right)^{\frac{1}{2}}+i\lambda\right)\\
& &\left(-2\alpha\cos\theta-(1+\alpha^{2})\lambda+i(1-\alpha^{2})(1-\lambda^{2})^{\frac{1}{2}}\right)^n\Bigg).
\end{eqnarray*}}

\end{lemma}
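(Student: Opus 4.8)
The plan is to reduce the full (non-banded) Toeplitz determinant
$D_{n}(\lambda,\theta)=\det\big(\mathscr{R}e(e^{-i\theta}S^{\ast}(\phi_{-\alpha}))-\lambda I_{n}\big)$ to a \emph{tridiagonal} one by clearing the denominators hidden in $S^{\ast}(\phi_{-\alpha})$. Writing $T=S^{\ast}(\phi_{-\alpha})$, specializing formula (1) of Proposition~\ref{nika} to the single zero $-\alpha$ (with $\alpha$ real) gives $T=(S_{n}^{\ast}-\alpha I_{n})(I_{n}-\alpha S_{n}^{\ast})^{-1}$, hence the two displacement relations $T(I_{n}-\alpha S_{n}^{\ast})=S_{n}^{\ast}-\alpha I_{n}$ and $(I_{n}-\alpha S_{n})T^{\ast}=S_{n}-\alpha I_{n}$. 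Since $I_{n}-\alpha S_{n}$ and $I_{n}-\alpha S_{n}^{\ast}$ are unit–triangular (determinant $1$), multiplying $2\mathscr{R}e(e^{-i\theta}T)-2\lambda I_{n}=e^{-i\theta}T+e^{i\theta}T^{\ast}-2\lambda I_{n}$ on the left by $I_{n}-\alpha S_{n}$ and on the right by $I_{n}-\alpha S_{n}^{\ast}$ does not change the determinant, so I get $2^{n}D_{n}(\lambda,\theta)=\det C$, where $C:=(I_{n}-\alpha S_{n})\big(e^{-i\theta}T+e^{i\theta}T^{\ast}-2\lambda I_{n}\big)(I_{n}-\alpha S_{n}^{\ast})$.

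Next I would expand $C$. The two displacement relations kill the inverses, and using $S_{n}S_{n}^{\ast}=I_{n}-e_{1}\otimes e_{1}$ the product collapses to a Toeplitz tridiagonal matrix: its diagonal is $d=-4\alpha\cos\theta-2(1+\alpha^{2})\lambda$, its superdiagonal is $u=e^{-i\theta}+\alpha^{2}e^{i\theta}+2\lambda\alpha$, and its subdiagonal is $\ell=\overline{u}=\alpha^{2}e^{-i\theta}+e^{i\theta}+2\lambda\alpha$. The only anomaly, produced by the single–entry defect $e_{1}\otimes e_{1}$ in $S_{n}S_{n}^{\ast}$, is that the $(1,1)$ entry is $d'=-2\alpha\cos\theta-2\lambda$ instead of $d$. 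This tridiagonalisation is the genuine content of the argument, so it is the step I would carry out most carefully.

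After that the determinant is mechanical. Let $P_{k}$ denote the $k\times k$ pure Toeplitz tridiagonal minor, so that $P_{k}=dP_{k-1}-u\ell\,P_{k-2}$ with $P_{0}=1$, $P_{1}=d$; expanding $\det C$ along its first row gives $\det C=d'P_{n-1}-u\ell\,P_{n-2}=(d'-d)P_{n-1}+P_{n}$. Setting $R=-2\alpha\cos\theta-(1+\alpha^{2})\lambda$ and $J=(1-\alpha^{2})(1-\lambda^{2})^{1/2}$, one has $d=2R$, and a short computation confirms $u\ell=|u|^{2}=R^{2}+J^{2}$; thus the characteristic equation $x^{2}-dx+u\ell=0$ is $(x-R)^{2}+J^{2}=0$, with roots $r_{\pm}=R\pm iJ$. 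Writing $r_{+}=\rho e^{i\gamma}$ (so $\rho=|u|$, $\rho\cos\gamma=R$, $\rho\sin\gamma=J$) yields the closed form $P_{k}=\rho^{k}\sin((k+1)\gamma)/\sin\gamma$.

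Finally I would substitute and simplify. Using $(d'-d)+\rho\cos\gamma=-\lambda(1-\alpha^{2})$ and $\sin\gamma=(1-\alpha^{2})(1-\lambda^{2})^{1/2}/\rho$, together with $\sin((n+1)\gamma)=\sin(n\gamma)\cos\gamma+\cos(n\gamma)\sin\gamma$, the expression $(d'-d)P_{n-1}+P_{n}$ collapses to
\begin{equation*}
\det C=\rho^{n}\Big(\cos(n\gamma)-\frac{\lambda}{(1-\lambda^{2})^{1/2}}\sin(n\gamma)\Big)=(1-\lambda^{2})^{-1/2}\,\rho^{n}\cos(n\gamma+\arcsin\lambda).
\end{equation*}
Since $\rho^{n}\cos(n\gamma+\arcsin\lambda)=\mathscr{R}e\big(((1-\lambda^{2})^{1/2}+i\lambda)\,r_{+}^{\,n}\big)$ and $r_{+}=-2\alpha\cos\theta-(1+\alpha^{2})\lambda+i(1-\alpha^{2})(1-\lambda^{2})^{1/2}$, dividing by $2^{n}$ reproduces exactly the claimed formula. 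The one real obstacle is discovering the conjugation by $I_{n}-\alpha S_{n}$ and $I_{n}-\alpha S_{n}^{\ast}$ that turns the dense Toeplitz matrix into a tridiagonal one; once that is in hand, the identity $u\ell=R^{2}+J^{2}$ and the three–term recurrence make the remainder routine.
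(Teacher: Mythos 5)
Your argument is correct; I checked the key step and the conjugated matrix $C=(I_{n}-\alpha S_{n})\big(e^{-i\theta}T+e^{i\theta}T^{\ast}-2\lambda I_{n}\big)(I_{n}-\alpha S_{n}^{\ast})$ is indeed tridiagonal with the entries you state (the defect term $S_{n}S_{n}^{\ast}=I_{n}-e_{1}\otimes e_{1}$ contributes $\alpha e^{-i\theta}+\alpha e^{i\theta}+2\lambda\alpha^{2}$ to the $(1,1)$ entry, giving $d'=-2\alpha\cos\theta-2\lambda$), and the final simplification does reproduce the stated formula. The route is, however, organized differently from the paper's. The paper stays with the dense determinant and subtracts $\alpha$ times the second row (resp.\ column) from the first row (resp.\ column); since this leaves the lower-right $(n-1)\times(n-1)$ block equal to the same matrix one size down, it yields directly the three-term recurrence $D_{n}=\big(-2\alpha\cos\theta-(1+\alpha^{2})\lambda\big)D_{n-1}-|c|^{2}D_{n-2}$ with $c=\tfrac{\sigma}{2}e^{-i\theta}+\alpha^{2}\cos\theta+\alpha\lambda$, which it then solves with the initial data $D_{0}=1$, $D_{1}=-\alpha\cos\theta-\lambda$. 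You instead apply the congruence by the unit-triangular factors all at once, tridiagonalize up to the single corner defect, and invoke the closed form for tridiagonal Toeplitz determinants. The two computations coincide in substance: your $d$ and $u$ are exactly twice the paper's $b(\lambda,\theta)$ and $c(\lambda,\theta)$, so your characteristic roots $R\pm iJ$ are twice the paper's $\rho_{1,2}$ (consistent with $D_{n}=2^{-n}\det C$), and your corner correction $(d'-d)P_{n-1}$ plays precisely the role of the paper's initial conditions. What your version buys is a structural explanation --- the displacement relation $T(I_{n}-\alpha S_{n}^{\ast})=S_{n}^{\ast}-\alpha I_{n}$ shows \emph{why} the elimination works and performs it globally; what the paper's version buys is that the recurrence is obtained on $D_{n}$ itself with no corner bookkeeping and no appeal to the tridiagonal Toeplitz formula.
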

\begin{proof}
 For all $n\geq2$, we have
\[ D_{n}(\lambda,\theta) = \left|
\begin{array}{ccccc}
-\alpha\cos\theta-\lambda& \dfrac{\sigma}{2}e^{-i\theta} &  \dfrac{\alpha\sigma}{2}e^{-i\theta} &\cdots& \dfrac{\alpha^{n-2}\sigma}{2}e^{-i\theta} \\

\dfrac{\sigma}{2}e^{i\theta} &-\alpha\cos\theta-\lambda & \dfrac{\sigma}{2}e^{-i\theta}&\cdots& \dfrac{\alpha^{n-3}\sigma}{2}e^{-i\theta}\\

\dfrac{\alpha\sigma}{2}e^{i\theta} & \dfrac{\sigma}{2}e^{i\theta} & -\alpha\cos\theta-\lambda&\cdots&\dfrac{\alpha^{n-4}\sigma}{2}e^{-i\theta} \\

\cdots&\cdots&\cdots&\cdots&\cdots\\

\dfrac{\alpha^{n-2}\sigma}{2}e^{i\theta}&\dfrac{\alpha^{n-3}\sigma}{2}e^{i\theta}&\dfrac{\alpha^{n-4}\sigma}{2}e^{i\theta}&\cdots&-\alpha\cos\theta-\lambda
\end{array} \right|. \]
Multiplying the second row of this determinant by $\alpha$ and subtracting it from the first we obtain
$$D_{n}(\lambda,\theta)=\left|
\begin{array}{ccccc}
    a(\lambda,\theta)& c(\lambda,\theta) &  0 &\cdots& 0\\
   \dfrac{\sigma}{2}e^{i\theta} &-\alpha\cos\theta-\lambda & \dfrac{\sigma}{2}e^{-i\theta}&\cdots& \dfrac{\alpha^{n-3}\sigma}{2}e^{-i\theta}\\
   \dfrac{\alpha\sigma}{2}e^{i\theta} & \dfrac{\sigma}{2}e^{i\theta} & -\alpha\cos\theta-\lambda&\cdots&\dfrac{\alpha^{n-4}\sigma}{2}e^{-i\theta} \\
\cdots&\cdots&\cdots&\cdots&\cdots\\
\dfrac{\alpha^{n-2}\sigma}{2}e^{i\theta}&\dfrac{\alpha^{n-3}\sigma}{2}e^{i\theta}&\dfrac{\alpha^{n-4}\sigma}{2}e^{i\theta}&\cdots&-\alpha\cos\theta-\lambda
\end{array} \right|,$$ 
with  $a(\lambda,\theta)=-\alpha\cos\theta-\lambda-\dfrac{\alpha\sigma}{2}e^{i\theta}$ and 
$c(\lambda,\theta)=\dfrac{\sigma}{2}e^{-i\theta}+\alpha^{2}\cos\theta+\alpha\lambda$.
Performing a similar operation with the columns, we find
$$D_{n}(\lambda,\theta)=\left|
\begin{array}{ccccc}
    b(\lambda,\theta)& c(\lambda,\theta) &  0 &\cdots& 0\\
   \overline{c(\lambda,\theta)}&-\alpha\cos\theta-\lambda & \dfrac{\sigma}{2}e^{-i\theta}&\cdots& \dfrac{\alpha^{n-3}\sigma}{2}e^{-i\theta}\\
   0 & \dfrac{\sigma}{2}e^{i\theta} & -\alpha\cos\theta-\lambda&\cdots&\dfrac{\alpha^{n-4}\sigma}{2}e^{-i\theta} \\
\cdots&\cdots&\cdots&\cdots&\cdots\\
0&\dfrac{\alpha^{n-3}\sigma}{2}e^{i\theta}&\dfrac{\alpha^{n-4}\sigma}{2}e^{i\theta}&\cdots&-\alpha\cos\theta-\lambda
\end{array} \right|, $$
with $b(\lambda,\theta)=-2\alpha\cos\theta-\lambda(1+\alpha^{2})$.
Which implies that for $n\geq3$, we have 
$$D_{n}(\lambda,\theta)=\bigg(-2\alpha\cos\theta-\lambda(1+\alpha^{2})\bigg)D_{n-1}(\lambda,\theta)-\bigg|\dfrac{\sigma}{2}e^{i\theta}+\alpha^{2}\cos\theta+\alpha\lambda\bigg|^{2}D_{n-2}(\lambda,\theta).$$
This recurrence relation holds also for $n=2$ provided we put $D_{0}(\lambda,\theta)=1$ and form the equation 
$$\rho^{2}=-\bigg(2\alpha\cos\theta+\lambda(1+\alpha^{2})\bigg)\rho-\bigg|\dfrac{\sigma}{2}e^{i\theta}+\alpha^{2}\cos\theta+\alpha\lambda\bigg|^{2}$$
with discriminant $$\Delta=\bigg(\mp i(1-\alpha^{2})(1-\lambda^{2})^{\frac{1}{2}}\bigg)^{2}.$$ The roots are
$$\rho_{1}=\dfrac{-2\alpha\cos\theta-\lambda(1+\alpha^{2})- i(1-\alpha^{2})(1-\lambda^{2})^{\frac{1}{2}}}{2}$$and 
$$\rho_{2}=\dfrac{-2\alpha\cos\theta-\lambda(1+\alpha^{2})+ i(1-\alpha^{2})(1-\lambda^{2})^{\frac{1}{2}}}{2}.$$
So that
$$D_{n}(\lambda,\theta)=A{\rho_{1}}^{n}+B{\rho_{2}}^{n},$$
where the constants $A$, $B$ can be determined from the ``initial conditions``
$$D_{0}(\lambda,\theta)=1=A+B$$ and 
$$D_{1}(\lambda,\theta)=-\alpha\cos\theta-\lambda=A{\rho_{1}}+B{\rho_{2}}.$$
This yields that $$A=\dfrac{(1-\lambda^{2})^\frac{1}{2}-i\lambda}{2(1-\lambda^{2})^\frac{1}{2}}, ~~~B=\dfrac{(1-\lambda^{2})^\frac{1}{2}+i\lambda}{2(1-\lambda^{2})^\frac{1}{2}}$$
which establishes the desired formula. 
\end{proof}
Using this Lemma, we obtain the following result.
\begin{proposition}\label{bb1}
\textit{ One has:
$$\omega_{2}(S^{\ast}(\phi_{-\alpha}))=\omega_{2}(\mathscr{R}e(S^{\ast}(\phi_{-\alpha}))).$$}
\end{proposition}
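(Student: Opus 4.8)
The plan is to read $\omega_{2}$ off the support function of $W(S^{\ast}(\phi_{-\alpha}))$ and to show that the farthest point from the origin lies on the real axis. Write $T=S^{\ast}(\phi_{-\alpha})$ with $0\leq\alpha<1$; since $\phi_{-\alpha}$ is a finite Blaschke product, $T\in\Upsilon_{n}$ (as is its adjoint), so by Theorem~\ref{hayka} the boundary $\partial W(T)$ is a regular arc and $\lambda(\theta)$, the largest eigenvalue of $\mathscr{R}e(e^{-i\theta}T)$, is differentiable. By Proposition~\ref{nika1} one has $\omega_{2}(T)=\sup_{0\leq\theta<2\pi}\lambda(\theta)$. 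On the other hand $\mathscr{R}e(T)$ is self-adjoint, so $\omega_{2}(\mathscr{R}e(T))=\lVert\mathscr{R}e(T)\rVert=\max(\lambda(0),\lambda(\pi))$ (its two extreme eigenvalues, since $\mathscr{R}e(e^{-i\pi}T)=-\mathscr{R}e(T)$), and $\omega_{2}(\mathscr{R}e(T))\leq\omega_{2}(T)$ always because $|\mathscr{R}e\langle Tx,x\rangle|\leq|\langle Tx,x\rangle|$. Thus it suffices to prove that $\sup_{\theta}\lambda(\theta)$ is attained at $\theta\in\{0,\pi\}$.

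The key structural input is the preceding Lemma: $D_{n}(\lambda,\theta)$ depends on $\theta$ only through $c:=\cos\theta$. I would therefore regard the largest root as a function $\tilde\lambda(c)$ of $c\in[-1,1]$, noting that $\theta=0$ and $\theta=\pi$ correspond to $c=1$ and $c=-1$. By Proposition~\ref{nika}(3) the numerical range is symmetric about the real axis, hence $\lambda(\theta)=\lambda(-\theta)$ and it is enough to work on $\theta\in[0,\pi]$, i.e. on $c\in[-1,1]$; the extrema of $\lambda(\theta)$ then correspond to those of $\tilde\lambda$ over $c$. The whole problem reduces to showing that $\tilde\lambda$ is monotone in $c$, which forces its maximum to an endpoint.

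To get monotonicity I would differentiate the closed form from the Lemma. Writing $D_{n}(\lambda,\theta)=2\,\mathscr{R}e(B\rho_{2}^{\,n})$ with $B$ independent of $c$ and $\partial_{c}\rho_{2}=-\alpha$ (with $B,\rho_{2}$ as in the proof of the Lemma), one gets the clean identity
\begin{equation*}
\partial_{c}D_{n}=-n\alpha\,D_{n-1}.
\end{equation*}
Implicit differentiation of $D_{n}(\tilde\lambda(c),c)=0$ then yields $\tilde\lambda'(c)=-\partial_{c}D_{n}/\partial_{\lambda}D_{n}=n\alpha\,D_{n-1}(\tilde\lambda)/\partial_{\lambda}D_{n}(\tilde\lambda)$. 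Because of the Toeplitz structure, $D_{n-1}(\cdot,\theta)$ is exactly the characteristic polynomial of the leading $(n-1)\times(n-1)$ principal submatrix of the Hermitian matrix $\mathscr{R}e(e^{-i\theta}T)$; Cauchy interlacing gives $\tilde\lambda(c)\geq\lambda_{\max}$ of that submatrix, so $D_{n-1}(\tilde\lambda)$ has sign $(-1)^{n-1}$, while $\tilde\lambda$ being a simple eigenvalue (Corollary~\ref{ter2}(3)) makes $\partial_{\lambda}D_{n}(\tilde\lambda)$ nonzero of sign $(-1)^{n}$. Hence $\tilde\lambda'(c)\leq0$ for $\alpha>0$, so $\tilde\lambda$ is non-increasing and $\max_{c}\tilde\lambda=\tilde\lambda(-1)=\lambda(\pi)$.

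Combining everything, $\omega_{2}(T)=\lambda(\pi)=-\lambda_{\min}(\mathscr{R}e(T))\leq\lVert\mathscr{R}e(T)\rVert=\omega_{2}(\mathscr{R}e(T))\leq\omega_{2}(T)$, which forces equality (the case $\alpha=0$ being $T=S_{n}^{\ast}$, for which $W(T)$ is a disc centred at the origin and the statement is trivial). The step I expect to be the main obstacle is pinning down the sign of $\tilde\lambda'(c)$: everything hinges on the sign-definite identity $\partial_{c}D_{n}=-n\alpha D_{n-1}$ together with the interlacing that fixes the sign of $D_{n-1}(\tilde\lambda)$. Once those two ingredients are secured, the monotonicity — and hence the reduction of $\omega_{2}(T)$ to the real direction — is immediate.
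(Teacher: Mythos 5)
Your proposal is correct and follows essentially the same path as the paper: both reduce the claim to showing that $\lambda(\theta)=\lambda_{\max}(\mathscr{R}e(e^{-i\theta}S^{\ast}(\phi_{-\alpha})))$ attains its supremum at $\theta\in\{0,\pi\}$, both restrict to $[0,\pi]$ by the symmetry from Proposition \ref{nika}, and both exploit implicit differentiation of $D_{n}(\lambda(\theta),\theta)=0$ together with the key identity $\partial_{\theta}D_{n}=n\alpha\sin\theta\,D_{n-1}$ coming from the closed form of the Lemma. The only real divergence is the last step. The paper argues by contradiction: a critical point $\theta_{0}\in\left]0,\pi\right[$ would force $D_{n-1}(\lambda(\theta_{0}),\theta_{0})=0$, and the three-term recurrence then propagates the common vanishing of consecutive $D_{k}$'s down to the absurdity $D_{0}=1=0$ (using that the off-diagonal coefficient $c(\lambda,\theta)$ is nonzero when $\sin\theta\neq0$); this yields monotonicity of $\lambda$ without identifying its direction. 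You instead pin down the sign of $\tilde\lambda'(c)$ via Cauchy interlacing applied to $D_{n-1}$ (the characteristic polynomial of the leading principal submatrix, by the Toeplitz structure) and the simplicity of $\lambda(\theta)$ from Corollary \ref{ter2}; this is slightly stronger, as it shows the maximum sits at $\theta=\pi$, i.e.\ $\omega_{2}(S^{\ast}(\phi_{-\alpha}))=-\lambda_{\min}(\mathscr{R}e(S^{\ast}(\phi_{-\alpha})))$, a fact the paper only recovers later in the proof of Proposition \ref{akouda}. One small point of care: interlacing gives $\tilde\lambda(c)\geq\lambda_{\max}$ of the submatrix only non-strictly, so you obtain $\tilde\lambda'(c)\leq0$ rather than $<0$; that still yields a non-increasing $\tilde\lambda$ and hence the endpoint maximum, so the argument closes.
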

\begin{proof}
 Let consider the applications:
$$ \begin{array}{ll}
\Phi_{n} : &[0,\pi] \longrightarrow \mathbb{R} \\
   &\theta \longrightarrow D_{n}(\lambda(\theta),\theta)\end{array}$$
 $$ \begin{array}{ll}
\Psi_{n} : &\mathbb{R}^{2} \longrightarrow \mathbb{R} \\
   &(x,y) \longrightarrow D_{n}(x,y)\end{array}$$
 and
$$ \begin{array}{ll}
\Omega : &[0,\pi]  \longrightarrow \mathbb{R}^{2} \\
   &\theta \longrightarrow (\lambda(\theta),\theta)
\end{array}.$$
Now, since $\lambda(\theta)$ is an eigenvalue of $\mathscr{R}e(e^{-i\theta}S^{\ast}(\phi_{-\alpha}))$ thus 
 $\Phi_{n}(\theta)=\Psi_{n}\circ\Omega(\theta)=0$ 
for all $\theta\in[0,\pi]$ which implies that 
$$\Phi_{n}'(\theta)=D\Psi_{n}(\Omega_{n}(\theta))(\lambda'(\theta),1)=d'(\theta)\dfrac{\partial\Psi_{n}}{\partial x}(\lambda(\theta),\theta)
+\dfrac{\partial\Psi_{n}}{\partial y}(\lambda(\theta),\theta)=0.$$ Recall that from Proposition \ref{nika},
 $W(S^{\ast}(\phi_{-\alpha}))$ is symmetric with respect to the real axis. Therefore, using Proposition \ref{nika1},
$$\omega_{2}(S^{\ast}(\phi_{-\alpha}))=\sup\{\lambda(\theta),~ 0\leq\theta \leq\pi\}.$$
Assume that there exists $\theta_{0}\in]0,\pi[$ such that $d'(\theta_{0})=0$ then $\dfrac{\partial\Psi_{n}}{\partial y}(d(\theta_{0}),\theta_{0})=0$. Therefore
$$2\alpha n\sin\theta_{0}\Phi_{n-1}(\theta_{0})=0$$
Now since $\sin\theta_{0}\neq0$ then $D_{n-1}(d(\theta_{0}),\theta_{0})=0$ which is impossible. Otherwise $1=D_{0}(d(\theta_{0}),\theta_{0})=0$.
Hence, $\lambda(\theta)$ 
is a monotonic function and its maximum is attained for $\theta=0$ or $\pi$. This 
concludes the proof.
\end{proof}
\section{An explicit formula of the numerical radius of $S(\phi)$ where $\phi$ is a finite Blaschke product with unique zero.}
\subsection{Preliminaries}
Toeplitz matrices are found in several areas of mathematics such complex and harmonic analysis. One of these matrices 
is of particular interest in these areas. It is about the Kac, Murdokh and Szegö matrix \cite{Kac}:
$$K_{n}(\alpha)=\left(
\begin{array}{cccc}
1&\alpha&\cdots&\alpha^{n-1}\\
\alpha&\ddots&\ddots &  \vdots                \\
\vdots&  \ddots               &  \ddots                      &\alpha          \\
\alpha^{n-1}&  \cdots               &   \alpha                     &        1
\end{array}
\right)=(\alpha^{\lvert r-s\lvert})^{n}_{r,s=1}$$ 
with $0\leqslant\alpha<1$.

The spectral decomposition of this matrix is very well understood in the computational sense. For this reason, 
these matrices are often used as test matrices. 
It's shown in \cite{Szego} page 69--72 that $K_{n}(\alpha)$ is a Toeplitz matrix associated with the Poisson kernel $P_{\alpha}(e^{it})=(1-\alpha^{2}/{\vert 1-\alpha e^{it}\vert}^{2}$ and its eigenvalues are:$$ \lambda_{k}^{(n)}=P_{\alpha}(e^{it_{k}^{(n)}})~~,1\leq k \leq n$$
where $t_{k}^{(n)}$ are the solutions of 
\begin{equation}
 p_{n}(\cos t)=\dfrac{\sin(n+1)t-2\alpha\sin nt+\alpha^{2}\sin(n-1)t}{\sin t}=0.
\end{equation}
The expression $p_{n}(\cos t)$ is a polynomial of degree $n$ in $\cos t$ and it has $n$ real distinct zeros $\cos t_{k}^{(n)}$ for $1\leqslant k\leqslant n$ where :
$$0< t_{1}^{(n)} <  t_{2}^{(n)} < t_{3}^{(n)} < \cdots  < t_{n}^{(n)}< \pi~.$$
This implies that $$\dfrac{1+\alpha}{1-\alpha} >\lambda_{1}^{(n)} > \lambda_{2}^{(n)} > \lambda_{3}^{(n)} > \cdots > \lambda_{n}^{(n)} > \dfrac{1-\alpha}{1+\alpha} ~. $$
The evaluation of the zeros $t_{k}^{(n)}$ in explicit terms seems to be out of end. However, it is easy to show that they are separated by the quantities $x_{k}=\dfrac{k\pi}{n+1},~1\leq k \leq n$. Indeed, for $1\leqslant k\leqslant n$  $$p_{n}(\cos x_{k})=(-1)^{k}2\alpha(1-\alpha\cos x_{k})$$ and  $$ sgn~p_{n}(\cos x_{k})=(-1)^{k}~.$$
Also we see by direct substitution that the latter equation holds for $k=0$, so that $$0< t_{1}^{(n)} \leqslant x_{1}<  t_{2}^{(n)}\leqslant x_{2}  < \cdots  < t_{n}^{(n)}\leqslant x_{n}< \pi ~.$$

\begin{remark}\label{bsissa}
\textit{In the case where $\alpha=0$ we have $t_{k}^{(n)}=x_{k}$. }
\end{remark}

\begin{proposition}\label{Haykoul117}
 
\textit{For all $1\leq k \leq n$, we have $$t_{k}^{(n)} ~~\mbox{is the unique solution of }~~\left\{
    \begin{array}{ll}
        \cos\tfrac{(n+1)t}{2}-\alpha \cos\tfrac{(n-1)t}{2}  & \mbox{if}~~ k~~ \mbox{is odd} \\
        \sin\tfrac{(n+1)t}{2}-\alpha \sin\tfrac{(n-1)t}{2}& \mbox{if}~~ k ~~\mbox{is even.}
    \end{array}
\right.
$$ in the interval $]\tfrac{(k-1)\pi}{k+1}, \tfrac{k\pi}{k+1}]$.}
\end{proposition}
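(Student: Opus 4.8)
The plan is to reduce the equation $p_n(\cos t)=0$ to a product of two elementary equations — one in cosines, one in sines — and then to decide, root by root, which factor is responsible for each $t_k^{(n)}$. Set
$$f(t)=\cos\tfrac{(n+1)t}{2}-\alpha\cos\tfrac{(n-1)t}{2},\qquad g(t)=\sin\tfrac{(n+1)t}{2}-\alpha\sin\tfrac{(n-1)t}{2}.$$
First I would establish the key identity: writing $A=\tfrac{(n+1)t}{2}$, $B=\tfrac{(n-1)t}{2}$, so that $A+B=nt$, $2A=(n+1)t$ and $2B=(n-1)t$, a direct expansion followed by the product-to-sum formulas yields
$$2\,f(t)\,g(t)=\sin(n+1)t-2\alpha\sin nt+\alpha^{2}\sin(n-1)t.$$
Hence $p_n(\cos t)\sin t=2f(t)g(t)$, and since $\sin t>0$ on $(0,\pi)$ the zeros of $p_n(\cos t)$ there are exactly the union of the zeros of $f$ and of $g$; in particular every zero of $f$ or of $g$ in $(0,\pi)$ is one of the $t_k^{(n)}$.

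Next I would observe that $f$ and $g$ have no common zero: if $f(t)=g(t)=0$ then $\cos A=\alpha\cos B$ and $\sin A=\alpha\sin B$, so squaring and adding gives $1=\alpha^{2}$, contradicting $0\le\alpha<1$. Thus each $t_k^{(n)}$ solves exactly one of the equations $f=0$, $g=0$. The uniqueness clause is then automatic: the separation established above confines the unique root $t_k^{(n)}$ of $p_n$ to the interval $(x_{k-1},x_k]$ (with $x_k=\tfrac{k\pi}{n+1}$ and $x_0=0$), and since every zero of $f$ or of $g$ in $(0,\pi)$ is such a root, the relevant factor can have no other zero inside that interval.

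It remains to match the parity of $k$ with the correct factor, which is the step requiring the most care. I would argue by continuous deformation in $\alpha$. For every $\alpha\in[0,1)$ the separation keeps $t_k^{(n)}(\alpha)$ in the disjoint interval $(x_{k-1},x_k]$, where it is the unique and simple zero of the smooth function $N(\alpha,t)=\sin(n+1)t-2\alpha\sin nt+\alpha^{2}\sin(n-1)t$ (simplicity because $p_n$ has distinct roots and $\sin t>0$); by the implicit function theorem $t_k^{(n)}(\alpha)$ is therefore continuous in $\alpha$. Because $f$ and $g$ never vanish together, continuity makes both the set of $\alpha$ for which $t_k^{(n)}(\alpha)$ solves $f=0$ and the set for which it solves $g=0$ open, so by connectedness of $[0,1)$ one of them is empty and the type of the root is constant. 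Reading it off at $\alpha=0$, where $t_k^{(n)}(0)=x_k$ by Remark \ref{bsissa} and $f(x_k)=\cos\tfrac{k\pi}{2}$, $g(x_k)=\sin\tfrac{k\pi}{2}$, shows that $t_k^{(n)}(0)$ solves $f=0$ for $k$ odd and $g=0$ for $k$ even; by constancy this persists for all $\alpha$, which is the assertion.

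The factorization is only a routine trigonometric computation, so the real obstacle is this last identification: the crucial points are that $f$ and $g$ can never be simultaneously zero — so the two families of roots never cross as $\alpha$ varies — and that $[0,1)$ is connected, which together let one transport the transparent $\alpha=0$ pattern to all admissible $\alpha$. A purely static sign count at the grid points $x_k$ is less convenient, because the signs of $f$ at the odd $x_k$ and of $g$ at the even $x_k$ are not controlled.
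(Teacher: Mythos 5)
Your argument is correct, and it reaches the parity identification --- the only non-routine step --- by a genuinely different route from the paper. The paper argues pointwise at each fixed $\alpha$: assuming, say, that $t_{k}^{(n)}$ with $k$ even solved $\cos\tfrac{(n+1)t}{2}=\alpha\cos\tfrac{(n-1)t}{2}$, it expands $\cos\tfrac{(n-1)t}{2}=\cos\bigl(\tfrac{(n+1)t}{2}-t\bigr)$ to obtain $\cos\tfrac{(n+1)t_{k}^{(n)}}{2}\,(1-\alpha\cos t_{k}^{(n)})=\alpha\sin\tfrac{(n+1)t_{k}^{(n)}}{2}\,\sin t_{k}^{(n)}$, and since $1-\alpha\cos t_{k}^{(n)}>0$ and $\sin t_{k}^{(n)}>0$ this forces $\cos\tfrac{(n+1)t_{k}^{(n)}}{2}$ and $\sin\tfrac{(n+1)t_{k}^{(n)}}{2}$ to share a sign, contradicting the localisation $(p-\tfrac12)\pi\le\tfrac{(n+1)t_{k}^{(n)}}{2}\le p\pi$ coming from the separation $x_{k-1}<t_{k}^{(n)}\le x_{k}$; the odd case is symmetric. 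You instead make the identification globally in $\alpha$: the factorization, the disjointness of the zero sets of $f$ and $g$ (your $1=\alpha^{2}$ observation, which the paper never states explicitly), and continuity of the simple root $t_{k}^{(n)}(\alpha)$ let you transport the transparent $\alpha=0$ pattern by connectedness. Both proofs rest on the same factorization and the same interlacing; what yours buys is robustness --- it sidesteps the strict-versus-weak sign bookkeeping that the paper's contradiction needs (and is somewhat cavalier about when one of the trigonometric factors vanishes, or when $\alpha=0$) --- at the price of invoking the implicit function theorem, which the paper's purely algebraic computation avoids. One small remark: the interval in the statement should read $]\tfrac{(k-1)\pi}{n+1},\tfrac{k\pi}{n+1}]$ (the printed $k+1$ in the denominators is a typo), which is how you, correctly, interpreted it.
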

\begin{proof}
First of all, we observe that 
$$p_{n}(\cos t)=\frac{2}{\sin t}\left( \sin\frac{(n+1)t}{2}-\alpha \sin\frac{(n-1)t}{2}\right) \left( \cos\frac{(n+1)t}{2}-\alpha \cos\frac{(n-1)t}{2}\right).$$
Note that for all $1\leq k \leq n$, we have
$$\dfrac{(k-1)\pi}{k+1}\leq t_{k}^{(n)}\leq \dfrac{k\pi}{k+1}$$
Then
$$(k-1)\dfrac{\pi}{2}\leq \dfrac{n+1}{2}t_{k}^{(n)}\leq \dfrac{k}{2}\pi.$$We consider two cases:

\underline{If $k$ is even:}

Then $k=2p$ with $p\in \mathbb{N}^{\ast}$ and 
$$(p-\dfrac{1}{2})\pi\leq \dfrac{n+1}{2}t_{k}^{(n)}\leq p\pi$$ which implies that
 $$\cos\frac{(n+1)t_{k}^{(n)}}{2}~~\sin\frac{(n+1)t_{k}^{(n)}}{2}\leq0.$$ 
If $t_{k}^{(n)}$ is a solution of the equation $\cos\frac{(n+1)t}{2}=\alpha \cos\frac{(n-1)t}{2}$. Therefore
$$\alpha\left(\cos\frac{(n+1)t_{k}^{(n)}}{2}\cos t_{k}^{(n)}+ \sin\frac{(n+1)t_{k}^{(n)}}{2}\sin t_{k}^{(n)}\right)=
\cos\frac{(n+1)t_{k}^{(n)}}{2}$$
Thus
\begin{equation}
 \cos\frac{(n+1)t_{k}^{(n)}}{2}\left(1-\alpha\cos t_{k}^{(n)}\right)=\alpha \sin\frac{(n+1)t_{k}^{(n)}}{2}\sin t_{k}^{(n)}\label{1}
\end{equation}
Since $1-\alpha\cos t_{k}^{(n)}$ and $\sin t_{k}^{(n)}$ are both positive then from (\ref{1}), $\cos\dfrac{(n+1)t_{k}^{(n)}}{2}$ and 
$\sin\dfrac{(n+1)t_{k}^{(n)}}{2}$ have inevitably the same sign which is absurd.

\underline{If $k$ is odd:}

We have
$$p\pi\leq \dfrac{n+1}{2}t_{k}^{(n)}\leq (p+\dfrac{1}{2})\pi$$ Thus
 $$\cos\frac{(n+1)t_{k}^{(n)}}{2}~~\sin\frac{(n+1)t_{k}^{(n)}}{2}\geq0.$$ 
If $t_{k}^{(n)}$ is a solution of the equation $\sin\frac{(n+1)t}{2}=\alpha \sin\frac{(n-1)t}{2}$. Then
$$\alpha\left(\sin\frac{(n+1)t_{k}^{(n)}}{2}\cos t_{k}^{(n)}- \cos\frac{(n+1)t_{k}^{(n)}}{2}\sin t_{k}^{(n)}\right)=
\sin\frac{(n+1)t_{k}^{(n)}}{2}$$
Thus
\begin{equation}
 \sin\frac{(n+1)t_{k}^{(n)}}{2}\left(\alpha\cos t_{k}^{(n)}-1\right)=\alpha \cos\frac{(n+1)t_{k}^{(n)}}{2}\sin t_{k}^{(n)}
\end{equation}
Since $\alpha\cos t_{k}^{(n)}-1\leq0$ and $\sin t_{k}^{(n)}\geq0$ then
 $$\cos\frac{(n+1)t_{k}^{(n)}}{2}~~\sin\frac{(n+1)t_{k}^{(n)}}{2}\leq0$$  which is impossible.

\end{proof}

\begin{proposition}\label{akouda}
 \textit{For $0\leqslant\alpha<1$, we have 
$$\omega_{2}(\mathscr{R}e(S^{\ast}(\phi_{-\alpha})))=\frac{-(1+\alpha^{2})\cos t_{n}^{(n)}+2\alpha}{1-2\alpha \cos t_{n}^{(n)}+\alpha^{2}}.$$}
\end{proposition}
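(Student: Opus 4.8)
The plan is to recognise $\mathscr{R}e(S^{\ast}(\phi_{-\alpha}))$ as a self-adjoint Toeplitz matrix and to diagonalise it through the Kac--Murdoch--Szegő matrix $K_{n}(\alpha)$. First I would invoke Proposition \ref{nika}: the matrix $S^{\ast}(\phi_{-\alpha})$ is the $n$-Toeplitz matrix with symbol $\dfrac{e^{-it}-\alpha}{1-\alpha e^{-it}}$, and since truncation and passage to the adjoint are compatible with Toeplitz symbols, $\mathscr{R}e(S^{\ast}(\phi_{-\alpha}))$ is the Hermitian Toeplitz matrix whose symbol is the real part of that function, namely $\dfrac{(1+\alpha^{2})\cos t-2\alpha}{1-2\alpha\cos t+\alpha^{2}}$. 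A direct computation rewrites this symbol as an affine function of the Poisson kernel,
$$\frac{(1+\alpha^{2})\cos t-2\alpha}{1-2\alpha\cos t+\alpha^{2}}=\frac{1-\alpha^{2}}{2\alpha}\,P_{\alpha}(e^{it})-\frac{1+\alpha^{2}}{2\alpha},$$
and therefore, comparing Fourier coefficients (the $m$-th coefficient of $P_{\alpha}$ being $\alpha^{|m|}$, so that $K_{n}(\alpha)$ is exactly the truncation of $P_{\alpha}$),
$$\mathscr{R}e(S^{\ast}(\phi_{-\alpha}))=\frac{1-\alpha^{2}}{2\alpha}\,K_{n}(\alpha)-\frac{1+\alpha^{2}}{2\alpha}\,I_{n}.$$

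Since $\tfrac{1-\alpha^{2}}{2\alpha}>0$, the eigenvalues of $\mathscr{R}e(S^{\ast}(\phi_{-\alpha}))$ are the increasing affine images of the eigenvalues $\lambda_{k}^{(n)}=P_{\alpha}(e^{it_{k}^{(n)}})$ of $K_{n}(\alpha)$ recalled in the preliminaries; using $P_{\alpha}(e^{it})=\tfrac{1-\alpha^{2}}{1-2\alpha\cos t+\alpha^{2}}$ they simplify to
$$\mu_{k}=\frac{(1+\alpha^{2})\cos t_{k}^{(n)}-2\alpha}{1-2\alpha\cos t_{k}^{(n)}+\alpha^{2}},\qquad \mu_{1}>\mu_{2}>\cdots>\mu_{n}.$$
As the matrix is self-adjoint, $\omega_{2}(\mathscr{R}e(S^{\ast}(\phi_{-\alpha})))$ equals its spectral radius $\max_{k}|\mu_{k}|=\max(\mu_{1},-\mu_{n})$. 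Since $-\mu_{n}$ is precisely the asserted right-hand side, the whole proposition reduces to the single comparison $-\mu_{n}\geq\mu_{1}$, i.e. $\mu_{1}+\mu_{n}\leq0$ (which in turn forces $\mu_{n}<0$, so that $-\mu_{n}=|\mu_{n}|$ and the maximum is indeed attained at $k=n$).

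The main obstacle is exactly this last comparison, because both $t_{1}^{(n)}$ and $t_{n}^{(n)}$ depend on $\alpha$. I would handle it through the reflection $s:=\pi-t_{n}^{(n)}$. Writing $g(c)=\tfrac{c-\tau}{1-\tau c}$ with $\tau=\tfrac{2\alpha}{1+\alpha^{2}}\in[0,1)$, one has $\mu_{k}=g(\cos t_{k}^{(n)})$ with $g$ an increasing fractional-linear bijection of $[-1,1]$ fixing $\pm1$, and a short manipulation turns $\mu_{1}+\mu_{n}\leq0$ into
$$\bigl(\cos t_{1}^{(n)}-\cos s\bigr)(1+\tau^{2})\;\leq\;2\tau\,\bigl(1-\cos t_{1}^{(n)}\cos s\bigr).$$
Feeding the trigonometric equation of Proposition \ref{Haykoul117} into this reflection, and separating the parities of $n$ through the evaluations of $\sin$ and $\cos$ at $\tfrac{(n\pm1)\pi}{2}$, the point $s$ is seen to satisfy $\cos\tfrac{(n+1)t}{2}=-\alpha\cos\tfrac{(n-1)t}{2}$, that is, the equation governing $t_{1}^{(n)}$ with $\alpha$ replaced by $-\alpha$; moreover the separation $t_{1}^{(n)}\leq\tfrac{\pi}{n+1}\leq s$ yields $\cos t_{1}^{(n)}\geq\cos s$, so both sides of the displayed inequality are nonnegative. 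I expect the displayed inequality to collapse, once the two defining relations are inserted, to the single condition $\alpha<1$ (as it does for $n=2,3$, where it reduces respectively to $2\alpha^{2}\leq3$ and to $\alpha^{2}\leq1$); making this elimination uniform in $n$ is the one genuinely delicate point. Failing a clean closed form, I would instead prove $\mu_{1}+\mu_{n}\leq0$ by noting that the difference $-\mu_{n}-\mu_{1}$ vanishes at $\alpha=0$ (where $t_{k}^{(n)}=\tfrac{k\pi}{n+1}$ by Remark \ref{bsissa} and $\mu_{k}=\cos\tfrac{k\pi}{n+1}$) and remains nonnegative on $[0,1)$.
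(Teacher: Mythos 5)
Your reduction is sound up to the last step and coincides with the first half of the paper's argument: the identification $\mathscr{R}e(S^{\ast}(\phi_{-\alpha}))=\tfrac{1-\alpha^{2}}{2\alpha}\bigl(K_{n}(\alpha)-\tfrac{1+\alpha^{2}}{1-\alpha^{2}}I_{n}\bigr)$, the resulting eigenvalues $\mu_{k}$, and the remark that for a Hermitian matrix $\omega_{2}=\max(\mu_{1},-\mu_{n})$ are all correct. The genuine gap is that the decisive comparison $-\mu_{n}\geq\mu_{1}$ is never established. Your trigonometric route through the reflection $s=\pi-t_{n}^{(n)}$ is only verified for $n=2,3$, and you acknowledge that making the elimination uniform in $n$ is the delicate point; the proposed fallback is circular, since ``$-\mu_{n}-\mu_{1}$ remains nonnegative on $[0,1)$'' is precisely the statement to be proved. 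Worse, the observation that this difference vanishes at $\alpha=0$ gives no leverage: it means the inequality is \emph{tight} at the endpoint, so no soft continuity argument can work, and a perturbative one would require controlling how both $t_{1}^{(n)}$ and $t_{n}^{(n)}$ (implicit functions of $\alpha$) move, which you do not do. As written, the key inequality $(1+\tau^{2})\bigl(\cos t_{1}^{(n)}+\cos t_{n}^{(n)}\bigr)\leq 2\tau\bigl(1+\cos t_{1}^{(n)}\cos t_{n}^{(n)}\bigr)$, $\tau=\tfrac{2\alpha}{1+\alpha^{2}}$, is a conjecture supported by two examples, not a proof.

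The paper avoids comparing $\mu_{1}$ with $-\mu_{n}$ altogether. Using the simplicity of the eigenvalues of $\mathscr{R}e(e^{-i\theta}S^{\ast}(\phi_{-\alpha}))$ (Corollary \ref{ter2}\,(3)), it first shows that the numerical radius is attained at a unit vector $v$ with \emph{real} coordinates; it then splits the quadratic form into the pairs with $l-m$ even, whose contribution equals $\int_{-\pi}^{\pi}P_{\alpha^{2}}(e^{2it})\bigl|\sum_{l}v_{l}e^{ilt}\bigr|^{2}\tfrac{dt}{2\pi}\in\bigl[0,\tfrac{1+\alpha^{2}}{1-\alpha^{2}}\bigr]$, and the pairs with $l-m$ odd, and compares $v$ with the sign-alternated vector $\widetilde{v}=\bigl((-1)^{l}|v_{l}|\bigr)_{l}$. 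This symmetrization shows the extremal value is attained on the negative real axis, hence $\omega_{2}=-\lambda_{\min}=-\mu_{n}$, with no need to locate $\mu_{1}$. To salvage your route you must supply an actual proof of the displayed inequality for all $n$ from the two defining equations of Proposition \ref{Haykoul117}; alternatively, adopt the symmetrization argument, which is where the real content of the proposition lies.
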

\begin{proof}
 First, notice that where $ \alpha=0$, then
$$\mathscr{R}e(S^{\ast}(\phi_{-\alpha}))=\dfrac{1}{2}
\left(
\begin{array}{ccccc}
0     & 1   &  0   & 0    & \dots\\
1     & 0   &  1   & 0    &   \dots               \\
0     & 1   &  0   & 1    &\dots          \\
0     &  0  &   1  & 0    &\dots         \\
\dots &\dots&\dots & \dots &\dots

\end{array}
\right)
.$$
In this case the eigenvalues are $\cos\dfrac{k\pi}{n+1}$, for $k=1,\dots,n$. For the proof there are many references, 
we refer the reader for example to \cite{Szego} page 67 or \cite{Bottcher} page 35, 
therefore $$\omega_{2}(\mathscr{R}e(S^{\ast}(\phi_{-\alpha})))=\cos\dfrac{\pi}{n+1}.$$ Hence using Remark \ref{bsissa}, we 
observe that the Proposition \ref{akouda} is 
satisfied in the particular case where $ \alpha=0$.
Then we can limit our study to the case $\alpha\neq 0 $. Notice that 
$$\mathscr{R}e(S^{\ast}(\phi_{-\alpha}))=\dfrac{1-\alpha^{2}}{2\alpha}
\left(
\begin{array}{cccc}
-\dfrac{2\alpha^{2}}{1-\alpha^{2}}&\alpha&\cdots&\alpha^{n-1}\\
\alpha&\ddots&\ddots &  \vdots                \\
\vdots&  \ddots               &  \ddots                      &\alpha          \\
\alpha^{n-1}&  \cdots               &   \alpha                     &   -\dfrac{2\alpha^{2}}{1-\alpha^{2}}     
\end{array}
\right).
$$
Here $\mathscr{R}e(S^{\ast}(\phi_{-\alpha}))$ is the Toeplitz matrix associated with the Toeplitz form: 
$$\dfrac{1-\alpha^{2}}{2\alpha}\Big(P_{\alpha}(e^{it})-\dfrac{1+\alpha^{2}}{1-\alpha^{2}}\Big)=
\dfrac{(1+\alpha^{2})\cos t-2\alpha}{1-2\alpha \cos t+\alpha^{2}}=h(t).$$ 

Let $v=(v_{0},v_{1},\cdots v_{n-1})$ an unit vector in $\mathbb{C}^{n}$ where the 
numerical radius of $\mathscr{R}e(S^{\ast}(\phi_{-\alpha}))$ is attained. That is 
$<\mathscr{R}e(S^{\ast}(\phi_{-\alpha}))v,v>\lambda$ and 
$\omega_{2}(\mathscr{R}e(S^{\ast}(\phi_{-\alpha})))=|\lambda|$. Thus
\begin{eqnarray*}
 <\mathscr{R}e(S^{\ast}(\phi_{-\alpha}))v,v>&=&\int_{-\pi}^{\pi}\dfrac{1-\alpha^{2}}{2\alpha}(P_{\alpha}(e^{it})-\dfrac{1+\alpha^{2}}{1-\alpha^{2}})\Big|\sum_{l=0}^{n-1}v_{l}e^{ilt}\Big|^{2}\frac{dt}{2\pi}\\
&=&\int_{-\pi}^{\pi}\dfrac{1-\alpha^{2}}{2\alpha}(P_{\alpha}(e^{it})-\dfrac{1+\alpha^{2}}{1-\alpha^{2}})\Big|\sum_{l=0}^{n-1}\overline{v_{l}}e^{ilt}\Big|^{2}\frac{dt}{2\pi}\\
&=&<\mathscr{R}e(S^{\ast}(\phi_{-\alpha}))\overline{v},\overline{v}>
\end{eqnarray*}
where $\overline{v}=(\overline{v_{0}},\overline{v_{1}},\cdots \overline{v_{n-1}})$. According to Corollary \ref{ter2} (\ref{jj}), $\lambda$ is a simple eigenvalue of
$\mathscr{R}e(S^{\ast}(\phi_{-\alpha}))$ then there exists a real $\gamma$ such that $v=e^{i\gamma}\overline{v}$. Hence we may assume, by replacing $v$
by $e^{-i\frac{\gamma}{2}}v$, that $v=\overline{v}$. Then the numerical radius of  $\mathscr{R}e(S^{\ast}(\phi_{-\alpha}))$ 
is attained for a unit vector $v$ with real coefficients. Consequently, we obtain
\begin{eqnarray*}
<\mathscr{R}e(S^{\ast}(\phi_{-\alpha}))v,v>&=&\dfrac{1-\alpha^{2}}{2\alpha}(\sum_{l,m=0}^{n-1}\alpha^{|l-m|}v_{l}v_{m}-\dfrac{1+\alpha^{2}}{1-\alpha^{2}})\\
&=&\dfrac{1-\alpha^{2}}{2\alpha}\Big\{\sum_{0\leq l,m\leq n-1, l-m~\mbox{even}}\alpha^{|l-m|}v_{l}v_{m}-\dfrac{1+\alpha^{2}}{1-\alpha^{2}}\\
&+&\sum_{0\leq l,m\leq n-1, l-m~\mbox{odd}}\alpha^{|l-m|}v_{l}v_{m}\Big\}.
\end{eqnarray*}
Besides, we observe that
\begin{eqnarray*}
 0\leq \sum_{0\leq l,m\leq n-1, l-m~\mbox{even}}\alpha^{|l-m|}v_{l}v_{m}=
\int_{-\pi}^{\pi}P_{\alpha^{2}}(e^{2it})\Big|\sum_{l=0}^{n-1}v_{l}e^{ilt}\Big|^{2}\dfrac{dt}{2\pi}\leq\dfrac{1+\alpha^{2}}{1-\alpha^{2}}.
\end{eqnarray*}
This clearly forces 
\begin{eqnarray*}
|<\mathscr{R}e(S^{\ast}(\phi_{-\alpha}))v,v>|&\leq&\dfrac{1-\alpha^{2}}{2\alpha}\Big\{\dfrac{1+\alpha^{2}}{1-\alpha^{2}}-\sum_{0\leq l,m\leq n-1, l-m~\mbox{even}}\alpha^{|l-m|}v_{l}v_{m}\\
&+&\sum_{0\leq l,m\leq n-1, l-m~\mbox{odd}}\alpha^{|l-m|}|v_{l}||v_{m}|\Big\}.
\end{eqnarray*}
Consider $\widetilde{v}=(\widetilde{v_{0}},\widetilde{v_{1}},\widetilde{v_{2}},\cdots,\widetilde{v_{n-1}})=(|v_{0}|,-|v_{1}|,|v_{2}|,\cdots,(-1)^{n-1}|v_{n-1}|)$. Then 
\begin{eqnarray*}
<\mathscr{R}e(S^{\ast}(\phi_{-\alpha}))\widetilde{v},\widetilde{v}>&=&\dfrac{1-\alpha^{2}}{2\alpha}\Big\{\sum_{0\leq l,m\leq n-1, l-m~\mbox{even}}\alpha^{|l-m|}|v_{l}||v_{m}|-\dfrac{1+\alpha^{2}}{1-\alpha^{2}}\\
&-&\sum_{0\leq l,m\leq n-1, l-m~\mbox{odd}}\alpha^{|l-m|}|v_{l}||v_{m}|\Big\}.
\end{eqnarray*}
and hence
\begin{eqnarray*}
|<\mathscr{R}e(S^{\ast}(\phi_{-\alpha}))\widetilde{v},\widetilde{v}>|&=&\dfrac{1-\alpha^{2}}{2\alpha}\Big\{\dfrac{1+\alpha^{2}}{1-\alpha^{2}}-\sum_{0\leq l,m\leq n-1, l-m~\mbox{even}}\alpha^{|l-m|}|v_{l}||v_{m}|\\
&+&\sum_{0\leq l,m\leq n-1, l-m~\mbox{odd}}\alpha^{|l-m|}|v_{l}||v_{m}|\Big\}\\
&=&\dfrac{1-\alpha^{2}}{2\alpha}\Big\{\dfrac{1+\alpha^{2}}{1-\alpha^{2}}-\sum_{0\leq l,m\leq n-1, l-m~\mbox{even}}\alpha^{|l-m|}\widetilde{v_{l}}\widetilde{v_{m}}\\
&+&\sum_{0\leq l,m\leq n-1, l-m~\mbox{odd}}\alpha^{|l-m|}|\widetilde{v_{l}}||\widetilde{v_{m}}|\Big\}.
\end{eqnarray*}
It follows that the numerical radius of $\mathscr{R}e(S^{\ast}(\phi_{-\alpha}))$ is attained at $\widetilde{v}$ in the negative $x$-axis. Then
$$\omega_{2}(\mathscr{R}e(S^{\ast}(\phi_{-\alpha})))=-\lambda$$  
where $\lambda$ is the infimum of the eigenvalues of $\mathscr{R}e(S^{\ast}(\phi_{-\alpha}))$.
To complete the proof of the proposition, a straightforward argument, based on the fact that if $a$ and $b$ are arbitrary real number
 and $f(x)$ a Toeplitz form with $\gamma_{k}^{n}$ as eigenvalues then the eigenvalues of $a+bf(x)$ will be $a+b\gamma_{k}^{n}$. 
This shows that the eigenvalues of $\mathscr{R}e(S^{\ast}(\phi_{-\alpha}))$ are the $(\lambda_{k}^{(n)})_{1\leq k\leq n}$ with
$$\lambda_{k}^{(n)}=\dfrac{1-\alpha^{2}}{2\alpha}(P_{\alpha}(e^{it_{k}^{(n)}})-\dfrac{1+\alpha^{2}}{1-\alpha^{2}})=
\dfrac{(1+\alpha^{2})\cos t_{k}^{(n)}-2\alpha}{1-2\alpha \cos t_{k}^{(n)}+\alpha^{2}}.$$ Now, since $h(t)$ is monotonic 
on $[0,\pi]$, we may assume that: 
\begin{eqnarray*}
\omega_{2}(\mathscr{R}e(S^{\ast}(\phi_{-\alpha})))=
\frac{-(1+\alpha^{2})\cos t_{n}^{(n)}+2\alpha}{1-2\alpha \cos t_{n}^{(n)}+\alpha^{2}}.
\end{eqnarray*}
This ends the proof.
\end{proof}

\subsection{The first main result}\label{aa}
In view of this last result, is not surprising that there is a a connection the numerical radius of $S(\phi)$ and the eigenvalues of
the KMS matrix. We can now, under Propositions \ref{bb1} and \ref{akouda}, express the numerical radius of the truncated shift $S(\phi)$ where
$\phi$ is a finite Blaschke product with unique zero.
\begin{theorem}\label{Haykoulita}
 \textit{ Let $\phi(z)=\left( \dfrac{z-\alpha}{1-\overline{\alpha}z}\right)^n $ with $\alpha\in\mathbb{C}$ and $\lvert\alpha\lvert<1$.Then
$$\omega_{2}(S(\phi))=\frac{-(1+|\alpha|^{2})\cos t_{n}^{(n)}+2|\alpha|}{1-2|\alpha| \cos t_{n}^{(n)}+|\alpha|^{2}}.$$}
\end{theorem}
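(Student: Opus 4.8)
The plan is to assemble the three preparatory results---Propositions~\ref{nika}, \ref{bb1} and \ref{akouda}---into a single chain, after first reducing the statement for an arbitrary complex zero $\alpha$ to the case of a real nonpositive zero already handled by those propositions. No new estimate is needed; the entire content of the theorem has been discharged upstream, and what remains is to verify that the pieces fit.

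First I would observe that the numerical radius is invariant under passage to the adjoint: since $W(T^{\ast})=\overline{W(T)}$ for every $T\in\mathscr{B}(\mathcal{H})$, the two sets have the same radius, so $\omega_{2}(S(\phi))=\omega_{2}(S^{\ast}(\phi))$. It therefore suffices to compute $\omega_{2}(S^{\ast}(\phi_{\alpha}))$. Next I would invoke Proposition~\ref{nika}: by part~(2) the numerical range of $S^{\ast}(\phi_{\alpha})$ is the range of $S^{\ast}(\phi_{|\alpha|})$ rotated by $e^{-i\arg\alpha}$, so by part~(3) the numerical radius depends only on $|\alpha|$. Because $-|\alpha|$ and $\alpha$ share the same modulus, this yields
$$\omega_{2}(S^{\ast}(\phi_{\alpha}))=\omega_{2}(S^{\ast}(\phi_{|\alpha|}))=\omega_{2}(S^{\ast}(\phi_{-|\alpha|})),$$
which replaces the general complex parameter by the real value $-|\alpha|$ treated in the remaining propositions.

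Then I would apply Proposition~\ref{bb1} with the real parameter $|\alpha|\in[0,1[$ to obtain $\omega_{2}(S^{\ast}(\phi_{-|\alpha|}))=\omega_{2}(\mathscr{R}e(S^{\ast}(\phi_{-|\alpha|})))$, and finally Proposition~\ref{akouda}, with $\alpha$ replaced by $|\alpha|$, to evaluate the right-hand side as
$$\frac{-(1+|\alpha|^{2})\cos t_{n}^{(n)}+2|\alpha|}{1-2|\alpha|\cos t_{n}^{(n)}+|\alpha|^{2}}.$$
Reading the chain of equalities from $\omega_{2}(S(\phi))$ to this expression gives exactly the claimed formula.

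Since every analytic difficulty has already been resolved earlier, the only points that require care here are bookkeeping ones: matching the parameter $\alpha$ of Propositions~\ref{bb1} and \ref{akouda} (where $0\le\alpha<1$) with the modulus $|\alpha|$ of the original complex zero, and checking that the argument-independence of Proposition~\ref{nika} legitimately carries a general $\alpha$ to the real negative value $-|\alpha|$. The genuine obstacle lies entirely in the preceding work---in establishing via Theorem~\ref{hayka} and Proposition~\ref{nika1} that $\partial W(S^{\ast}(\phi_{-\alpha}))$ is a regular arc so that $\omega_{2}$ equals $\sup_{\theta}\lambda(\theta)$, and in the monotonicity argument of Proposition~\ref{bb1} forcing that supremum to be attained at $\theta=0$ or $\theta=\pi$---so the present step is essentially a verification that these ingredients combine to the stated identity.
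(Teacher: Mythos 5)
Your proposal is correct and follows exactly the route the paper intends: the paper gives no separate proof of Theorem~\ref{Haykoulita}, stating only that it follows ``under Propositions~\ref{bb1} and \ref{akouda},'' and your chain --- passing to the adjoint, using Proposition~\ref{nika} to reduce a general complex $\alpha$ to the real value $-|\alpha|$, then applying Propositions~\ref{bb1} and \ref{akouda} --- is precisely that assembly, with the bookkeeping made explicit. No further comment is needed.
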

We conclude this section by giving some illustrating corollaries showing the importance of Theorem \ref{Haykoulita}.
\begin{corollary}
 \textit{ Let $\phi(z)=\left( \dfrac{z-\alpha}{1-\overline{\alpha}z}\right)^2 $ with $\alpha\in \mathbb{C}$ and $|\alpha|<1$. Then
$$\omega_{2}(S(\phi))=\dfrac{1+2|\alpha|-|\alpha|^{2}}{2}.$$}
\end{corollary}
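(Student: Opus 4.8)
The plan is to specialize Theorem \ref{Haykoulita} to $n=2$; the only genuine work is to pin down the abscissa $\cos t_{2}^{(2)}$ in closed form, after which the stated value drops out of a short algebraic simplification. Write $a=|\alpha|$. Invoking Proposition \ref{Haykoul117} with $n=2$ and $k=2$ (which is even), the point $t_{2}^{(2)}$ is the unique zero in $]\frac{\pi}{3},\frac{2\pi}{3}]$ of $\sin\frac{3t}{2}-a\sin\frac{t}{2}$.

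Next I would solve this trigonometric equation explicitly. Using the triple-angle identity $\sin\frac{3t}{2}=\sin\frac{t}{2}\big(3-4\sin^{2}\frac{t}{2}\big)$ and dividing through by $\sin\frac{t}{2}$, which is nonzero on the interval since $\frac{t}{2}\in]\frac{\pi}{6},\frac{\pi}{3}]$, the defining equation collapses to the linear relation $4\sin^{2}\frac{t}{2}=3-a$. The half-angle formula $\cos t=1-2\sin^{2}\frac{t}{2}$ then yields the clean value $\cos t_{2}^{(2)}=\frac{a-1}{2}$. This is negative, consistent with $t_{2}^{(2)}\in]\frac{\pi}{3},\frac{2\pi}{3}]$, and it reduces to $-\frac{1}{2}$, i.e. $t_{2}^{(2)}=\frac{2\pi}{3}$, when $a=0$, in agreement with Remark \ref{bsissa}.

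Finally I would substitute $\cos t_{2}^{(2)}=\frac{a-1}{2}$ into the formula of Theorem \ref{Haykoulita}. The denominator simplifies to $1-2a\cdot\frac{a-1}{2}+a^{2}=1+a$, while the numerator becomes $\frac{1}{2}(-a^{3}+a^{2}+3a+1)$, which factors as $\frac{1}{2}(a+1)(-a^{2}+2a+1)$. Cancelling the common factor $1+a$ leaves $\frac{-a^{2}+2a+1}{2}$, that is $\frac{1+2|\alpha|-|\alpha|^{2}}{2}$, which is exactly the claimed value.

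Since this is a direct corollary, I do not anticipate any serious obstacle; the one step requiring care is the reduction to a solvable equation for $t_{2}^{(2)}$. This is possible precisely because for $n=2$ the relevant factor of $p_{2}$ is a cubic in $\sin\frac{t}{2}$ with the trivial root $\sin\frac{t}{2}=0$ discarded, which leaves a linear equation in $\sin^{2}\frac{t}{2}$ and hence an explicit rational value for $\cos t_{2}^{(2)}$. For general $n$ no such elementary closed form for $t_{n}^{(n)}$ is available, which is why the explicit evaluation is special to small $n$.
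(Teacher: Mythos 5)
Your proposal is correct and follows essentially the same route as the paper: specialize Theorem \ref{Haykoulita} to $n=2$, identify $\cos t_{2}^{(2)}=\frac{|\alpha|-1}{2}$, and simplify. The only cosmetic difference is that you extract this value from the factorization in Proposition \ref{Haykoul117} via the triple-angle identity, whereas the paper solves the quadratic $p_{2}(\cos t)=4\cos^{2}t-4\alpha\cos t+\alpha^{2}-1=0$ directly; both give the same root and the same final simplification.
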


\begin{proof}
We can assume that $0\leqslant\alpha<1$. This result is known, but it is interesting to notice that this result can also be obtained by using Theorem \ref{Haykoulita}.
This follows from the fact that
$p_{2}(\cos t)=4\cos^{2}t-4\alpha\cos t+\alpha^{2}-1$
and 
$\cos t_{2}^{(2)}=\dfrac{\alpha-1}{2}.$ 
\end{proof}

\begin{corollary}
 \textit{ Let $\phi(z)=\left( \dfrac{z-\alpha}{1-\overline{\alpha}z}\right)^3 $ with $\alpha\in \mathbb{C}$ and $|\alpha|<1$. Thus $$\omega_{2}(S(\phi))=\dfrac{7|\alpha|-|\alpha|^{3}+(1+|\alpha|^{2})(|\alpha|^{2}+8)^{\tfrac{1}{2}}}{4+2|\alpha|^{2}
+2|\alpha|(|\alpha|^{2}+8)^{\tfrac{1}{2}}}.$$}
\end{corollary}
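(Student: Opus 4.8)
The plan is to obtain Theorem \ref{Haykoulita} as the direct confluence of the three preceding propositions, so that the entire argument consists of reductions which peel away, one at a time, the inessential features of the general parameter $\alpha$ until we land exactly on the normalised situation resolved in Proposition \ref{akouda}. No new computation should be required beyond lining up the statements.

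First I would pass from $S(\phi)$ to its adjoint. Since $W(T^{\ast})=\overline{W(T)}$ for every bounded operator $T$, the numerical radius is invariant under conjugation, so $\omega_{2}(S(\phi))=\omega_{2}(S^{\ast}(\phi))$, and we may work throughout with $S^{\ast}(\phi_{\alpha})$, for which the explicit matricial and Toeplitz descriptions assembled earlier are at our disposal. Next I would discard the argument of $\alpha$: by Proposition \ref{nika}(3) the numerical radius of $S^{\ast}(\phi_{\alpha})$ depends only on $|\alpha|$, whence $\omega_{2}(S^{\ast}(\phi_{\alpha}))=\omega_{2}(S^{\ast}(\phi_{|\alpha|}))$. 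Because $\bigl|-|\alpha|\bigr|=|\alpha|$, the very same argument-independence yields $\omega_{2}(S^{\ast}(\phi_{|\alpha|}))=\omega_{2}(S^{\ast}(\phi_{-|\alpha|}))$; this last form, with a real nonpositive zero, is precisely the convention under which Propositions \ref{bb1} and \ref{akouda} are phrased.

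With the parameter reduced to $-|\alpha|$, where now $0\leq|\alpha|<1$, the two remaining steps are immediate citations. Proposition \ref{bb1} collapses the numerical radius onto that of the real part, giving $\omega_{2}(S^{\ast}(\phi_{-|\alpha|}))=\omega_{2}(\mathscr{R}e(S^{\ast}(\phi_{-|\alpha|})))$, and Proposition \ref{akouda}, applied with its parameter taken to be the real number $|\alpha|$, evaluates the right-hand side as
$$\omega_{2}(\mathscr{R}e(S^{\ast}(\phi_{-|\alpha|})))=\frac{-(1+|\alpha|^{2})\cos t_{n}^{(n)}+2|\alpha|}{1-2|\alpha|\cos t_{n}^{(n)}+|\alpha|^{2}}.$$
Chaining the four equalities then produces exactly the asserted formula.

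The only genuine delicacy, and the step I would write out most carefully, is the sign-and-argument bookkeeping in the middle reduction: Propositions \ref{bb1} and \ref{akouda} are stated for the Blaschke product $\phi_{-\alpha}$ with $\alpha\geq 0$, the choice that renders the off-diagonal entries of the associated Kac, Murdokh and Szeg\"{o} Toeplitz matrix positive, whereas the theorem concerns $\phi_{\alpha}$ for an arbitrary point of the disc. The precise thing to verify is that the two successive applications of Proposition \ref{nika}(3) legitimately identify $\omega_{2}(S^{\ast}(\phi_{\alpha}))$ with $\omega_{2}(S^{\ast}(\phi_{-|\alpha|}))$; once that identification is secured, everything else is a formal substitution $\alpha\mapsto|\alpha|$ in the closed form of Proposition \ref{akouda}, and the proof is complete.
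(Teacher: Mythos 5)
Your proposal does not prove the stated corollary; it proves (or rather re-proves) Theorem \ref{Haykoulita}, the general identity $\omega_{2}(S(\phi))=\frac{-(1+|\alpha|^{2})\cos t_{n}^{(n)}+2|\alpha|}{1-2|\alpha|\cos t_{n}^{(n)}+|\alpha|^{2}}$. The corollary in question is the case $n=3$, and its entire content is that the implicitly defined quantity $\cos t_{3}^{(3)}$ can be evaluated in closed form, so that the answer no longer involves a root of $p_{3}$. Chaining Propositions \ref{nika}, \ref{bb1} and \ref{akouda} as you do still leaves you with an expression containing $\cos t_{3}^{(3)}$, which is defined only as a zero of $p_{3}(\cos t)$ in a prescribed interval; nothing in your argument converts this into the radical expression $\frac{7|\alpha|-|\alpha|^{3}+(1+|\alpha|^{2})(|\alpha|^{2}+8)^{1/2}}{4+2|\alpha|^{2}+2|\alpha|(|\alpha|^{2}+8)^{1/2}}$ asserted by the corollary.

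The missing step, which is exactly what the paper's proof supplies, is the explicit computation of $\cos t_{3}^{(3)}$. After reducing to $0\leq\alpha<1$, one factors
$p_{3}(\cos t)=\frac{2}{\sin t}\bigl(\sin(2t)-\alpha\sin t\bigr)\bigl(\cos(2t)-\alpha\cos t\bigr)$,
and Proposition \ref{Haykoul117} (with $k=n=3$ odd) identifies $t_{3}^{(3)}$ as the unique solution of $\cos(2t)-\alpha\cos t=0$ in $\bigl]\frac{\pi}{2},\frac{3\pi}{4}\bigr]$. Writing $\cos(2t)=2\cos^{2}t-1$ turns this into the quadratic $2x^{2}-\alpha x-1=0$ in $x=\cos t$, whose root lying in the required interval (hence negative) is $\cos t_{3}^{(3)}=\frac{\alpha-(\alpha^{2}+8)^{1/2}}{4}$. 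Substituting this value into the formula of Theorem \ref{Haykoulita} and clearing denominators yields the stated expression. Without this computation your argument establishes only the general theorem, not the corollary.
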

 
\begin{proof}
For $0\leqslant\alpha<1$ observe that
$$p_{3}(\cos t)=\dfrac{2}{\sin t}\big( \sin(2t)-\alpha\sin t \big) 
\big( \cos(2t)-\alpha \cos t\big). $$
Under the Proposition \ref{Haykoul117}, we know that $\cos t_{3}^{(3)}$ is the unique solution of $\cos(2t)-\alpha \cos t=0$ on 
$]\dfrac{\pi}{2},\dfrac{3\pi}{4}[$. Thus a straightforward calculation shows that
$$\cos t_{3}^{(3)}=\dfrac{\alpha-(\alpha^{2}+8)^{\tfrac{1}{2}}}{4}$$ and the result follows. 
\end{proof}

\begin{corollary}
 \textit{ Let $\phi(z)=\left( \dfrac{z-\alpha}{1-\overline{\alpha}z}\right)^4 $ with $\alpha\in \mathbb{C}$ and $|\alpha|<1$. One has: 
$$\omega_{2}(S(\phi))=\dfrac{-|\alpha|^{3}+|\alpha|^{2}+7|\alpha|+1+
\left(1+|\alpha|^{2}\right)\left(|\alpha|^{2}+2|\alpha|+5\right)^{\tfrac{1}{2}}}{2|\alpha|^{2}+2|\alpha|+4+
2|\alpha|\left(|\alpha|^{2}+2|\alpha|+5\right)^{\tfrac{1}{2}}}.$$}
\end{corollary}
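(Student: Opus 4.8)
The plan is to follow the same route as the $n=2$ and $n=3$ corollaries: invoke Theorem~\ref{Haykoulita}, which reduces everything to computing $\cos t_4^{(4)}$, and then feed the resulting value into the closed formula $\omega_{2}(S(\phi))=\frac{-(1+|\alpha|^{2})\cos t_4^{(4)}+2|\alpha|}{1-2|\alpha|\cos t_4^{(4)}+|\alpha|^{2}}$. Since the right-hand side of Theorem~\ref{Haykoulita} depends only on $|\alpha|$, I may assume throughout that $0\leqslant\alpha<1$ and write $\alpha$ for $|\alpha|$.

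First I would record the factorisation already used for $n=3$, namely
$$p_{4}(\cos t)=\frac{2}{\sin t}\left(\sin\tfrac{5t}{2}-\alpha\sin\tfrac{3t}{2}\right)\left(\cos\tfrac{5t}{2}-\alpha\cos\tfrac{3t}{2}\right).$$
Since $k=4$ is even, Proposition~\ref{Haykoul117} identifies $t_4^{(4)}$ as the unique zero of the sine factor $\sin\tfrac{5t}{2}-\alpha\sin\tfrac{3t}{2}$ lying in $]\tfrac{3\pi}{5},\tfrac{4\pi}{5}]$; in particular $\cos t_4^{(4)}$ is negative and belongs to $[\cos\tfrac{4\pi}{5},\cos\tfrac{3\pi}{5})$.

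The key step is to convert this transcendental equation into a quadratic in $\cos t$. Writing $\theta=t/2$ and dividing $\sin\tfrac{5t}{2}=\alpha\sin\tfrac{3t}{2}$ by $\sin\theta\neq0$, the second-kind Chebyshev identities $\frac{\sin 5\theta}{\sin\theta}=U_{4}(\cos\theta)$ and $\frac{\sin 3\theta}{\sin\theta}=U_{2}(\cos\theta)$ turn the equation into $16y^{4}-12y^{2}+1=\alpha(4y^{2}-1)$ with $y=\cos\tfrac{t}{2}$. Since $y^{2}=\tfrac{1+\cos t}{2}$, this is genuinely a quadratic in $\cos t$; solving it and keeping the root in the admissible interval yields
$$\cos t_4^{(4)}=\tfrac14\left(\alpha-1-\sqrt{\alpha^{2}+2\alpha+5}\right).$$
The discarded root is positive and corresponds to $t_2^{(4)}$ rather than to $t_4^{(4)}$; at $\alpha=0$ the retained value reduces to $\cos\tfrac{4\pi}{5}$, consistent with Remark~\ref{bsissa}.

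It then remains to substitute this value into the formula of Theorem~\ref{Haykoulita} and simplify: clearing the denominators introduced by $\cos t_4^{(4)}$ converts the numerator and denominator into the stated polynomial-plus-radical expressions, and replacing $\alpha$ by $|\alpha|$ gives the claim. I expect no genuine obstacle here. The only point requiring care is the root selection in the quadratic — verifying that the chosen value really is the largest zero $t_4^{(4)}$ and not $t_2^{(4)}$ — while the concluding substitution and simplification are routine algebra.
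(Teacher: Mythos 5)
Your proposal is correct and follows essentially the same route as the paper: both identify $t_{4}^{(4)}$ via Proposition~\ref{Haykoul117} as the unique root of $\sin\tfrac{5t}{2}-\alpha\sin\tfrac{3t}{2}$ in $]\tfrac{3\pi}{5},\tfrac{4\pi}{5}]$, reduce that equation by multiple-angle identities to a quadratic in $\cos t$ (the paper via triple- and double-angle formulas, you via second-kind Chebyshev polynomials --- the same computation), select the root $\cos t_{4}^{(4)}=\tfrac14\bigl(\alpha-1-\sqrt{\alpha^{2}+2\alpha+5}\bigr)$, and substitute into the formula of Theorem~\ref{Haykoulita}. Your additional checks --- that the discarded positive root corresponds to $t_{2}^{(4)}$ and that the retained value reduces to $\cos\tfrac{4\pi}{5}$ at $\alpha=0$ in accordance with Remark~\ref{bsissa} --- are sound and slightly more careful than the paper's own justification, but do not change the argument.
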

 
\begin{proof}
We assume that $\alpha$ is positive. We know that
 $t_{4}^{(4)}$
is the unique solution of $\alpha\sin\dfrac{3t}{2}=\sin\dfrac{5t}{2}$ on the interval $] \tfrac{3\pi}{5},\tfrac{4\pi}{5}]$.
Using the identities $\sin(3x)=3\sin x-4\sin^{3}x, \cos(3x)=-3\cos x+4\cos^{3}x,$and $\cos(2x)=2\cos^{2}x-1,\label{E888}$, 
we obtain
$$\cos(t_{4}^{(4)})=
\dfrac{\alpha+3-(\alpha^{2}+2\alpha+5)^{\tfrac{1}{2}}}{4}-1.$$
The desired equality follows immediately.
\end{proof}
 
\section{Application : A sharpened Schwarz-Pick operatorial inequality for nilpotent operator}\label{dd}
\hspace{0.5cm}For any $\rho>0$, we denote by $C_{\rho}(\mathcal{H})$ the set of all operator $T$ on $\mathscr{B}(\mathcal{H})$ which admit
 a unitary $\rho$-dilation in the sens of Nagy-Foias \cite{Nagy}, \cite{Nagy1}. This means that there exists a Hilbert space 
$\mathcal{K}\supseteq\mathcal{H}$ and an unitary operator $U$ acting on $\mathcal{K}$ such that 
$$T^{n}=\rho ~\mbox{pr}_{\mathcal{H}}(U^{n}).$$
\hspace{0.5cm}It is known that $C_{1}(\mathcal{H})$ consists of all contractions on $\mathcal{H}$, and
that $T\in C_{2}(\mathcal{H})$ if and only if the numerical range of $T$ is contained in the closed unit disc \cite{Berger}.

According to J. Holbrook \cite{Holbrook} and J. Williams \cite{Williams} we define the $\rho$-numerical radius of an operator $T$ in 
$\mathscr{B}(\mathcal{H})$ by the formula $$w_{\rho}(T)=\inf\{1/r : r>0 ~\mbox{et}~ rT\in\mathcal{C}_{\rho}\}.$$
Obviously, an operator $T$ belongs to $C_{\rho}(\mathcal{H})$ if and only if $w_{\rho}(T)\leq1$. Consequently, the operators in 
$C_{\rho}(\mathcal{H})$ are contractions with respect to the $\rho$-numerical radius, and according to this fact, any operator $T\in C_{\rho}(\mathcal{H})$
will be called a $\rho$-contraction on $\mathcal{H}$ (See for instance \cite{Cassier2}, \cite{Suciu2}, \cite{Suciu3}, \cite{Woederman}, \cite{Gaaya3} where we can find recent results about the $\rho$-numerical radius.).
Recall that $w_{1}(T)=\rVert T\rVert$ and $w_{2}(T)$ is the classical numerical radius of $T$. In \cite{Suciu1}, G. Cassier and
N. Suciu proved the following sharpened von Neumann inequality.
\begin{theorem}[\cite{Suciu1}]\label{roumain}
 \textit{Let $f$ be a non-constant analytic self map of the unit disc $\mathbb{D}$, and let  $\alpha\in\mathbb{D}$ and $m$ 
be the order of multiplicity of the zero $\alpha$ for the function $f-f(\alpha)$. Then for every operator $T\in\mathscr{B}(\mathcal{H})$ 
with $w_{\rho}(T)<1$ for some $\rho>0$, we have 
$$w_{\rho(\alpha)}\left[\Big(f(\alpha)I-f(T)\Big)\Big(I-\overline{f(\alpha)}f(T)\Big)^{-1}\right]
\leq \Bigg(w_{\rho(\alpha)}\left[\Big(\alpha I-T\Big)\Big(I-\overline{\alpha}T\Big)^{-1}\right]\Bigg)^{m},$$
where
$$
\rho(\alpha) = \left\{
    \begin{array}{ll}
        1+(\rho-1)\dfrac{1-|\alpha|}{1+|\alpha|} & \mbox{if } \rho\leq1 \\\\
        1+(\rho-1)\dfrac{1+|\alpha|}{1-|\alpha|} & \mbox{if } \rho\geq1
    \end{array}
\right.
.$$}
\end{theorem}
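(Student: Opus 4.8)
The plan is to peel the operator inequality down to the scalar Schwarz lemma via the holomorphic functional calculus, and then to feed the resulting factorization into two structural facts about the $\rho$-dilation classes: their behaviour under disc automorphisms and their stability under analytic self-maps of $\mathbb{D}$ fixing the origin. Throughout I write $b_{\beta}(z)=\frac{\beta-z}{1-\overline{\beta}z}$ for the involutive automorphism of $\mathbb{D}$ that swaps $0$ and $\beta$, so that the two operators in the statement are exactly $b_{f(\alpha)}(f(T))=(f(\alpha)I-f(T))(I-\overline{f(\alpha)}f(T))^{-1}$ and $A:=b_{\alpha}(T)=(\alpha I-T)(I-\overline{\alpha}T)^{-1}$. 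A first observation is that $w_{\rho}(T)<1$ forces the spectral radius to satisfy $r(T)\le w_{\rho}(T)<1$, because $sT\in C_{\rho}$ implies $\sigma(sT)\subseteq\overline{\mathbb{D}}$; hence $\sigma(T)\subset\mathbb{D}$ and every function below may legitimately be applied to $T$, with $I-\overline{f(\alpha)}f(T)$ and $I-\overline{\alpha}T$ invertible.

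Next I would form the self-map $G=b_{f(\alpha)}\circ f\circ b_{\alpha}$ of $\mathbb{D}$. Since $b_{\alpha}$ and $b_{f(\alpha)}$ are local biholomorphisms carrying $0\mapsto\alpha$ and $f(\alpha)\mapsto 0$, and since $f-f(\alpha)$ vanishes to order exactly $m$ at $\alpha$, the map $G$ is an analytic self-map of $\mathbb{D}$ with a zero of order $m$ at the origin. The higher-order Schwarz lemma then produces an analytic $k:\mathbb{D}\to\overline{\mathbb{D}}$ with $\|k\|_{\infty}\le 1$ and $G(z)=z^{m}k(z)$. Composing on the right with $b_{\alpha}$ and using $b_{\alpha}\circ b_{\alpha}=\mathrm{id}$ yields the scalar identity
$$b_{f(\alpha)}(f(z))=\big(b_{\alpha}(z)\big)^{m}\,k\big(b_{\alpha}(z)\big),\qquad z\in\mathbb{D},$$
which, transported by the functional calculus (valid since $\sigma(T)\subset\mathbb{D}$), becomes the operator identity
$$\big(f(\alpha)I-f(T)\big)\big(I-\overline{f(\alpha)}f(T)\big)^{-1}=A^{m}\,k(A).$$

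It then suffices to establish the single-operator estimate $w_{\rho(\alpha)}\big(A^{m}k(A)\big)\le\big(w_{\rho(\alpha)}(A)\big)^{m}$. Here I invoke two inputs. The first is a transformation lemma for $\rho$-dilations: $T\in C_{\rho}$ implies $b_{\alpha}(T)\in C_{\rho(\alpha)}$, with $\rho(\alpha)$ given by the stated two-case formula; in particular $r:=w_{\rho(\alpha)}(A)\le 1$. The second is the stability theorem of Sz.-Nagy and Foias, that the class $C_{\sigma}$ is preserved by analytic self-maps of $\mathbb{D}$ fixing the origin. Assuming the nontrivial case $0<r\le1$, the operator $A/r$ lies in $C_{\rho(\alpha)}$, and $\Psi(z)=z^{m}k(rz)$ is a genuine analytic self-map of $\mathbb{D}$ with $\Psi(0)=0$ (well defined precisely because $r\le1$, and bounded by $|\Psi(z)|\le|z|^{m}\le1$). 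The stability theorem gives $\Psi(A/r)\in C_{\rho(\alpha)}$, i.e. $w_{\rho(\alpha)}\big(\Psi(A/r)\big)\le1$; since $\Psi(A/r)=r^{-m}A^{m}k(A)$, this reads $w_{\rho(\alpha)}\big(A^{m}k(A)\big)\le r^{m}$, and together with the operator identity this is exactly the asserted inequality (the case $r=0$ is trivial, as then $A=0$ and both sides vanish because $m\ge1$).

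I expect the transformation lemma to be the genuine obstacle, and it is also where the specific function $\rho(\alpha)$ is forced. Its role is twofold: it guarantees $r\le1$, without which the self-map $\Psi(z)=z^{m}k(rz)$ is not even defined on $\mathbb{D}$, and it pins down the sharp parameter. Establishing the exact dependence $\rho\mapsto\rho(\alpha)$ requires tracking how the positive-definiteness condition characterizing membership in $C_{\rho}$ is distorted by the automorphism $b_{\alpha}$; the split between $\rho\le1$ and $\rho\ge1$ reflects the two opposite directions in which the Poisson-type weight controlling $C_{\rho}$ is stretched or compressed by $b_{\alpha}$ near the point $\alpha$. By contrast, the Schwarz factorization, the functional-calculus transfer, and the final scaling are routine once $\sigma(T)\subset\mathbb{D}$ and the two structural lemmas are in hand.
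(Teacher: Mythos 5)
First, a caveat: the paper contains no proof of Theorem \ref{roumain}; it is quoted verbatim from Cassier and Suciu \cite{Suciu1}, so there is no internal argument to compare yours against. Judged on its own terms, your reduction is the right one and in fact reproduces the architecture of the published proof: writing $b_{\beta}(z)=(\beta-z)/(1-\overline{\beta}z)$, you pass to $G=b_{f(\alpha)}\circ f\circ b_{\alpha}$, factor $G(z)=z^{m}k(z)$ with $\|k\|_{\infty}\leq1$ by the iterated Schwarz lemma, transfer the identity to operators by the Riesz--Dunford calculus (legitimate, since $w_{\rho}(T)<1$ does force $r(T)<1$ and hence the invertibility of $I-\overline{\alpha}T$ and $I-\overline{f(\alpha)}f(T)$), and then combine a M\"obius mapping lemma for the classes $C_{\rho}$ with the Sz.-Nagy--Foias stability of $C_{\sigma}$ under analytic self-maps of $\mathbb{D}$ fixing the origin. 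The final homogeneity and scaling steps are correct, including the degenerate case $w_{\rho(\alpha)}(A)=0$.

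The genuine gap is the one you flag yourself and then set aside: the transformation lemma asserting that $T\in C_{\rho}$ implies $(\alpha I-T)(I-\overline{\alpha}T)^{-1}\in C_{\rho(\alpha)}$ with the \emph{specific} two-case value of $\rho(\alpha)$. This is not a routine auxiliary fact; it carries the entire quantitative content of the theorem, since it is exactly what pins down the class in which the left-hand side is measured, and it is the only place the formula $\rho(\alpha)=1+(\rho-1)\frac{1\mp|\alpha|}{1\pm|\alpha|}$ can come from. Your remark about ``tracking how the positive-definiteness condition is distorted by $b_{\alpha}$'' points at the right mechanism, but it is a gesture, not a derivation: one must actually work with the defining positivity characterization of $C_{\rho}$ and prove a sharp two-sided Harnack-type bound on how the relevant weight transforms under $b_{\alpha}$, which is where the dichotomy $\rho\leq1$ versus $\rho\geq1$ and the exact coefficients arise. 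Without that computation your argument shows only that the inequality holds for \emph{some} class $C_{\sigma}$ containing $b_{\alpha}(T)$, not for the stated $\rho(\alpha)$. As written, the proposal is therefore a correct and well-organized outline resting on an unproved key lemma --- itself a separate theorem of Cassier and Suciu (cf. \cite{Suciu2}) --- rather than a complete proof.
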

In particular, notice that for $\rho=1+\dfrac{1-|\alpha|}{1+|\alpha|} \geq1$ we have $\rho(\alpha)=2$. According to Theorem \ref{roumain}, 
it becomes easily to obtain the following Corollary
\begin{corollary}\label{haykaletta}
 \textit{ Let $\alpha\in\mathbb{D}$ and $T\in\mathcal{C}_{\rho}$ with $\rho=1+\dfrac{1-|\alpha|}{1+|\alpha|}$.
Let $f$ be a non-constant analytic self map of the unit disc $\mathbb{D}$ and $m$ be the order of multiplicity 
of the zero $\alpha$ for the function $f-f(\alpha)$. Then
$$w_{2}\left[\Big(f(\alpha)I-f(T)\Big)\Big(I-\overline{f(\alpha)}f(T)\Big)^{-1}\right]
\leq\Bigg( w_{2}\left[\Big(\alpha I-T\Big)\Big(I-\overline{\alpha}T\Big)^{-1}\right]\Bigg)^{m}.$$}
\end{corollary}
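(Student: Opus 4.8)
The plan is to derive Corollary~\ref{haykaletta} directly from Theorem~\ref{roumain} by specializing the parameter $\rho$. First I would choose $\rho = 1+\dfrac{1-|\alpha|}{1+|\alpha|}$. Since $\dfrac{1-|\alpha|}{1+|\alpha|}>0$ for $\alpha\in\mathbb{D}$, this value satisfies $\rho\geq1$, so I am in the second branch of the definition of $\rho(\alpha)$ in Theorem~\ref{roumain}. Substituting into that branch gives
\begin{equation*}
\rho(\alpha) = 1+(\rho-1)\dfrac{1+|\alpha|}{1-|\alpha|}
= 1+\dfrac{1-|\alpha|}{1+|\alpha|}\cdot\dfrac{1+|\alpha|}{1-|\alpha|}
= 1+1 = 2,
\end{equation*}
which is the observation recorded just before the statement. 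Hence for this specific $\rho$ the weight $\rho(\alpha)$ appearing on both sides of the conclusion of Theorem~\ref{roumain} collapses to $2$.

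Next I would verify the hypotheses of Theorem~\ref{roumain} for this setup. The assumption $T\in\mathcal{C}_{\rho}$ means exactly that $w_{\rho}(T)\leq1$; since the theorem requires the strict inequality $w_{\rho}(T)<1$, I would either note that the statement of the corollary is to be read under $T\in\mathcal{C}_{\rho}$ together with the strict condition inherited from the theorem, or invoke a routine continuity/limiting argument letting $rT\to T$ with $r\uparrow1$ to pass from the strict to the closed case. The function $f$ is assumed to be a non-constant analytic self map of $\mathbb{D}$ and $m$ is the order of the zero of $f-f(\alpha)$ at $\alpha$, which matches verbatim the hypotheses on $f$ and $m$ in Theorem~\ref{roumain}. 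With $\rho(\alpha)=2$ now established, I would simply read off the conclusion of Theorem~\ref{roumain}, replacing every occurrence of $w_{\rho(\alpha)}$ by $w_{2}$, obtaining
\begin{equation*}
w_{2}\!\left[\Big(f(\alpha)I-f(T)\Big)\Big(I-\overline{f(\alpha)}f(T)\Big)^{-1}\right]
\leq\Bigg(w_{2}\!\left[\Big(\alpha I-T\Big)\Big(I-\overline{\alpha}T\Big)^{-1}\right]\Bigg)^{m},
\end{equation*}
which is precisely the asserted inequality.

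This corollary is essentially a direct specialization, so there is no deep obstacle; the only point requiring a little care is the passage between the strict hypothesis $w_{\rho}(T)<1$ of Theorem~\ref{roumain} and the membership $T\in\mathcal{C}_{\rho}$ (i.e.\ $w_{\rho}(T)\leq1$) stated in the corollary. I expect the main technical step, if one wants full rigor, to be justifying that the inequality survives the limit $r\uparrow1$: one applies Theorem~\ref{roumain} to $rT$, which has $w_{\rho}(rT)=r\,w_{\rho}(T)<1$, and then lets $r\to1$, using that both sides are continuous in $r$ because the resolvent-type expressions $(I-\overline{\alpha}\,rT)^{-1}$ and $(I-\overline{f(\alpha)}f(rT))^{-1}$ and the $\rho$-numerical radius depend continuously on the operator in norm near $T$. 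Apart from this limiting remark, the proof amounts to the single arithmetic identity $\rho(\alpha)=2$ for the chosen $\rho$, so I would keep the write-up short, stating the substitution, the computation of $\rho(\alpha)$, and the immediate invocation of Theorem~\ref{roumain}.
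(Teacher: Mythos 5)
Your argument is essentially identical to the paper's: the author likewise obtains the corollary by choosing $\rho=1+\frac{1-|\alpha|}{1+|\alpha|}$, observing that the second branch of the definition gives $\rho(\alpha)=2$, and reading off the conclusion of Theorem~\ref{roumain}. Your extra remark about reconciling the strict hypothesis $w_{\rho}(T)<1$ with the membership $T\in\mathcal{C}_{\rho}$ (i.e.\ $w_{\rho}(T)\leq 1$) via a limiting argument addresses a point the paper passes over in silence, and is a welcome refinement rather than a deviation.
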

The next result, which is the main goal of this paper, give a Schwarz-Pick operatorial inequality for nilpotent operators.
\begin{theorem}\label{habibi}
 \textit{Let $T\in\mathscr{B}(\mathcal{H})$ be a nilpotent contraction satisfying $T^{n}=0$ and $\alpha\in\mathbb{D}$. Let
 $f\in\mathbb{A}(\mathbb{D})$ be a non-constant analytic self map of the unit disc $\mathbb{D}$ and $m$ be the order of multiplicity 
of the zero $\alpha$ for the function $f-f(\alpha)$. Then
$$w_{2}\left[\Big(f(\alpha)I-f(T)\Big)\Big(I-\overline{f(\alpha)}f(T)\Big)^{-1}\right]
\leq \left(\frac{-(1+|\alpha^{2}|)\cos t_{n}^{(n)}+2|\alpha|}{1-2|\alpha| \cos t_{n}^{(n)}+|\alpha|^{2}}\right)^{m}.$$}
\end{theorem}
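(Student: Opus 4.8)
The target inequality has exactly the shape of Corollary \ref{haykaletta}, except that the right-hand side has been replaced by a concrete closed-form expression raised to the power $m$. So the whole strategy reduces to two moves: first invoke the corollary to bound the left-hand side by $\big(w_{2}[(\alpha I-T)(I-\overline{\alpha}T)^{-1}]\big)^{m}$, and second show that the inner $\rho=2$ numerical radius of the operator Möbius transform $(\alpha I-T)(I-\overline{\alpha}T)^{-1}$ is dominated by the scalar quantity $\frac{-(1+|\alpha|^{2})\cos t_{n}^{(n)}+2|\alpha|}{1-2|\alpha|\cos t_{n}^{(n)}+|\alpha|^{2}}$, which by Theorem \ref{Haykoulita} is precisely $\omega_{2}(S(\phi_{\alpha}))$.

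First I would set up the hypotheses of the corollary. Since $T^{n}=0$, the operator $T$ is a nilpotent contraction, hence in particular $w_{2}(T)\le 1$; I must check that $T\in C_{\rho}(\mathcal{H})$ for $\rho=1+\frac{1-|\alpha|}{1+|\alpha|}$, so that Corollary \ref{haykaletta} applies. Because $\rho\ge 1$ here and $T$ is a genuine contraction (indeed nilpotent), membership in the larger class $C_{\rho}$ follows from the nesting $C_{1}(\mathcal{H})\subseteq C_{\rho}(\mathcal{H})$ for $\rho\ge 1$. With the hypotheses verified, the corollary immediately yields
$$
w_{2}\!\left[\Big(f(\alpha)I-f(T)\Big)\Big(I-\overline{f(\alpha)}f(T)\Big)^{-1}\right]
\le \Bigg(w_{2}\!\left[\Big(\alpha I-T\Big)\Big(I-\overline{\alpha}T\Big)^{-1}\right]\Bigg)^{m}.
$$

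The crux is therefore to bound the reduced factor $w_{2}\big[(\alpha I-T)(I-\overline{\alpha}T)^{-1}\big]$. Let $b_{\alpha}(z)=\frac{\alpha-z}{1-\overline{\alpha}z}$ be the elementary Blaschke factor; then the operator in question is $b_{\alpha}(T)$, and I want to show $w_{2}(b_{\alpha}(T))\le \omega_{2}(S(\phi_{\alpha}))$ where $\phi_{\alpha}=b_{\alpha}^{n}$. This is exactly the situation governed by the constrained von Neumann inequality, Theorem \ref{catalin}: since $T$ satisfies $u_{n}(T)=0$ with $u_{n}(z)=z^{n}$ (equivalently $T$ is annihilated by the inner function $z^{n}$), and $b_{\alpha}\in\mathbb{A}(\mathbb{D})$, one obtains $w_{\rho}(b_{\alpha}(T))\le w_{\rho}(b_{\alpha}(S(z^{n})))=w_{\rho}(b_{\alpha}(S_{n}))$ for the model operator on $H(z^{n})$. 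The remaining identification is that $b_{\alpha}(S_{n})$ is unitarily equivalent to (a scalar multiple, in fact equal in numerical radius to) $S(\phi_{\alpha})^{\ast}$ — this is the content made explicit in Proposition \ref{nika}(1), where $S^{\ast}(\phi_{\alpha})=(S_{n}^{\ast}+\overline{\alpha}I_{n})(I_{n}+\alpha S_{n}^{\ast})^{-1}$ is exactly a Möbius transform of the Jordan block. Taking $\rho=2$ and using that $w_{2}$ is invariant under adjoint and unitary equivalence, this gives $w_{2}(b_{\alpha}(T))\le \omega_{2}(S(\phi_{\alpha}))$.

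The main obstacle, and the step deserving the most care, is the bookkeeping that links the reduced factor $b_{\alpha}(T)$ to $S(\phi_{\alpha})$ through the correct $\rho$: Corollary \ref{haykaletta} delivers a $\rho=2$ numerical-radius bound on the \emph{outer} composite operator, while the inner factor must itself be controlled at the level $w_{2}$ in order to match Theorem \ref{Haykoulita}, and one must ensure these two applications of the $\rho$-machinery are compatible (that is, that the constrained von Neumann inequality is being applied with $\rho=2$ to the operator $b_{\alpha}(T)$, whose numerical range then sits inside that of the model operator). Once the chain $w_{2}(b_{\alpha}(T))\le w_{2}(b_{\alpha}(S_{n}))=\omega_{2}(S(\phi_{\alpha}))$ is pinned down and substituted into the bound from Corollary \ref{haykaletta}, substituting the explicit value from Theorem \ref{Haykoulita} and raising to the power $m$ finishes the proof.
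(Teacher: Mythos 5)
Your proposal is correct and rests on exactly the same three ingredients as the paper's proof --- Theorem \ref{catalin}, Corollary \ref{haykaletta}, Proposition \ref{nika}(1) together with Theorem \ref{Haykoulita} --- but you compose them in the opposite order. The paper first applies the constrained von Neumann inequality of Theorem \ref{catalin} to the full composite symbol $\bigl(f(\alpha)-f(z)\bigr)\bigl(1-\overline{f(\alpha)}f(z)\bigr)^{-1}\in\mathbb{A}(\mathbb{D})$, transferring the estimate from $T$ to the model operator $S^{\ast}_{n}$, and only then invokes the Schwarz--Pick Corollary \ref{haykaletta} for $S^{\ast}_{n}$; you instead apply Corollary \ref{haykaletta} directly to $T$ and then use Theorem \ref{catalin} only for the single Blaschke factor $b_{\alpha}$, reducing $w_{2}(b_{\alpha}(T))$ to $w_{2}(b_{\alpha}(S_{n}))=\omega_{2}(S(\phi_{\alpha}))$. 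Both routes are valid; yours has the mild advantage that the constrained von Neumann inequality is only needed for the elementary function $b_{\alpha}$, while the paper's order settles everything in the finite-dimensional model at the very first step. The small identification you gloss over (that $b_{\alpha}(S_{n})^{\ast}=-S^{\ast}(\phi_{-\alpha})$, whose numerical radius equals $\omega_{2}(S(\phi_{\alpha}))$ by Proposition \ref{nika}(2)--(3)) is glossed over to exactly the same degree in the paper's own proof, so it is not a gap specific to your argument.
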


\begin{proof}
Let $T\in\mathscr{B}(\mathcal{H})$ such that $T^{n}=0$ for some $n \geq 2$. Under Theorem \ref{catalin}, 
Corollary \ref{haykaletta} and Theorem \ref{nika} on obtain successively
\begin{eqnarray}
& &w_{2}\left[\Big(f(\alpha)I-f(T)\Big)\Big(I-\overline{f(\alpha)}f(T)\Big)^{-1}\right]\nonumber\\
&\leq& w_{2}\left[\Big(f(\alpha)I-f(S^{\ast}_{n})\Big)\Big(I-\overline{f(\alpha)}f(S^{\ast}_{n})\Big)^{-1}\right]\\
&\leq& w_{2}\left[\Big(\alpha I-S^{\ast}_{n}\Big)\Big(I-\overline{\alpha}S^{\ast}_{n}\Big)^{-1}\right]^{m}\\
&=&\big(w_{2}(S^{\ast}(\phi))\big)^{m}
\end{eqnarray}
with $ \phi(z)=\left(\dfrac{z-\alpha}{1-\overline{\alpha}z}\right)^{n}.$ This allows to establish the desired inequality.
\end{proof}
\begin{remark}
\textit{ As mentioned in the introduction, the Haagerup and 
de la Harpe Theorem \ref{Haa} is the special case $f(z)=z$ and $\alpha=0$ of Theorem \ref{habibi}.}
\end{remark}

\section{An estimate of the numerical radius of $S(\phi)$ where $\phi$ is a finite Blaschke product}\label{ee}
In this section we give an estimate of the numerical radius of $S(\phi)$ in the general case where $\phi$ is a finite Blaschke
product with different zeros. To state it, we need some results established by N. Nikolski and V. Vasyunin \cite{Nikolski}.
\begin{definition}
\textit{ Let $\varphi$ be an inner function and let $\mu_{\varphi}=\mu_{s}+\mu_{B}$, where $\mu_{s}$ is the singular measure
associated to the singular part of $\varphi$ and $\mu_{B}$ the measure defined by 
$$d\mu_{B}(\xi)=\frac{1}{2}\sum_{\lambda\in\varphi^{-1}(0)}k_{\varphi}(\xi)(1-|\xi|^{2})d\delta_{\lambda}(\xi).$$ Here 
$k_{\varphi}(\xi)$
denotes the order of multiplicity of the zero $\xi$ for the function $\varphi$ (with the understanding that $k_{\varphi}(\xi)=0$
if $\xi$ is not zero of $\varphi$). We say that $\mu_{\varphi}$ is the representing measure of $\varphi$.
}
\end{definition}  
 
\begin{theorem}[\cite{Nikolski}]\label{angle}
\textit{For $i=1,2$, let $\varphi_{i}$ be inner functions with the representing measure $\mu_{i}, i=1,2$. Then}
\begin{eqnarray*}
\sin\left<H(\varphi_{1}),H(\varphi_{2})\right>&\geq&\exp\left\{4
\int_{\overline{\mathbb{D}}}\int_{\overline{\mathbb{D}}}
\dfrac{\log\left|\dfrac{\zeta-\xi}{1-\overline{\zeta}\xi}\right|}
{(1-|\zeta|^{2})(1-|\xi|^{2})}d\mu_{1}(\zeta)d\mu_{2}(\xi)\right\}\\
&=&F(\varphi_{1},\varphi_{2}).
\end{eqnarray*}
\end{theorem}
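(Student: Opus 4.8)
The plan is to identify $\langle H(\varphi_1),H(\varphi_2)\rangle$ with the minimal (Friedrichs) angle, for which $\cos\langle H(\varphi_1),H(\varphi_2)\rangle=\|P_{H(\varphi_1)}P_{H(\varphi_2)}\|$ and
$$\sin\langle H(\varphi_1),H(\varphi_2)\rangle=\inf\{\|(I-P_{H(\varphi_2)})f\|:f\in H(\varphi_1),\ \|f\|=1\},$$
and then to exploit that $I-P_{H(\varphi_2)}=P_{\varphi_2 H^2}$, so that the right-hand side is $\inf\|P_{\varphi_2H^2}f\|=\inf\|P_+(\overline{\varphi_2}f)\|$, $P_+$ denoting the Riesz projection onto $H^2$. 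First I would prove the inequality for finite Blaschke products $\varphi_1=\prod_{j=1}^p b_{a_j}$, $\varphi_2=\prod_{k=1}^q b_{c_k}$, $b_\lambda(z)=\frac{z-\lambda}{1-\overline\lambda z}$, and then recover the general inner case by approximating the singular part of each $\varphi_i$ by Blaschke subproducts $\varphi_i^{(N)}\mid\varphi_i$, so that $H(\varphi_i^{(N)})\uparrow H(\varphi_i)$; monotonicity of the minimal angle under increasing subspaces then makes the limit one-sided and clean. When $\varphi_1,\varphi_2$ share a zero both sides vanish, so I may assume them coprime.

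The \emph{heart} of the matter is a Gram--determinant computation. For simple zeros $H(\varphi_1)=\mathrm{span}\{k_{a_j}\}$ and $H(\varphi_2)=\mathrm{span}\{k_{c_k}\}$ with $k_\lambda(z)=(1-\overline\lambda z)^{-1}$. I would invoke the linear-algebra identity that the product of the squared sines of all principal angles between finite-dimensional subspaces $E,F$ is the Gram ratio
$$\prod_i\sin^2\theta_i=\frac{\det G_{E\cup F}}{\det G_E\,\det G_F},$$
together with the elementary bound $\sin^2\langle E,F\rangle=\min_i\sin^2\theta_i\ge\prod_i\sin^2\theta_i$, each factor being $\le1$. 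Since $\langle k_{a_i},k_{a_j}\rangle=(1-\overline{a_i}a_j)^{-1}$, all three Gram matrices are Cauchy matrices; applying the Cauchy determinant formula, the $a$--$a$ and $c$--$c$ blocks cancel between numerator and denominator, leaving
$$\frac{\det G_{E\cup F}}{\det G_E\,\det G_F}=\prod_{j,k}\left|\frac{a_j-c_k}{1-\overline{a_j}c_k}\right|^2=\prod_{j,k}|b_{a_j}(c_k)|^2.$$
Hence $\sin\langle H(\varphi_1),H(\varphi_2)\rangle\ge\prod_{j,k}|b_{a_j}(c_k)|$, and the rank-one case (where $\sin=|b_a(c)|$) shows the estimate is sharp.

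It then remains to match this product with $F(\varphi_1,\varphi_2)$. The Blaschke representing measure $\mu_i$ places an atom of mass $\tfrac12\,m(1-|\lambda|^2)$ at each zero $\lambda$ of multiplicity $m$, whence
$$4\int_{\overline{\mathbb D}}\int_{\overline{\mathbb D}}\frac{\log|b_\zeta(\xi)|}{(1-|\zeta|^2)(1-|\xi|^2)}\,d\mu_1(\zeta)\,d\mu_2(\xi)=\sum_{j,k}m_j m_k'\log|b_{a_j}(c_k)|,$$
so that $F(\varphi_1,\varphi_2)=\prod_{j,k}|b_{a_j}(c_k)|^{m_jm_k'}$, exactly the exponent structure produced by the multiplicities. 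Repeated zeros are absorbed by replacing the kernels $k_\lambda$ by their divided differences and using the confluent Cauchy determinant, which reproduces precisely these exponents.

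The finite-Blaschke step is essentially algebraic and clean; \textbf{the main obstacle is the passage to a general inner function with nontrivial singular part}. In the monotone scheme above the subspaces increase to $H(\varphi_i)$, so $\sin\langle H(\varphi_1^{(N)}),H(\varphi_2^{(N)})\rangle$ decreases to $\sin\langle H(\varphi_1),H(\varphi_2)\rangle$ by density of $\bigcup_N H(\varphi_i^{(N)})$ and continuity of $f\mapsto\mathrm{dist}(f,H(\varphi_2))$; the delicate point is to arrange the approximating Blaschke zeros so that the representing measures increase, $\mu_i^{(N)}\uparrow\mu_i$, and then to justify
$$\int\!\!\int\bigl(-\log|b_\zeta(\xi)|\bigr)\,d\mu_1^{(N)}\,d\mu_2^{(N)}\ \longrightarrow\ \int\!\!\int\bigl(-\log|b_\zeta(\xi)|\bigr)\,d\mu_1\,d\mu_2$$
by monotone convergence, the integrand being nonnegative but logarithmically singular along the boundary diagonal. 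Controlling this singularity under the weak-$*$ accumulation of zeros toward $\mathrm{supp}\,\mu_s$, so that the termwise inequality $\sin^{(N)}\ge F^{(N)}$ survives in the limit, is the technical crux; the algebraic content of the theorem, by contrast, is entirely contained in the Cauchy-determinant collapse of the second paragraph.
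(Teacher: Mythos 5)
First, a point of reference: the paper does not prove Theorem~\ref{angle} at all --- it is imported verbatim from \cite{Nikolski} (a result of Nikolski--Vasyunin) and used as a black box in Section~\ref{ee}. So your attempt cannot be matched against an internal argument, and must be judged on its own. Its algebraic core is sound: the Gram-ratio identity $\det G_{E\cup F}=\det G_{E}\,\det G_{F}\,\prod_{i}\sin^{2}\theta_{i}$, the bound $\min_{i}\sin^{2}\theta_{i}\geq\prod_{i}\sin^{2}\theta_{i}$, and the Cauchy-determinant collapse are all correct, and they do reproduce the sharp rank-one value $\sin\langle H(b_{a}),H(b_{c})\rangle=|b_{a}(c)|$. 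Your bookkeeping of the atoms of $\mu_{B}$ (mass $\tfrac12 m(1-|\lambda|^{2})$, against the factor $4$) giving $F=\prod_{j,k}|b_{a_{j}}(c_{k})|^{m_{j}m_{k}'}$ is also right as computed from the paper's definitions; note in passing that it exposes a factor-of-two inconsistency in the paper's own displayed special case $F(\varphi_{1},\varphi_{2})=|b_{\alpha_{1}}(\alpha_{2})|^{2n_{1}n_{2}}$, which is the square of what the stated $\mu_{B}$ and the constant $4$ actually produce (harmless for the inequality, since $|b|^{2n_{1}n_{2}}\leq|b|^{n_{1}n_{2}}$, but it means one of the two displays in the paper carries a typo). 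Your nested-limit scheme is also fine as far as Blaschke products go, including infinite ones: partial products $B_{N}$ divide $B$, $H(B_{N})$ increases with dense union in $H(B)$, the sines converge, and monotone convergence applies to the nonnegative integrand.

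The genuine gap is exactly where you located it, but it is worse than a ``technical crux'': the proposed mechanism cannot work. A purely singular inner function has no zeros, hence no nonconstant Blaschke \emph{subproducts}; every inner divisor of $S_{\mu_{s}}$ is again singular. So the family $\varphi_{i}^{(N)}\mid\varphi_{i}$ with $\varphi_{i}^{(N)}$ Blaschke exhausts only $H(B_{i})$, the model space of the Blaschke factor, and $\bigcup_{N}H(\varphi_{i}^{(N)})$ is never dense in $H(\varphi_{i})$ when $\mu_{s}\neq0$; in particular $\mu_{i}^{(N)}\uparrow\mu_{i}$ is impossible, since the atomic measures $\mu_{i}^{(N)}$ (supported in $\mathbb{D}$) and $\mu_{s}$ (supported on the circle) are mutually singular, and ``increasing'' would force $\mu_{i}^{(N)}\leq\mu_{s}$. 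Any honest approximation --- Frostman shifts $(\varphi-w)(1-\overline{w}\varphi)^{-1}$, or zeros marching toward $\operatorname{supp}\mu_{s}$ with weak-$\ast$ convergence of measures --- destroys the nesting, and with it both monotonicity and continuity of the minimal angle. Moreover, even the measure side then points the wrong way: the kernel $-\log|b_{\zeta}(\xi)|/\bigl((1-|\zeta|^{2})(1-|\xi|^{2})\bigr)$, extended to $\overline{\mathbb{D}}\times\overline{\mathbb{D}}$ by its boundary limit $\tfrac12|1-\overline{\zeta}\xi|^{-2}$, is nonnegative and lower semicontinuous, so under weak-$\ast$ convergence one only gets $\liminf_{N}\iint\geq\iint$, i.e.\ $\limsup_{N}F^{(N)}\leq F$ --- which does not allow $\sin^{(N)}\geq F^{(N)}$ to pass to the limit as $\sin\geq F$. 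So your argument proves the theorem for (finite or infinite) Blaschke products, but the singular part requires a genuinely different mechanism, and that is precisely where the cited Nikolski--Vasyunin proof does its real work.
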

 Note that in the special case where $\varphi_{i}$, i=1,2 are Blaschke products with unique zero defined by
 defined by $\phi_{i}=\left( \dfrac{z-\alpha_{i}}{1-\overline{\alpha_{i}}z}\right)^{n_{i}}$, we have
$$F(\varphi_{1},\varphi_{2})=\left|\dfrac{\alpha_{1}-\alpha_{2}}
{1-\overline{\alpha_{1}}\alpha_{2}}\right|^{2n_{1}n_{2}}.$$
In the sequel of this manuscript, we will denote by
 $$\phi_{i}=\left( \dfrac{z-\alpha_{i}}{1-\overline{\alpha_{i}}z}\right)^{n_{i}}$$
 for $1\leq i\leq p$ , $\delta=\max\{w_{2}(S^{\ast}(\phi_{i})), 1\leq i \leq p\}$ 
and $\rho=\max\{\cos\theta_{i,j}, 1\leq i<j \leq p\}$ where
 $\theta_{ij}$ denote the angle between the model subspaces $H(\phi_{i})$ and $H(\phi_{j})$.

\begin{theorem}\label{tunis}
\textit{ Let  $\phi=\prod_{i=1}^{p}\phi_{i}$ with $p\geq2$. If $\rho<\dfrac{1-\delta}{2(p-1)}$, then
$$ w_{2}(S(\phi))\leq\dfrac{\delta+\rho(p-1)}{1-\rho(p-1)}=G(\rho,\delta).$$}
\end{theorem}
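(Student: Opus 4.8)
The plan is to exploit the decomposition of the model space $H(\phi)$ induced by the factorization $\phi=\prod_{i=1}^{p}\phi_{i}$. Since the $\phi_{i}$ are Blaschke products with distinct zeros they are pairwise coprime, so $H(\phi_{i})\cap H(\phi_{j})=\{0\}$ for $i\neq j$; moreover each $H(\phi_{i})\subseteq H(\phi)$ because $\phi\,\H2\subseteq\phi_{i}\,\H2$, and a dimension count $\dim H(\phi)=\sum_{i}n_{i}=\sum_{i}\dim H(\phi_{i})$ shows that $H(\phi)=\sum_{i=1}^{p}H(\phi_{i})$ as a (non-orthogonal) algebraic direct sum. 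I would then fix a unit vector $x\in H(\phi)$, write $x=\sum_{i=1}^{p}x_{i}$ with $x_{i}\in H(\phi_{i})$, and set $a_{i}=\|x_{i}\|$. Expanding bilinearly gives $\langle S(\phi)x,x\rangle=\sum_{i,j}\langle S(\phi)x_{i},x_{j}\rangle$ and $\|x\|^{2}=\sum_{i,j}\langle x_{i},x_{j}\rangle$, and the heart of the argument is to estimate the diagonal and off-diagonal contributions separately.

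For the diagonal terms I would use that, since $x_{i}\in H(\phi)\cap H(\phi_{i})$, both projections $P_{H(\phi)}$ and $P_{H(\phi_{i})}$ act trivially against $x_{i}$, so $\langle S(\phi)x_{i},x_{i}\rangle=\langle zx_{i},x_{i}\rangle=\langle S(\phi_{i})x_{i},x_{i}\rangle$; hence $|\langle S(\phi)x_{i},x_{i}\rangle|\le w_{2}(S(\phi_{i}))\,a_{i}^{2}\le\delta\,a_{i}^{2}$, using $w_{2}(S(\phi_{i}))=w_{2}(S^{\ast}(\phi_{i}))\le\delta$. The off-diagonal terms are where the angles enter. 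For $i\neq j$, since $x_{j}\in H(\phi)$ one has $\langle S(\phi)x_{i},x_{j}\rangle=\langle zx_{i},x_{j}\rangle=\langle x_{i},\bbs x_{j}\rangle$; because $H(\phi_{j})$ is invariant under the backward shift $\bbs$, the vector $\bbs x_{j}$ lies in $H(\phi_{j})$, so by the very definition of the angle $\theta_{ij}$ between $H(\phi_{i})$ and $H(\phi_{j})$ we get $|\langle x_{i},\bbs x_{j}\rangle|\le\cos\theta_{ij}\,\|x_{i}\|\,\|\bbs x_{j}\|\le\rho\,a_{i}a_{j}$. The same angle bound applied directly to the Gram cross terms gives $|\langle x_{i},x_{j}\rangle|\le\rho\,a_{i}a_{j}$ for $i\neq j$.

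To assemble, set $A=\sum_{i}a_{i}^{2}$ and $B=\sum_{i\neq j}a_{i}a_{j}=\big(\sum_{i}a_{i}\big)^{2}-A$. The previous estimates yield $|\langle S(\phi)x,x\rangle|\le\delta A+\rho B$ and $\|x\|^{2}\ge A-\rho B$. The elementary Cauchy--Schwarz inequality $\big(\sum_{i}a_{i}\big)^{2}\le p\sum_{i}a_{i}^{2}$ gives $B\le(p-1)A$, so $|\langle S(\phi)x,x\rangle|\le(\delta+\rho(p-1))A$ and $\|x\|^{2}\ge(1-\rho(p-1))A$. The hypothesis $\rho<\frac{1-\delta}{2(p-1)}$ forces $\rho(p-1)<\frac12$, hence $1-\rho(p-1)>0$; with $\|x\|=1$ this bounds $A\le(1-\rho(p-1))^{-1}$, and combining the two inequalities gives $|\langle S(\phi)x,x\rangle|\le\frac{\delta+\rho(p-1)}{1-\rho(p-1)}=G(\rho,\delta)$. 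Taking the supremum over unit vectors $x$ finishes the proof.

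The main obstacle is the off-diagonal estimate: the naive bound $|\langle zx_{i},x_{j}\rangle|\le a_{i}a_{j}$ is useless, and the whole scheme hinges on rewriting this cross term, via the adjoint identity and the backward-shift invariance of $H(\phi_{j})$, as an inner product of two vectors genuinely lying in $H(\phi_{i})$ and $H(\phi_{j})$, so that the geometric angle $\theta_{ij}$—and thus $\rho$—can be brought to bear. A secondary technical point is the justification of the non-orthogonal decomposition $H(\phi)=\sum_{i}H(\phi_{i})$ together with the bookkeeping of its non-orthogonality when lower-bounding $\|x\|^{2}$; here the hypothesis on $\rho$ is precisely what keeps the Gram form uniformly positive (and, equivalently, what makes $G(\rho,\delta)<1$). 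Finally, I would note that Theorem~\ref{angle} is what renders the bound effective, since it supplies the explicit upper estimate on $\rho=\max_{i<j}\cos\theta_{ij}$ in terms of the pseudohyperbolic distances between the zeros $\alpha_{i}$.
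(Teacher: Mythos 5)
Your proposal is correct and follows essentially the same route as the paper: the non-orthogonal decomposition $f=\sum_i f_i$ with $f_i\in H(\phi_i)$, the diagonal terms bounded by $\delta\|f_i\|^2$, the cross terms bounded by $\rho\|f_i\|\,\|f_j\|$ via the angle between the model subspaces, and the lower bound $\|f\|^2\geq(1-\rho(p-1))\sum_i\|f_i\|^2$ to control the non-orthogonality. The only cosmetic difference is that you replace the paper's two auxiliary lemmas (the identity $\sum_{i<j}(x_i+x_j)=(p-1)\sum_k x_k$ and the eigenvalues of the matrix $\delta I+\rho(J-I)$) by the direct inequality $\sum_{i\neq j}a_ia_j\leq(p-1)\sum_i a_i^2$, which amounts to the same estimate.
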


In the beginning, we need the following lemmas which are easily verified. The proofs are left for 
the reader.

\begin{lemma}\label{haykoula}
 \textit{Let $(x_{n})$ a sequence of real numbers. Then for any integer $p\geq 2$ we have
$$\sum_{1\leq i < j \leq p}(x_{i}+x_{j})=(p-1)\sum_{1\leq k\leq p} x_{k}$$}
\end{lemma}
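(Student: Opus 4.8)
The plan is to prove the combinatorial identity
$$\sum_{1\leq i<j\leq p}(x_i+x_j)=(p-1)\sum_{1\leq k\leq p}x_k$$
by a straightforward counting argument that tracks how many times each $x_k$ appears on the left-hand side. The key observation is that the left-hand sum ranges over all unordered pairs $\{i,j\}$ with $i<j$, and each such pair contributes exactly one copy of $x_i$ and one copy of $x_j$. So the total coefficient of a fixed index $x_k$ on the left is the number of pairs $\{i,j\}$ in which $k$ appears as one of the two members.

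First I would fix an arbitrary index $k\in\{1,\dots,p\}$ and count the pairs containing $k$. Such a pair is determined by choosing the other member from the remaining $p-1$ indices, so there are exactly $p-1$ pairs $\{i,j\}$ with $k\in\{i,j\}$, regardless of the value of $k$. Hence every $x_k$ appears with coefficient $p-1$ on the left-hand side. Interchanging the order of summation then gives
$$\sum_{1\leq i<j\leq p}(x_i+x_j)=\sum_{k=1}^{p}(p-1)x_k=(p-1)\sum_{k=1}^{p}x_k,$$
which is the desired identity.

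An equally clean alternative, which I would mention as a remark, is an induction on $p$: the base case $p=2$ reads $x_1+x_2=(2-1)(x_1+x_2)$, and passing from $p$ to $p+1$ amounts to splitting off the pairs that involve the new index $p+1$, contributing $\sum_{i=1}^{p}(x_i+x_{p+1})=\sum_{i=1}^{p}x_i+p\,x_{p+1}$, and adding this to the inductive hypothesis $(p-1)\sum_{k=1}^{p}x_k$; a short rearrangement collapses the result to $p\sum_{k=1}^{p+1}x_k$. Since the statement is purely formal and holds for an arbitrary real sequence, there is no genuine analytic obstacle here. The only point requiring a little care is the bookkeeping of the coefficient count, namely verifying that the number of pairs through a fixed index is independent of the index; the counting argument above settles this immediately, so I expect the double-counting route to be both the shortest and the most transparent.
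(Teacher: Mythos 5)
Your double-counting argument is correct and complete: each index $k$ lies in exactly $p-1$ of the pairs $\{i,j\}$ with $i<j$, so $x_k$ receives coefficient $p-1$, which yields the identity at once (and your induction sketch is also sound). The paper itself gives no proof of this lemma --- it states that the proofs of Lemma~\ref{haykoula} and Lemma~\ref{haykoula ya hnine} ``are left for the reader'' --- so your argument simply supplies the intended elementary verification, and the counting route is indeed the shortest way to do it.
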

\begin{lemma}\label{haykoula ya hnine}
\textit{ Let $n\geq2$ and consider $B$ the $n\times n$ matrix defined by} $$B=
\left(
\begin{array}{ccccc}
0& 1&1                  & \dots&1   \\
   1         & 0 & 1&\cdots                    &       1\\
1                    &1                   & 0&\cdots               &    1    \\
   
\cdots&\cdots&\cdots&\cdots&  \\
1                   &       1            &      1 &\cdots            &0
\end{array}
\right)
.$$
\textit{ $B$ have two eigenvalues -1 and $n-1$. Here $n-1$ is a simple eigenvalue.}
\end{lemma}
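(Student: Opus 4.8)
The plan is to recognize that $B = J - I_{n}$, where $J$ denotes the $n\times n$ matrix all of whose entries equal $1$ and $I_{n}$ is the identity. Since adding a scalar multiple of the identity shifts every eigenvalue by that same scalar while leaving the eigenvectors (and hence the multiplicities) unchanged, it suffices to determine the spectrum of $J$ and then subtract $1$ from each eigenvalue.

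First I would analyze $J$. Writing $\mathbf{1}=(1,\dots,1)^{T}\in\mathbb{C}^{n}$, one checks directly that $J=\mathbf{1}\,\mathbf{1}^{T}$, so $J$ is symmetric of rank one. From $J\mathbf{1}=n\,\mathbf{1}$ we see that $n$ is an eigenvalue of $J$ with eigenvector $\mathbf{1}$. Because $J$ has rank one, its kernel has dimension $n-1$; explicitly, $Jx=0$ exactly when the coordinates of $x$ sum to zero, so $0$ is an eigenvalue of $J$ of multiplicity $n-1$. These multiplicities add up to $n$, so they exhaust the spectrum of $J$, and in particular $n$ is a simple eigenvalue.

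Finally I would pass back to $B$. From $B=J-I_{n}$ it follows that $\lambda$ is an eigenvalue of $B$ with a given eigenvector if and only if $\lambda+1$ is an eigenvalue of $J$ with the same eigenvector. Hence the spectrum of $B$ is $\{\,n-1,\,-1\,\}$, where $n-1$ comes from the eigenvalue $n$ of $J$ and is therefore simple, while $-1$ comes from the eigenvalue $0$ of $J$ and thus has multiplicity $n-1$. There is no genuine obstacle in this argument; the only point deserving care is the claim that the two eigenspaces of $J$ together span $\mathbb{C}^{n}$, which is precisely the rank--nullity count carried out above and is what guarantees that $n-1$ (equivalently the eigenvalue $n$ of $J$) is simple.
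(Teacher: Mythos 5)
Your proof is correct and complete. Note that the paper itself offers no proof of this lemma: the author states that the lemmas are ``easily verified'' and leaves the proofs to the reader (the proof environment following the lemma in the source is actually the proof of Theorem~\ref{tunis}). Your argument --- writing $B=J-I_{n}$ with $J=\mathbf{1}\,\mathbf{1}^{T}$ the all-ones matrix, reading off the eigenvalue $n$ of the rank-one matrix $J$ from $J\mathbf{1}=n\mathbf{1}$, obtaining the eigenvalue $0$ with multiplicity $n-1$ from rank--nullity, and shifting the spectrum by $-1$ --- is precisely the standard verification the author had in mind, and your explicit attention to why the eigenvalue $n-1$ of $B$ is simple (the multiplicities $1$ and $n-1$ already exhaust $\mathbb{C}^{n}$) supplies exactly the point the lemma asserts and the paper uses in bounding $w_{2}(A)$ in the proof of Theorem~\ref{tunis}.
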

\begin{proof}
Let $f\in H(\phi)$. There exists
$f_{1},f_{2},\dots,f_{p}$ belonging respectively in\\ $ H(\phi_{1}),H(\phi_{2}),\dots,H(\phi_{p})$ such that
$f=\sum_{i=1}^{p} f_{i}$. Therefore
\begin{eqnarray*}
 < S^{\ast}(\phi)f,f>&=&\Big(\sum_{k=1}^{p}\rVert f_{k}\rVert^{2}\Big)\Bigg\{\sum_{i=1}^{p}
\dfrac{\rVert f_{i}\rVert^{2}}{\sum_{k=1}^{p}\rVert f_{k}\rVert^{2}
}<\dfrac{S^{\ast}(\phi_{i})f_{i}}{\rVert f_{i}\rVert},\dfrac{f_{i}}{\rVert f_{i}\rVert}> \\
&+& \sum_{1\leq i\neq j \leq p}
\dfrac{\rVert S^{\ast}(\phi_{i})f_{i}\rVert\rVert f_{j}\rVert}{\sum_{k=1}^{p}\rVert f_{k}\rVert^{2}
}<\dfrac{S^{\ast}(\phi_{i})f_{i}}{\rVert S^{\ast}(\phi_{i})f_{i}\rVert},\dfrac{f_{j}}{\rVert f_{j}\rVert}>\Bigg\}.
\end{eqnarray*}
Which implies that 
\begin{eqnarray*}
|< S^{\ast}(\phi)f,f>|&\leq& \Big(\sum_{k=1}^{p}\rVert f_{k}\rVert^{2}\Big)\Bigg\{\sum_{i=1}^{p}
\dfrac{\rVert f_{i}\rVert^{2}}{\sum_{k=1}^{p}\rVert f_{k}\rVert^{2}
}w_{2}(S^{\ast}(\phi_{i})) \\
&+& \sum_{1\leq i\neq j \leq p}
\dfrac{\rVert f_{i}\rVert\rVert f_{j}\rVert}{\sum_{k=1}^{p}\rVert f_{k}\rVert^{2}
}\cos \theta_{ij}\Bigg\}.
\end{eqnarray*}
Hence
\begin{eqnarray*}
|< S^{\ast}(\phi)f,f>|&\leq& \Big(\sum_{k=1}^{p}\rVert f_{k}\rVert^{2}\Big)\Bigg\{\sum_{i=1}^{p}
\dfrac{\rVert f_{i}\rVert^{2}}{\sum_{k=1}^{p}\rVert f_{k}\rVert^{2}
}\delta\\
&+& \sum_{1\leq i\neq j \leq p}
\dfrac{\rVert f_{i}\rVert\rVert f_{j}\rVert}{\sum_{k=1}^{p}\rVert f_{k}\rVert^{2}
}\rho\Bigg\}.
\end{eqnarray*}
That is $$|< S^{\ast}(\phi)f,f>|\leq \Big(\sum_{k=1}^{p}\rVert f_{k}\rVert^{2}\Big)<AX,X>$$
with $$A=
\left(
\begin{array}{ccccc}
\delta& \rho&\rho                  & \dots&\rho   \\
   \rho         & \delta & \rho&\cdots                    &       \rho\\
\rho                    &\rho                   & \delta&\cdots               &    \rho    \\
   
\cdots&\cdots&\cdots&\cdots&  \\
\rho                   &       \rho             &      \rho &\cdots            &\delta
\end{array}
\right)
$$ and $X$ the unit vector defined by $$X=\dfrac{1}{\big(\sum_{k=1}^{p}\rVert f_{k}\rVert^{2}\big)^{\frac{1}{2}}}  
 \left( \begin{array}{c}
\rVert f_{1}\rVert \\
\rVert f_{2}\rVert\\
\vdots \\
\vdots \\
\rVert f_{p}\rVert
\end{array} \right).$$
Note that 
\begin{eqnarray}
 \sum_{k=1}^{p}\rVert f_{k}\rVert^{2}&=&1-\sum_{1\leq i\neq j \leq p}<f_{i},f_{j}>\nonumber\\
&\leq& 1+2\sum_{1\leq i<j \leq p}\rVert f_{i}\rVert\rVert f_{j}\rVert \cos\theta_{i,j}\nonumber\\
&\leq& 1+\rho\sum_{1\leq i<j \leq p}2\rVert f_{i}\rVert\rVert f_{j}\rVert,\nonumber\\
&\leq& 1+\rho\sum_{1\leq i<j \leq p}\big(\rVert f_{i}\rVert^{2}+\rVert f_{j}\rVert^{2}\big)\nonumber\\
&=&1+(p-1)\rho\sum_{k=1}^{p}\rVert f_{k}\rVert^{2}\label{E26}.
\end{eqnarray}
Here the equality (\ref{E26}) is due to Lemma \ref{haykoula}. Now, since $$\rho<\dfrac{1-\delta}{2(p-1)}<\dfrac{1}{p-1},$$
then $$\sum_{k=1}^{p}\rVert f_{k}\rVert^{2}\leq\dfrac{1}{1-\rho(p-1)}.$$ So, under Lemma \ref{haykoula ya hnine}
\begin{eqnarray*}
 w_{2}(S^{\ast}(\phi))&\leq&\dfrac{1}{1-\rho(p-1)}w_{2}(A)\\
&=&\dfrac{\delta+\rho(p-1)}{1-\rho(p-1)}<1.
\end{eqnarray*}
\end{proof}
\begin{remark}
 \textit{The estimate in Theorem \ref{tunis} is optimal when $\rho$ is small enough. In such case $G(\rho,\delta)$ tends to $\delta$
 which is completely natural because $S^{\ast}(\phi)$ tends  to the orthogonal sum of the $S^{\ast}(\phi_{i})$ for $1\leq i\leq p$.
Consequently the numerical range tends to become the convex hull of the numerical range of the $S^{\ast}(\phi_{i})$ for $1\leq i\leq p$.}
\end{remark}

We close this section by deriving an estimate of the numerical radius of $S(\phi)$ in the case where $\phi$ is finite Blaschke product with 
two different zeros.
\begin{corollary}
 \textit{ Let $\phi_{i}=\left( \dfrac{z-\alpha_{i}}{1-\overline{\alpha_{i}}z}\right)^{n_{i}}$ for $i=1,2$ with
$\alpha_{i}\in\mathbb{C}$ and $|\alpha_{i}|<1$. Let $\phi=\phi_{1}\phi_{2}$ and
 $\delta=\max\{w_{2}(S^{\ast}(\phi_{i})),i=1,2\}$. 
If $$\left(1-\left|\dfrac{\alpha_{1}-\alpha_{2}}{1-\overline{\alpha_{1}}\alpha_{2}}\right|^{2n_{1}n_{2}}\right)^{\frac{1}{2}}
<\dfrac{1-\delta}{2},$$ then
$$ w_{2}(S(\phi))\leq\dfrac{\delta+\left(1-\left|\dfrac{\alpha_{1}-\alpha_{2}}
{1-\overline{\alpha_{1}}\alpha_{2}}\right|^{2n_{1}n_{2}}\right)^{\frac{1}{2}}}{1-\left(1-\left|\dfrac{\alpha_{1}-\alpha_{2}}
{1-\overline{\alpha_{1}}\alpha_{2}}\right|^{2n_{1}n_{2}}\right)^{\frac{1}{2}}}.$$}
\end{corollary}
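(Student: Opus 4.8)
The plan is to obtain this corollary as the instance $p=2$ of \thmref{tunis}, after inserting the explicit value of the angle between the two model subspaces that \thmref{angle} provides. First I would specialize \thmref{tunis} to $p=2$. With only one pair of indices present, the quantity $\rho=\max\{\cos\theta_{i,j}\}$ reduces to $\rho=\cos\theta_{1,2}$ and $p-1=1$; hence the hypothesis $\rho<\frac{1-\delta}{2(p-1)}$ reads $\cos\theta_{1,2}<\frac{1-\delta}{2}$, and the conclusion $w_{2}(S(\phi))\leq\frac{\delta+\rho(p-1)}{1-\rho(p-1)}$ reads $w_{2}(S(\phi))\leq\frac{\delta+\cos\theta_{1,2}}{1-\cos\theta_{1,2}}$.

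The second step evaluates $\cos\theta_{1,2}$ through \thmref{angle}. Since $\phi_{1}$ and $\phi_{2}$ are Blaschke products each with a single zero, their representing measures are point masses supported at $\alpha_{1}$ and $\alpha_{2}$, with weights proportional to $n_{i}(1-|\alpha_{i}|^{2})$. The double integral in \thmref{angle} therefore collapses to a single evaluation of the kernel at $(\alpha_{1},\alpha_{2})$, producing a closed form in terms of the pseudohyperbolic distance $\left|\frac{\alpha_{1}-\alpha_{2}}{1-\overline{\alpha_{1}}\alpha_{2}}\right|$; thus \thmref{angle} furnishes a lower bound for $\sin\theta_{1,2}$. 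Passing through the identity $\cos^{2}\theta_{1,2}=1-\sin^{2}\theta_{1,2}$ converts this into the upper bound
$$\cos\theta_{1,2}\leq\left(1-\left|\frac{\alpha_{1}-\alpha_{2}}{1-\overline{\alpha_{1}}\alpha_{2}}\right|^{2n_{1}n_{2}}\right)^{\frac{1}{2}}=:\beta.$$

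Finally, I would combine the two steps. The monotonicity of $g(\rho)=\frac{\delta+\rho}{1-\rho}$ on $[0,1)$, which is immediate from $g'(\rho)=\frac{1+\delta}{(1-\rho)^{2}}>0$, shows that replacing $\cos\theta_{1,2}$ by its upper bound $\beta$ can only enlarge the right-hand side. Hence the standing hypothesis $\beta<\frac{1-\delta}{2}$ of the corollary guarantees $\cos\theta_{1,2}\leq\beta<\frac{1-\delta}{2}$, so \thmref{tunis} applies, and $w_{2}(S(\phi))\leq g(\cos\theta_{1,2})\leq g(\beta)=\frac{\delta+\beta}{1-\beta}$, which is exactly the claimed estimate. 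I expect the only delicate point to be the second step: one must evaluate the representing-measure integral with the correct multiplicities and keep careful track of the exponent while converting the lower bound on $\sin\theta_{1,2}$ into the upper bound on $\cos\theta_{1,2}$. Once $\beta$ is in hand, the remainder is a mechanical substitution into \thmref{tunis}.
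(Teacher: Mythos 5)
Your proposal follows exactly the route the paper takes: the paper's proof of this corollary is the single sentence that it is an ``immediate consequence'' of Theorem~\ref{tunis} (with $p=2$) and of Theorem~\ref{angle}, together with the formula $F(\phi_{1},\phi_{2})=\bigl|\tfrac{\alpha_{1}-\alpha_{2}}{1-\overline{\alpha_{1}}\alpha_{2}}\bigr|^{2n_{1}n_{2}}$ recorded just after Theorem~\ref{angle}. Your specialization of Theorem~\ref{tunis} (so that $\rho=\cos\theta_{1,2}$ and $p-1=1$) and your monotonicity argument for $g(\rho)=\tfrac{\delta+\rho}{1-\rho}$ are both correct and are precisely what the author leaves implicit.

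The one step you yourself flag as delicate is, however, the one that does not go through as you wrote it. Theorem~\ref{angle} as stated bounds $\sin\langle H(\phi_{1}),H(\phi_{2})\rangle$ from below by $F:=\bigl|\tfrac{\alpha_{1}-\alpha_{2}}{1-\overline{\alpha_{1}}\alpha_{2}}\bigr|^{2n_{1}n_{2}}$; the identity $\cos^{2}\theta_{1,2}=1-\sin^{2}\theta_{1,2}$ then yields only $\cos\theta_{1,2}\le(1-F^{2})^{1/2}$, i.e.\ an exponent $4n_{1}n_{2}$ inside the square root, which is a \emph{weaker} bound than the $\beta=(1-F)^{1/2}$ you assert and that the corollary requires (indeed $1-F\le 1-F^{2}$). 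The stated bound $\cos\theta_{1,2}\le(1-F)^{1/2}$ needs the stronger input $\sin^{2}\theta_{1,2}\ge F$, that is, it needs the Nikolski--Vasyunin estimate to be read as a lower bound on the \emph{square} of the sine (which the factor $4$ in the exponential of Theorem~\ref{angle} suggests is the intended normalization). This discrepancy is inherited from the paper --- the corollary cannot be deduced from Theorem~\ref{angle} exactly as printed either --- so your proposal reproduces the paper's argument, gap included; but as a self-contained derivation, the passage from the lower bound on $\sin\theta_{1,2}$ to the upper bound $\beta$ on $\cos\theta_{1,2}$ is not justified and should either invoke the $\sin^{2}$ form of the theorem or accept the weaker exponent $4n_{1}n_{2}$.
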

\begin{proof}
 The proof is an immediate consequence of the previous theorem and Theorem \ref{angle} of N. Nikolski and V. Vasyunin on angles 
between the model subspaces.
\end{proof}

\end{document}